\title{Generically stable and smooth measures in NIP theories}
\date{Feb. 23, 2010}
\author{Ehud Hrushovski\thanks{Supported by ISF grant 1048/07}\\Hebrew University of Jerusalem\and Anand Pillay\thanks{Supported
by a Marie Curie Chair EXC 024052 and EPSRC grant EP/F009712/1}\\University of Leeds \and Pierre Simon\\ENS and Univ. Paris-Sud 11}
\newtheorem{Theorem}{Theorem}[section]
\newtheorem{Proposition}[Theorem]{Proposition}
\newtheorem{Definition}[Theorem]{Definition} 
\newtheorem{Remark}[Theorem]{Remark}
\newtheorem{Lemma}[Theorem]{Lemma}
\newtheorem{Corollary}[Theorem]{Corollary}
\newtheorem{Fact}[Theorem]{Fact}
\newcommand{\R}{\mathbb R}   
\newcommand{\Q}{\mathbb Q}  
  \newcommand{\Zz}{\mathbb Z}
  \newcommand{\Nn}{\mathbb N}
\def\union{\cup}
\begin{document}
\maketitle

\begin{abstract} 
We formulate the measure analogue of generically stable types in first order theories with $NIP$ (without the independence property), giving several characterizations, answering some questions from \cite{NIPII}, and giving another treatment of uniqueness 
results from \cite{NIPII}. 
We introduce a notion of ``generic compact domination", relating it to stationarity of Keisler measures, and also giving group versions. We also prove the ``approximate definability" of arbitrary Borel probability measures on definable sets in the real and $p$-adic fields.
\end{abstract}

\section{Introduction and preliminaries}

In this introduction we will discuss background and motivation, describe and summarize our main results, and then recall some essential definitions and
prior results. A familiarity with the earlier papers \cite{NIPI} and \cite{NIPII}  would  be advantageous, but we will try to make the bulk  of the paper accessible to a wider audience, even though we are somewhat terse. Even in the introduction we may make some rather advanced comments or references, and the general reader should feel free to ignore these at least on the first reading. 

Shelah defined a formula $\phi(x,y)$ to have the {\em independence property} if there exist arbitrarily large (finite) sets
$A$ such that {\em any} subset $B$ of $A$ has the form $\{a \in A: \phi(a,b) \}$, for some parameter $b$.  A theory has
$NIP$ if no formula has the independence property.  An equivalent definition in a combinatorial / probabilistic rather than logical setting was found by Cervonenkis and Vapnik \cite{Vapnik-Chervonenkis}.  
$o$-minimal and $p$-minimal theories are notable examples.   

A general theme in this paper is
``stable-like" behaviour in theories with $NIP$.
One of the main points  is to develop the theory of ``generically stable measures" in $NIP$ theories, in analogy with generically stable types.
A ``generically stable type" is a global type (namely a complete type over a saturated model) which looks very much like a type in a stable theory, for example it is both definable over and finitely satisfiable in some small model $M$. The theory, at least in the $NIP$ context was developed in \cite{Shelah783}, \cite{NIPII} and \cite{Usvyatsov}. Among the consequences (or even equivalences) of generic stability of a type $p$, assuming $T$ has $NIP$, are {\em nonforking symmetry} (or the total indiscernibility of any ``Morley sequence" in $p$), as well as {\em stationarity}, in the sense that 
$p$ is the {\em unique} global nonforking extension of its restriction to $M$.

In the theory of algebraically closed valued fields generically stable types coincide with stably dominated types and play a major role in the structural analysis of definable sets \cite{HHM2} as well as in a model-theoretic approach to Berkovich spaces \cite{Hrushovski-Loeser}.
However in  $o$-minimal theories and $p$-adically closed fields for example, there are 
{\em no} (nonalgebraic) generically stable types. 

On the other hand, what we have called Keisler measures (introduced in Keisler's seminal paper \cite{Keisler1}), are the natural generalization of complete types to finitely additive $[0,1]$ valued measures on Boolean algebras of definable sets.   Keisler showed (in slightly different terms) that in a NIP theory,  for any Keisler measure $\mu$ on a model $M$
any formula $\phi(x,y)$ and any $\epsilon>0$, there exist finitely many formulas $\phi(x,b_i)$ such that 
for any $b$, $\mu( \phi(x,b_i) \triangle \phi(x,b)) < \epsilon$ for some $i$.  To see this, take a maximal set $\{b_i\}$
such that $\mu(\phi(x,b_i) \triangle \phi(x,b_j)) \geq \epsilon/2$ for $i \neq j$.  If this set is finite, we are done.
If it is infinite, by compactness one obtains an indiscernible sequence $(b_n: n \in \Nn)$ and some measure $\mu'$
with the same property. 
So $\mu(\phi(x,b_m) \setminus \phi(x,b_{m+1})) \geq \epsilon/4 $ for all odd $m$ (or for all even $m$; say odd.) 
 It follows by elementary measure considerations that 
$(\mu(\phi(x,b_m) \setminus \phi(x,b_{m+1})): m=2,4,\ldots)$ cannot be $k$-inconsistent, for any $k$.  So
$\{\phi(x,b_m): m=1,2,\ldots \} \union \{\neg \phi(x,b_m): m=2,4,\ldots \}$ is consistent.  But by indiscernibility
the same must be true for any subset in place of the odds, contradicting $NIP$.

Keisler measures play an important role in the solution of certain conjectures on groups in 
$o$-minimal structures \cite{NIPI}. They were studied further and from a more stability-theoretic point of view in \cite{NIPII}.  In fact in the latter paper, we
{\em defined} generically stable measures to be global Keisler measures which are both definable over and finitely satisfiable in some small model. We also found natural examples as translation invariant measures on suitable definable groups (such as definably compact groups in $o$-minimal theories). However, there were on the face of it technical obstacles to obtaining analogous properties (like stationarity, total indiscernibility) for generically stable measures as for generically stable types. For example, what is a ``realization" of a measure, or a ``Morley sequence in a measure"? This is solved in various ways in the current paper, including making heavy use of Keisler's ``smooth measures" (see section 2). Essentially a complete counterpart to the type case is obtained, the main results along these lines being Theorem 3.2 and Proposition 3.3, where another property, ``frequency interpretation measure" makes an appearance. Moreover we also point out how widespread generically stable measures are in $NIP$ theories. 

Let us take the opportunity to remark that a natural formal way to deal with ``technical" issues such as realizing Keisler measures would be to  pass to the randomization $T^{R}$ of $T$. $T^{R}$ is a continuous first order theory whose models are random variables in models of $T$. The type spaces of $T^{R}$ correspond to the spaces of Keisler measures (over $\emptyset$) of $T$.
This randomization was introduced by Keisler and situated in the context of continuous logic by Ben Yaacov and Keisler \cite{BY-Keisler}. Ben Yaacov proved that $T^{R}$ has $NIP$ if $T$ does, and further showed  that making systematic use of $T^{R}$ would provide, in principle, another route to the results of the current paper (\cite{BY1}, \cite{BY2}).   Measures in $NIP$ theories are roughly of the same complexity as types, as is evidenced for instance by boundedness of the number of formulas modulo measure zero.
But measures on the space of measures appear  to be genuinely analytic objects, and required nontrivial analytic tools in Ben Yaacov's treatment.  We make use of a weak version of  Ben Yaacov's preservation theorem (see Lemma 2.10) to give one proof of our characterization of generically stable measures
(Theorem 3.2), but also give an independent proof remaining within the usual model theoretic framework.

In section  4 we generalize the notion of {\em a group with finitely satisfiable generics} or with the $fsg$ property, to types and measures, and make the connection with generic stability.

In section 5 we introduce a weak notion of ``compact domination" where the set being dominated is a space of types rather than a definable or type-definable set. We relate this to stationarity of measures (unique nonforking extensions) in what we consider to be a measure-theoretic version of the finite equivalence theorem.

In section 6, we prove smoothness (unique extension to a global Keisler measure)
of Borel probability measures on real or $p$-adic semialgebraic sets, yielding 
a quite extensive strengthening of work by Karpinski and Macintyre in the case of Haar measure.

As far as sections 2, 3 and 6 are concerned, the paper is relatively self-contained. However sections 4 and 5 make rather more references to the earlier papers \cite{NIPI} and \cite{NIPII}, and not only hyperimaginaries but also the compact Lascar group are involved.

The current paper does not only follow on from those two earlier papers, but also naturally continues and builds on Keisler's original papers \cite{Keisler1}, \cite{Keisler2}.

\vspace{2mm}
\noindent
We fix a complete first order theory $T$. We typically work in $T^{eq}$. For convenience we choose a very saturated ``monster model" or ``universal domain" ${\bar M}= {\bar M}^{eq}$. $M, N, M_{0},..$ denote small elementary submodels. For now $A,B,C,..$ denote subsets, usually small, 
of ${\bar M}$. $x,y,..$ range over (finitary) variables and by convention a variable carries along with it its sort. 

The reader is referred to say \cite{Pillay-book}, \cite{Adler}, \cite{Poizat}, \cite{Shelah} as well as  
\cite{NIPI}, \cite{NIPII}, for extensive and detailed material around stable theories, $NIP$ theories as well as the adaptation/interpretation of forking to types and measures in $NIP$ theories.

However we recall here the key notions relevant to the current paper. 

It is convenient to start with the notion of a finitely additive measure $\mu$ on an arbitrary  Boolean algebra $\Omega$: $\mu(b)\in [0,1]$ for all $b$ in $\Omega$, $\mu(1) = 1, \mu(0) = 0$ and $\mu$ is finitely additive. As in section 4 of \cite{NIPII}, such a measure  on a Boolean algebra $\Omega$ can be identified with a regular Borel probability measure on the Stone space $S_{\Omega}$ of $\Omega$. The set of finitely additive measures  on $\Omega$ is naturally a compact space.

We apply this to our monster model ${\bar M}$. 
By a Keisler measure $\mu_{x}$ {\em over $A$} we mean a finitely additive measure on the Boolean algebra of formulas $\phi(x)$ over $A$ up to equivalence in ${\bar M}$.
So a Keisler measure over $A$ generalizes the notion of a complete type over $A$ rather than a partial type over $A$. By a global Keisler measure we mean one over ${\bar M}$.
So again a global Keisler measure generalizes the notion of a global complete type. We repeat from the previous paragraph that a Keisler measure $\mu_{x}$ over $A$ coincides with a regular probability measure on $S_{x}(A)$. We often talk about closed, open, Borel, sets, over $A$. So for example, a
Borel set over $A$ is simply the union of the sets of realizations in ${\bar M}$ of types $p\in S(A)$, for $p$ in some given Borel subset of $S_{x}(A)$. 

Keisler \cite{Keisler1} uses the notion of a measure over or on a {\em fragment}, which it is now convenient to work with in a generalized form. By a fragment $F$ he means a small collection of formulas $\phi(x)$ (or definable sets of sort $x$) which is closed under (finite) Boolean combination.   (A typical case is the collection of all formulas over a given base set $A$.)  Then a Keisler measure on or over $F$ is simply a finitely additive probability measure on this Boolean algebra of definable subsets of sort $x$. As above this identifies with a regular Borel probability measure on the space $S_{F}$ of complete types over $F$. He also remarks that if $F\subseteq G$ are fragments (in sort $x$) then any Keisler measure on $F$ extends to one on $G$. In particular any Keisler measure on $F$ extends to a global Keisler measure on the sort of $x$. 

For most of this paper this notion of fragment is adequate, and the reader may proceed with this in mind,
at least until section 5.  However in some situations we will need to consider  algebras of subsets of  ${\bar M}$  that, while contained in the Borel subalgebra of $S_F$ for various fragments $F$ of formulas, cannot canonically be presented in this manner.  We 
therefore give in advance a formalism beginning with closed rather than clopen sets,
i.e. partial types rather than formulas.   Our fragments correspond to  small topological quotients of the space of global types:   an element  of the fragment is the pullback of a closed set.  We describe this more syntactically in 
the next paragraph.  
 
Let $F$ now consist of a small collection of partial types $\Sigma(x)$ in a fixed set of variables $x$, identified if you wish with their sets of realizations in ${\bar M}$.  We assume $F$ is closed under finite disjunctions and (possibly infinite) conjunctions. We will call a subset of the $x$-sort of ${\bar M}$  {\em closed} over $F$ it is defined by a partial type in $F$, and open over $F$ if it is the complement of a closed over $F$ set (and also we can obtain the Borel over $F$ sets). 

\begin{Definition} 
(a) Let $F$ be as in the above paragraph. We call $F$ a {\em fragment} if
\newline
(i) any open set over $F$ is a union of closed sets over $F$, and
\newline
(ii) any two disjoint closed over $F$ sets are separated by two disjoint open over $F$ sets.
\newline
(b) If $F$ is a fragment, let $S_{F}$ denote the set of maximal partial types in $F$ (i.e. maximal among partial types in $F$). 
\end{Definition}

Clearly a fragment in the sense of Keisler extends uniquely to a fragment in the sense of Definition 1.1.

For a fragment $F$ define a  topology  on $S_{F}$ in the obvious way: a closed set
is by definition a set of points extending a given partial type in $F$. Then with this definition it is clear that $S_{F}$ will be a compact Hausdorff space. 

\begin{Definition}
By a Keisler measure on or over a fragment $F$ we mean a map from the set of closed/open over $F$ sets to $[0,1]$ which is induced by a regular Borel probability measure on the space $S_{F}$.
\end{Definition} 

For hyperimaginaries, as well as the notion $bdd(A)$ (set of hyperimaginaries in the bounded closure of $A$) see \cite{HKP} or \cite{Wagner}.

\begin{Lemma} (i) Let $A$ be a small set of hyperimaginaries. Then the collection of partial types over $A$ is a fragment.
\newline
(ii) Let $F\subseteq G$ be fragments (in sort $x$). Then any Keisler measure over $F$ extends to a Keisler measure over $G$. 
\end{Lemma}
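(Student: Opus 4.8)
The plan for (i) is to verify the two clauses of Definition 1.1 directly, and for (ii) to transport the measure along the natural restriction map between the associated type spaces.

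\emph{Part (i).} Let $A$ be a small set of hyperimaginaries and $F$ the collection of partial types over $A$. The closed over $F$ subsets of the $x$-sort of $\bar M$ are by definition the sets $[\Sigma]$ of realizations of partial types $\Sigma$ over $A$, and the map $b\mapsto\operatorname{tp}(b/A)$ matches them with the closed subsets of the type space $S_x(A)$ (and likewise open with open); for real $A$ this is transparent, formulas furnishing a basis of clopen sets. For the first clause of Definition 1.1, suppose $b$ lies outside $[\Sigma]$. The $\operatorname{Aut}(\bar M/A)$-orbit of $b$ equals $[\operatorname{tp}(b/A)]$, which is closed over $F$ — clear for real $A$, and for hyperimaginary $A$ this is the standard fact (see \cite{HKP}, \cite{Wagner}) that complete types over a set of hyperimaginaries have type-definable-over-it sets of realizations. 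Since $[\Sigma]$ is $\operatorname{Aut}(\bar M/A)$-invariant and misses $b$, it is disjoint from that orbit; hence the complement of $[\Sigma]$ is the union of the closed over $F$ sets $[\operatorname{tp}(b/A)]$ as $b$ ranges over it. For the second clause, let $[\Sigma_1]$ and $[\Sigma_2]$ be disjoint closed over $F$ sets. For real $A$, compactness yields finite conjunctions $\psi_i$ from $\Sigma_i$ with $\psi_1\wedge\psi_2$ inconsistent, and then $[\neg\psi_2]\supseteq[\Sigma_1]$ and $[\psi_2]\supseteq[\Sigma_2]$ are disjoint open over $F$ sets. In general it is just normality of the compact Hausdorff space $S_x(A)$ — again standard for hyperimaginaries — pulled back through the dictionary above.

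\emph{Part (ii).} Given fragments $F\subseteq G$ in sort $x$, define $r\colon S_G\to S_F$ by $r(q)=\{\Sigma\in F: b\models\Sigma\}$ for any realization $b$ of $q$. This is independent of the choice of $b$: a maximal partial type $q$ in $G$ decides every $\Sigma\in G\supseteq F$, since if $b\not\models\Sigma$ then, by the first clause of Definition 1.1 applied to $G$, some $\Sigma'\in G$ with $b\models\Sigma'$ already implies $\neg\Sigma$, and any other realization of $q$ then satisfies $\Sigma'$ hence not $\Sigma$. The same observation for $F$ shows $r(q)$ is maximal in $F$, so $r$ is well defined; it is continuous because the preimage of $\{p\in S_F:\Sigma\in p\}$ is $\{q\in S_G:\Sigma\in q\}$ for $\Sigma\in F$; and it is surjective because, given $p\in S_F$ realized by $b$, the set $q:=\{\Sigma\in G:b\models\Sigma\}$ is — by the first clause for $G$ — a maximal partial type in $G$ with $r(q)=p$. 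Now a Keisler measure over $F$ is a regular Borel probability measure $\mu$ on $S_F$, and since $r$ is a continuous surjection of compact Hausdorff spaces, $\mu$ lifts to a regular Borel probability measure $\nu$ on $S_G$ with $r_*\nu=\mu$: extend the functional $f\mapsto\int f\,d\mu$ along the isometric embedding $C(S_F)\hookrightarrow C(S_G)$, $f\mapsto f\circ r$, by Hahn--Banach and invoke Riesz representation; or note that $r$ induces a continuous affine map of the compact convex spaces of probability measures whose image is closed, convex and contains every point mass. Regarding $\nu$ as a Keisler measure over $G$, for $\Sigma\in F$ we get $\nu([\Sigma])=\nu(r^{-1}\{p:\Sigma\in p\})=\mu([\Sigma])$, so $\nu$ extends $\mu$.

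The step most in need of care is the hyperimaginary input to (i) — that $S_x(A)$ is compact Hausdorff and that complete types over $A$ are type-definable over $A$ — since this is what legitimizes both the orbit argument for the first clause and the normality argument for the second; over real parameters the whole of (i) is elementary and clopen-based. In (ii) the one substantive ingredient is the measure lift along a continuous surjection of compact Hausdorff spaces, a standard piece of functional analysis; checking that $r$ is well defined, continuous and surjective is routine once the relevant clause of Definition 1.1 is invoked.
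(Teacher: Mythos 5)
Your proof is correct. Note that the paper states this lemma without any proof (it is treated as routine, in the spirit of Keisler's remark that measures on fragments always extend), so there is no argument of the authors' to compare against; what you give — a clause-by-clause verification of Definition 1.1 using the standard hyperimaginary facts (the set of realizations of a complete type over $A$ is the $Aut({\bar M}/A)$-orbit and is type-definable over $A$, and $S_{x}(A)$ is compact Hausdorff), followed by lifting the measure along the continuous surjection $S_{G}\to S_{F}$ via Hahn--Banach and Riesz representation — is exactly the expected filling-in, and the points you flag as the substantive inputs are the right ones. The only (cosmetic) omissions are the trivial checks that the collection of partial types over $A$ is small and closed under finite disjunctions and arbitrary conjunctions, as required by the paragraph preceding Definition 1.1.
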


\vspace{2mm}
\noindent
One more definition at the level of fragments is:
\begin{Definition}  Let $\mu$ be a measure over a fragment $F$. Let $D$ be a Borel set over $F$ with positive $\mu$ measure. Then the localization $\mu_{D}$ of $\mu$ at $D$ is defined by: For any Borel $E$ over $F$, $\mu_{D}(E) = \mu(E\cap D)/\mu(D)$. 
\end{Definition}

Now we pass to forking for measures in $NIP$ theories. First, $T$ is said to have the independence property, if there is an indiscernible (over $\emptyset$) sequence $(a_{i}:i<\omega)$ and formula $\phi(x,b)$ such that $\models \phi(a_{i},b)$ for $i$ even, and $\models \neg\phi(a_{i},b)$ for $i$ odd.
We usually say that $T$ is (or has) $NIP$ if $T$ does not have the independence property. 

We recall that a formula $\phi(x,b)$ (where we exhibit the parameters) {\em divides over} a small set $A$ if there is an $A$-indiscernible sequence $(b_{i}:i<\omega)$ with $b_{0} = b$ such that $\{\phi(x,b_{i}):i<\omega\}$ is inconsistent. A formula {\em forks over} $A$ if it implies a finite disjunction of formulas each of which divides over $A$.
We say that a global Keisler measure $\mu_{x}$ does not divide (does not fork) over a small set $A$ if every formula $\phi(x)$ with positive $\mu$-measure does not divide (does not fork) over $A$. In fact for such global  $\mu$,  not dividing over $A$ and not forking over $A$ are equivalent, and $A$ can even be a set of hyperimaginaries. Recall from \cite{NIPII} that assuming $T$ has $NIP$, $\mu$ does not fork over $A$ iff $\mu$ is $Aut({\bar M}/bdd(A))$ invariant (we just say $bdd(A)$-invariant) iff $\mu$ is Borel definable over $bdd(A)$. 
Here Borel-definability of $\mu$ over $A$, means that for a given formula $\phi(x,y)\in L$ and closed  subset $C$ of $[0,1]$, $\{b: \mu(\phi(x,b)\in C\}$ is Borel over $A$.
We persist in calling a global measure $\mu_{x}$ {\em definable} over $A$ if for $\phi(x,y)\in L$ and closed $C\subseteq [0,1]$, $\{b:\phi(x,b)\in C\}$ is closed over $A$, namely type-definable over $A$. (Although the expression $\infty$-definable might be better.) We also say that $\mu$ is finitely satisfiable in $A$ (where usually $A$ is a model $M$) if every formula over ${\bar M}$ with positive $\mu$-measure is satisfied by some
element (or tuple) from $A$. These are all natural generalizations of the corresponding classical notions for global types. 

Let us make the important remark that if the global Keisler measure $\mu_{x}$ is finitely satisfiable over $A$, then it is also $A$-invariant, hence 
(assuming that $T$ has $NIP$) is Borel definable (over $A$). 

The (nonforking) {\em product} of measures $\mu_{x}$ and $\lambda_{y}$ is a fundamental notion in this paper (as well as in \cite{NIPII}). Identifying a global Keisler measure $\mu_{x}$ with a measure on $S_{x}({\bar M})$, then this could not simply be the usual product measure because the type space $S_{xy}({\bar M})$ is not the product of $S_{x}({\bar M})$ with $S_{y}({\bar M})$ (and the same issue arises for types). In the case of types, if $p(x)$, $q(y)$ are global complete types, and $p(x)$ does not split over $A$ for some small $A$ (equivalently is $A$-invariant), then we can form $p(x)\otimes q(y)$ in variables $xy$, defined as $tp(a,b/{\bar M})$ where $b$ realizes $q$ and $a$ realizes $p|{\bar M},b$. Equivalently, $\phi(x,y,m)\in p(x)\otimes q(y)$ if for some (any) $b$ realizing $q|A,m$, $\phi(x,b)\in p$. 
Now if $\mu(x)$ is Borel definable (over $A$) say, and $\lambda(y)$ arbitrary (both global say) then the analogous product $\mu_{x}\otimes \lambda_{y}$ is obtained via 
{\em integration}: Pick a formula $\phi(x,y)$ over ${\bar M}$. For any $q(y)\in S_{y}({\bar M})$, and realization $b$ of $q$, we can consider the extension $\mu' = \mu|({\bar M},b)$ of $\mu'$ given by applying the same Borel definition. In any case $\mu'(\phi(x,b))$ depends only on $q$, so we can write it as $f(q)$ for some function $f:S_{y}({\bar M})\to [0,1]$.  The Borel definability of $\mu$ says that the function $f$ is Borel (preimage of a closed set is Borel). Hence we can integrate $f$ along $\lambda$ (treated as a Borel measure on $S_{y}({\bar M})$), to obtain $\int_{S_{y}({\bar M})} f(q) d\lambda$. And we call this $(\mu(x)\otimes \lambda(y))(\phi(x,y))$. 

Note that this integral can be ``computed" as follows: again choose a formula $\phi(x,y,m)$ where now we exhibit additional parameters from ${\bar M}$ as $m$. Fix natural number $N$ and partition $[0,1]$ into equal intervals $I_{1},..,I_{N}$ of length $1/N$, let $Y_{j} = \{b:\mu(\phi(x,b,m)\in I_{j}\}$ (a Borel set over $A,m$), let $c_{j}$ be the midpoint of $I_{j}$. Let $F_{N} = \sum_{j=1,..,N}\lambda(Y_{j})c_{j}$. Then 
$(\mu(x)\otimes \lambda(y))(\phi(x,y,m)) = lim_{N\to\infty}F_{N}$.

We will often use this, when doing approximations or computations. 

\begin{Lemma}  Suppose that $\mu(x)$, $\lambda(y)$ are global Keisler measures which are both
definable. Then so is $\mu(x)\otimes \lambda(y)$. Likewise for Borel definable, and (assuming $NIP$) ``finitely satisfiable in a small model". 
\end{Lemma}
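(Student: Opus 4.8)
The plan is to reduce to the familiar reformulation of the hypotheses: a global measure $\nu$ is definable over a small model $M$ exactly when for every formula $\psi(u,z)\in L$ the map $S_{z}(M)\to[0,1]$, $\operatorname{tp}(b/M)\mapsto\nu(\psi(u,b))$, is well defined and continuous, and it is Borel definable over $M$ exactly when this map is well defined and Borel. Write $x,y$ for the product variables and $z$ for the variables of the parameters one varies; then the lemma is the assertion that $b\mapsto(\mu\otimes\lambda)(\phi(x,y,b))$ depends, in the appropriate way, only on $\operatorname{tp}(b/M)$, for an arbitrary $\phi(x,y,z)\in L$. Since each of the three properties passes from a base to any larger base, I would first fix a single small model $M$ over which both $\mu$ and $\lambda$ have the property in question, and work over it throughout.

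\emph{Definable case.} First, $\mu\otimes\lambda$ is $M$-invariant, since the product construction is $\operatorname{Aut}({\bar M}/M)$-equivariant and both factors are $M$-invariant; so $b\mapsto(\mu\otimes\lambda)(\phi(x,y,b))$ factors through $S_{z}(M)$. By definability of $\mu$ over $M$, the map $h\colon S_{yz}(M)\to[0,1]$, $\operatorname{tp}(cb/M)\mapsto\mu(\phi(x,c,b))$, is well defined and continuous. For $b$ with $\operatorname{tp}(b/M)=p$, let $\pi_{b}\colon S_{y}({\bar M})\to S_{yz}(M)$ send $\operatorname{tp}(c/{\bar M})$ to $\operatorname{tp}(cb/M)$; it is continuous, as the $\pi_{b}$-preimage of the clopen set $[\psi(y,z)]$ is the clopen set $[\psi(y,b)]$. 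The integral defining the product then rewrites as $(\mu\otimes\lambda)(\phi(x,y,b))=\int_{S_{y}({\bar M})}h\circ\pi_{b}\,d\lambda=\int_{S_{yz}(M)}h\,d\lambda_{p}$, where $\lambda_{p}=(\pi_{b})_{*}\lambda$ depends only on $p$ (again by $M$-invariance of $\lambda$). The decisive step is that $p\mapsto\lambda_{p}$ is weak${}^{*}$-continuous from $S_{z}(M)$ into the regular Borel probability measures on $S_{yz}(M)$: on a clopen $[\psi(y,z)]$ we have $\lambda_{p}([\psi(y,z)])=\lambda(\psi(y,b))$, which is continuous in $p$ precisely because $\lambda$ is definable over $M$, and since on the Stone space $S_{yz}(M)$ the clopen simple functions are uniformly dense in $C(S_{yz}(M))$, continuity against all continuous test functions follows. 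Pairing the fixed continuous function $h$ against this weak${}^{*}$-continuous family shows $p\mapsto\int h\,d\lambda_{p}$ is continuous on $S_{z}(M)$, which is exactly definability of $\mu\otimes\lambda$ over $M$.

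\emph{Borel definable case.} I would run the identical argument, with $h$ now Borel instead of continuous and $p\mapsto\lambda_{p}([\psi(y,z)])=\lambda(\psi(y,b))$ Borel instead of continuous. The extra ingredient is the routine measure theory of Radon measures on Stone spaces: a Dynkin (monotone-class) argument shows that the family of Borel $K\subseteq S_{yz}(M)$ for which $p\mapsto\lambda_{p}(K)$ is Borel contains all clopens and is closed under complementation and countable increasing unions, hence contains the $\sigma$-algebra generated by the clopens (which is where the relevant Borel-over-$M$ sets live); approximating $h$ by simple functions over that $\sigma$-algebra makes $p\mapsto\int h\,d\lambda_{p}$ Borel. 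Equivalently, one can stay with the Riemann-sum approximation $F_{N}(b)=\sum_{j}c_{j}\,\lambda(\{c:\mu(\phi(x,c,b))\in I_{j}\})$ of the text, noting that $|F_{N}(b)-(\mu\otimes\lambda)(\phi(x,y,b))|\le 1/(2N)$ for \emph{every} $b$, that each $F_{N}$ is a Borel function of $\operatorname{tp}(b/M)$ by the same Dynkin argument, and that a uniform (indeed pointwise) limit of Borel functions is Borel.

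\emph{Finitely satisfiable case.} Here $NIP$ is assumed, so both $\mu$ and $\lambda$ are Borel definable and $\mu\otimes\lambda$ is defined; I would argue directly rather than through $\operatorname{tp}(b/M)$. Given a formula $\theta(x,y,b)$ over ${\bar M}$ with $(\mu\otimes\lambda)(\theta(x,y,b))>0$, the integral formula forces the Borel set $B=\{\operatorname{tp}(c/{\bar M}):\mu(\theta(x,c,b))>0\}$ to have positive $\lambda$-measure; finite satisfiability of $\mu$ in $M$ gives $B\subseteq\bigcup_{a\in M^{|x|}}[\theta(a,y,b)]$, so by inner regularity of $\lambda$ a positive-measure compact subset of $B$ is covered by finitely many of these clopens, whence $\lambda(\theta(a_{0},y,b))>0$ for some $a_{0}\in M^{|x|}$; finite satisfiability of $\lambda$ in $M$ then yields $a_{1}\in M^{|y|}$ with $\models\theta(a_{0},a_{1},b)$, so $(a_{0},a_{1})\in M$ realizes $\theta$. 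The main obstacle is the ``continuity upgrade'' in the first two cases, namely converting the pointwise integral description of $\mu\otimes\lambda$ into genuine topological (resp.\ Borel) dependence on $\operatorname{tp}(b/M)$; the device of writing $(\mu\otimes\lambda)(\phi(x,y,b))=\int h\,d\lambda_{\operatorname{tp}(b/M)}$ with $h$ a fixed function and $p\mapsto\lambda_{p}$ a weak${}^{*}$-continuous (resp.\ Borel-varying) family of measures is what makes this go through, and in the Borel case one must additionally supply the standard fact that Borel-dependence on clopen sets upgrades to Borel-dependence on all Borel sets. The finitely satisfiable case is comparatively soft.
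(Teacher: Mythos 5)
Your proposal is correct, and on the only case the paper actually proves --- finite satisfiability --- it follows essentially the paper's route: positivity of the integral forces the set of fibre types on which $\mu$ gives the formula positive measure to have positive $\lambda$-measure, inner regularity extracts a closed positive-measure subset, and then finite satisfiability of $\mu$ and of $\lambda$ are applied in turn. The only difference is cosmetic: the paper works over $(M,b)$ and realizes in ${\bar M}$ a weakly random point of the closed set, so that finite satisfiability of $\mu$ applies to an honest formula over ${\bar M}$, whereas you cover a compact positive-measure subset of $B\subseteq S_{y}({\bar M})$ by the clopens $[\theta(a,y,b)]$, $a\in M$, and take a finite subcover. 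Note that your covering claim quietly uses that the canonical $M$-invariant extension of $\mu$ to parameters outside ${\bar M}$ is still ``finitely satisfiable in $M$'' (equivalently, pass to a realization inside ${\bar M}$ of the restricted type, as the paper does); this is true and takes one line via $M$-invariance, but it should be said.

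For the definable and Borel definable cases the paper gives no proof at all, so your arguments there are added value rather than a divergence. The definable case (fixed continuous $h$ paired against the weak$^{*}$-continuous family $p\mapsto\lambda_{p}$, using uniform density of clopen simple functions) is correct. The one genuinely delicate point is the parenthetical in your Borel case: the Dynkin argument gives Borelness of $p\mapsto\lambda_{p}(K)$ only for $K$ in the $\sigma$-algebra generated by the clopens of $S_{yz}(M)$, while Borel definability of $\mu$ only puts the level sets of $h$ in the topological Borel $\sigma$-algebra; these coincide when the language (hence the type space) is countable, but not in general, and an uncountable infimum over clopen neighbourhoods of a closed set need not be Borel, so the claim ``which is where the relevant Borel-over-$M$ sets live'' is not automatic. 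The same caveat applies verbatim to your Riemann-sum variant, since $\lambda(Y_{j}(b))=\lambda_{p}(h^{-1}(I_{j}))$. Since the paper simply asserts this case, this is a matter of stating the convention or restriction (e.g.\ countable language, or a stronger form of Borel definability) under which your measure-theoretic upgrade is valid, rather than a wrong step in an otherwise sound argument.
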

\begin{proof} Let us just deal with the finitely satisfiable case, the proof of which will be an elementary example of methods which pervade the paper. Assume that both $\mu$ and $\lambda$ are finitely satisfiable in $M$. We show that $\mu\otimes\lambda$ is too. Let $\phi(x,y,m)$ be a formula over ${\bar M}$ with positive $\mu_{x}\otimes \lambda_{y}$ measure (where we exhibit the parameter $m$). It follows from the definition of this ``nonforking product" that $Y = \{b\in {\bar M}:\mu(\phi(x,b,m)) > 0\}$ is a Borel set over $M,m$ of positive $\lambda_{y}$-measure. By regularity of $\lambda$ (as a Borel measure on $S_{y}(M,m)$) there is a closed over $M,m$ set $Z$ say, with $Z\subseteq Y$ and $\lambda(Z) > 0$. By compactness let $b\in Z$ be weakly random for $\lambda|(M,m)$ in the sense that $\models\neg\chi(b)$ for any formula $\chi(y)$ over $M,m$ with $\lambda(\chi(y)) = 0$. As $b\in Z\subseteq Y$, $\mu(\phi(x,b,m)) > 0$. As $\mu$ is finitely satisfiable in $M$, there is $a'\in M$ such that $\models\phi(a',b,m)$. By choice of $b$, $\lambda(\phi(a',y,m)) > 0$, so by finite satisfiability of $\lambda$ in $M$ there is $b'\in M$ such that $\models \phi(a',b',m)$. This completes the proof.

\end{proof}

In general, the product of measures is not commutative; a measure need not even commute with itself:
we can have $\mu_x \otimes \mu_y \neq \mu_y \otimes \mu_x$.  The question of commutativity will become
central later on.  We note at this point that  $\mu_x \otimes \lambda_y = \lambda_y \otimes \mu_x$ iff the Borel measure-zero sets of these two measures coincide.  This will not be explicitly used in the body of the paper. In the 
lemma below we take the point of view of a global Keisler measure as a regular probability measure on the relevant Stone space of global types.

\begin{Lemma} (NIP)  Let $\mu_x,\lambda_y$ be global measures, invariant over some small set.

\begin{enumerate} \item For any definable set $\phi(x,y)$ there is a Borel subset $U_{\phi}$ of the space 
$S_{x}({\bar M})\times S_{y}({\bar M})$ (so in the $\sigma$-algebra on $S_{xy}({\bar M})$ generated by rectangles $D_{x}\times E_{y}$)
 such that $\phi(x,y),U_\phi$ are equal up to $\mu_x \otimes \lambda_y$-measure zero.

\item Commutativity can be checked at the level of the Borel measure-zero ideal:
if $\mu_x \otimes \lambda_y (U)=0$ for any closed $U$ such that $\lambda_y \otimes \mu_x (U)=0$,
then $\mu_x \otimes \lambda_y = \lambda_y \otimes \mu_x$.

\end{enumerate}

\end{Lemma}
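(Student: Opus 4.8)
The plan is to treat both products as regular Borel probability measures on the Stone space $S_{xy}(\bar M)$: write $\nu_1=\mu_x\otimes\lambda_y$, $\nu_2=\lambda_y\otimes\mu_x$, and let $r\colon S_{xy}(\bar M)\to S_x(\bar M)\times S_y(\bar M)$ be the restriction map. Pulling back along $r$ identifies the $\sigma$-algebra generated by rectangles on the target with a sub-$\sigma$-algebra $\mathcal B_0$ of the Borel $\sigma$-algebra of $S_{xy}(\bar M)$, and ``$U$ is equal to $\phi(x,y)$ up to $\nu_i$-measure zero'' will mean $U=r^{-1}(V)$ for some product-Borel $V$ with $[\phi]\triangle r^{-1}(V)$ being $\nu_i$-null. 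Since $\mu,\lambda$ are invariant over a small set and $T$ is $NIP$, both are Borel definable; with regularity this is all I use.

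For (1) the idea is a summable sequence of finite approximations of the fibres of $\phi$. Fix $\phi(x,y)$. By Keisler's observation recalled in the introduction, for each $n$ there are $b^n_1,\dots,b^n_{k_n}\in\bar M$ such that every fibre satisfies $\mu\big(\phi(x,b)\triangle\phi(x,b^n_i)\big)<2^{-n}$ for some $i$. Borel definability of $\mu$ (applied to $\phi(x,y)$ and to $\phi(x,y)\wedge\phi(x,z)$) makes $q\mapsto\mu\big(\phi(x,b_q)\triangle\phi(x,b^n_i)\big)$ Borel on $S_y(\bar M)$, so I split $S_y(\bar M)$ into Borel pieces $Y^n_1,\dots,Y^n_{k_n}$ on which this quantity is minimised (least index) by the corresponding $i$. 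Put $C^n_i=[\phi(x,b^n_i)]\subseteq S_x(\bar M)$ and $V^n=\bigcup_i C^n_i\times Y^n_i$. One writes $[\phi]\triangle r^{-1}(V^n)$ as a finite disjoint union of sets $[\theta(x,y)]\cap r^{-1}\!\big(S_x(\bar M)\times Y^n_i\big)$ with $\theta$ of the form $\phi(x,y)\wedge\neg\phi(x,b^n_i)$ or $\phi(x,b^n_i)\wedge\neg\phi(x,y)$, and applies the Fubini-type identity
\[
\nu_1\big([\theta]\cap r^{-1}(S_x(\bar M)\times Y)\big)=\int_Y\mu(\theta(x,b_q))\,d\lambda(q)\qquad (Y\subseteq S_y(\bar M)\text{ Borel});
\]
on each piece the integrand is $<2^{-n}$, so $\nu_1\big([\phi]\triangle r^{-1}(V^n)\big)<2^{1-n}$. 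As these are summable, Borel--Cantelli gives that $U_\phi:=\liminf_n V^n$ satisfies $\nu_1\big([\phi]\triangle r^{-1}(U_\phi)\big)=0$, which is (1). The one step needing care is the displayed identity, which I would obtain by a monotone-class argument from the clopen case $Y=[\chi(y)]$, where it is the defining integral formula for $\nu_1$ together with $\mu\big(\theta(x,b)\wedge\chi(b)\big)=\mathbf 1_{[\chi]}(q)\,\mu(\theta(x,b))$.

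For (2), first I would upgrade the hypothesis (call this Step~A): if $U$ is Borel with $\nu_2(U)=0$ but $\nu_1(U)>0$, inner regularity of $\nu_1$ yields a closed $K\subseteq U$ with $\nu_1(K)>0$ while $\nu_2(K)=0$, contradicting the hypothesis; so $\nu_1\ll\nu_2$. Next, the two defining integral formulas give that $r_*\nu_1$ and $r_*\nu_2$ agree on every clopen-by-Borel rectangle $[\psi(x)]\times Y$ (both equal $\mu(\psi(x))\,\lambda(Y)$), hence on the $\sigma$-algebra these generate, which contains all the sets produced in (1). Applying (1) to both products, for each $\phi(x,y)$ choose product-Borel $U^{(1)}_\phi,U^{(2)}_\phi$ with $[\phi]\triangle r^{-1}(U^{(i)}_\phi)$ being $\nu_i$-null, so $\nu_i([\phi])=r_*\nu_i\big(U^{(i)}_\phi\big)$. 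Now $[\phi]\triangle r^{-1}(U^{(2)}_\phi)$ is $\nu_2$-null, hence by Step~A also $\nu_1$-null; together with $[\phi]\triangle r^{-1}(U^{(1)}_\phi)$ being $\nu_1$-null this makes $r^{-1}(U^{(1)}_\phi\triangle U^{(2)}_\phi)$ $\nu_1$-null, i.e. $r_*\nu_1\big(U^{(1)}_\phi\triangle U^{(2)}_\phi\big)=0$. Therefore $\nu_1([\phi])=r_*\nu_1(U^{(1)}_\phi)=r_*\nu_1(U^{(2)}_\phi)=r_*\nu_2(U^{(2)}_\phi)=\nu_2([\phi])$. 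Since $\phi(x,y)$ was arbitrary and two regular Borel measures on the Stone space $S_{xy}(\bar M)$ agreeing on all clopen sets coincide, $\nu_1=\nu_2$.

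I expect the main obstacle to be the reverse absolute continuity $\nu_2\ll\nu_1$: Step~A is cheap, but there is no equally direct route to the other inclusion, and the argument above instead recovers the needed symmetry by feeding part (1) back in together with $\nu_1\ll\nu_2$. A secondary point to handle carefully is the monotone-class extension of the Fubini identities and the Borel measurability of the pieces $Y^n_i$, both of which rest on Borel definability of $\mu$.
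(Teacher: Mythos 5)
Your proof is correct, and part (1) is essentially the paper's argument dressed a little differently: both take Keisler's $\varepsilon$-nets $\mathcal L_m$ of formulas, partition $S_y(\bar M)$ into Borel pieces according to the nearest net element (Borel definability of $\mu$ is what makes the partition Borel), and then take a $\liminf$; the summable errors / Borel--Cantelli step you use is exactly the ``usual proof of completeness of $L^1(\mu)$'' the paper invokes, so the two are interchangeable. Part (2) is where you genuinely diverge, and your version is the one that actually matches the hypothesis as stated. The printed statement gives $\mu_x\otimes\lambda_y\ll\lambda_y\otimes\mu_x$ on closed sets, but the paper's written proof applies the hypothesis in the reverse direction (it goes from $\mu_x\otimes\lambda_y(U)=0$ to $\lambda_y\otimes\mu_x(U)=0$, i.e.\ uses $\lambda_y\otimes\mu_x\ll\mu_x\otimes\lambda_y$), so it only needs one application of (1); this is almost certainly a typo/asymmetry slip in the paper, harmless because the hypotheses are interchangeable up to renaming $\mu$ and $\lambda$, but it is a real mismatch. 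You read the hypothesis literally, correctly observe that the other absolute-continuity direction is not free, and repair it by applying (1) to \emph{both} products (which is legitimate by symmetry of the Lemma's hypotheses on $\mu,\lambda$). Both arguments hinge on the same key fact, that $\mu_x\otimes\lambda_y$ and $\lambda_y\otimes\mu_x$ agree on the pulled-back product $\sigma$-algebra because both are separated amalgams of $\mu$ and $\lambda$; the paper simply asserts ``$\mu_x\otimes\lambda_y(U)=\lambda_y\otimes\mu_x(U)$'' while you justify it via agreement on rectangles and a $\pi$-$\lambda$ argument. So: correct, same core ideas, but your (2) is the careful version consistent with the statement, and you make explicit two things (the Fubini identity for Borel-by-clopen rectangles, and the separated-amalgam agreement) that the paper leaves implicit.
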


\begin{proof}     We may write $\mu = \int_{a \in X} p^a, \lambda=\int_{b \in Y} q^b$, where
$X,Y$ are the Stone spaces of the Boolean algebra of global definable sets, modulo the measure zero sets
of $\mu,\lambda$ respectively; made into measure spaces using the measures induced from $\mu,\lambda$;
where for $a \in X, b\in Y$, $p^a,q^b$ are the corresponding invariant types.  
\begin{enumerate} 
\item 
Given a formula $\phi(x,y)$,
let $U_\phi = \{(a,b) \in X \times Y:  \phi \in p^a \otimes q^b \}$.  We will show below that $U_\phi$ is  Borel up to a measure zero set.   Clearly the stated equality holds.
\item The assumption extends from closed to Borel sets:  if $U'$ is any Borel set with $\mu_x \otimes \lambda_y (U')=0$, then $\mu_x \otimes \lambda_y (U)=0$ for all closed $U \subseteq U'$, so by assumption
 $\lambda_y \otimes \mu_x (U)=0$ for all such $U$, and since this measure is regular, 
  $\lambda_y \otimes \mu_x (U')=0$.  
 Now let $\phi(x,y)$ be any formula.   By (1) there exists a Borel $U$ with 
    $\mu_x \otimes \lambda_y (\phi \triangle U) =0$, so $ \lambda_y \otimes \mu_x(\phi \triangle U)=0$.
But $\mu_x \otimes \lambda_y (U)  =  \lambda_y \otimes \mu_x(U)$.  So
 $\mu_x \otimes \lambda_y (\phi)=  \lambda_y \otimes \mu_x(\phi)$.
\end{enumerate}

To show that $U_\phi$ is Borel up to measure $0$, choose a finite set $\mathcal{L}_m$ of formulas $\phi(x,e)$ such that for any parameter $c$, there exists a definable set $D \in \mathcal{L}_{m}$ with $\mu(\phi(x,c) \triangle D)<  2^{-m}$.  
Let  $\mathcal{L} = \union_m \mathcal{L}_m$,  and fix some enumeration of $\mathcal{L}$ (or just of each  $\mathcal{L}_m$).  
All formulas of $\mathcal{L}$ are defined over some small model $M_0$, such that $\mu,\lambda$ are
$M_0$-invariant.  

Any $b \in Y$ determines a weakly random type for $\lambda$ over $M_0$,   $q^b
|M_0$.  Since $\mu$ is $M_0$-invariant,
for $c,c' \models q^b|M_0$ and $D \in L(M)$  we have $\mu(\phi(x,c) \triangle D)<  2^{-m}$  iff
$\mu(\phi(x,c') \triangle D)<  2^{-m}$; so $\mu( \phi(x,c) \triangle \phi(x,c')) = 0$.  Thus we will write
$\phi(x,b)$ to denote the class of any such $\phi(x,c)$, up to $\mu$ measure $0$.  

Let $D_m(b)$ be the least $D \in \mathcal{L}_m$  such that 
$\mu(\phi(x,b) \triangle D)<  2^{-m}$.

By the usual proof of completeness of $L^1(\mu)$,  $\phi(x,b)$ differs by $\mu$-measure zero from the Borel set
$$D(b) = \{x: (\exists m_0)(\forall m \geq m_0)(x \in D_m(a)) \}$$

Since $\mu$ is a Borel measure, 
$$ \{(b,m,D): D \in \mathcal{L}_m,  \mu(\phi(x,b) \triangle D)<  2^{-m}   \} $$
is Borel, and so the map $(b,m) \mapsto D_m(b)$ is Borel.

So $E=\{(a,b): (\exists m_0)(\forall m \geq m_0)(\exists D \in \mathcal{L}) (D=D_m(b)  \hbox{ and } a \in D \}$
 is also Borel.  For all $b$, $E(a)$ differs from $U_\phi(b)$ by $\mu$-measure zero.
 This finishes the proof.  
\end{proof}

\begin{Definition} (Assume $NIP$.) Let $\mu_{x}$ be a global Keisler measure which is invariant (i.e. $A$-invariant for some small $A$). Then $\mu^{(n)}_{x_{1},..,x_{n}}$ is defined (inductively) by $\mu^{(1)}_{x_{1}} = \mu_{x_{1}}$, and $\mu^{(n+1)}_{x_{1},..,x_{n},x_{n+1}} = \mu_{x_{n+1}} \otimes \mu^{(n)}_{x_{1},..,x_{n}}$.
We put $\mu^{\omega}_{x_{1},x_{2},...}$ to be the union.
\end{Definition}

\vspace{2mm}
\noindent
Finally we recall the {\em weak law of large numbers} in the form we will use it. Any basic text on probability theory is a reference.

\begin{Fact}  Let $\mu$ be a Borel probability measure on a space $X$. Let $\mu^{k}$ be the product measure on $X^{k}$. Let $Y$ be a measurable subset of $X$. For $p_{1},..,p_{k}\in X$ let $Fr_{k}({\bar p},Y)$ be $|\{i: p_{i}\in Y\}|/k$.
THEN for any $\epsilon > 0$, 
$\mu^{k}(\{(p_{1},..,p_{k}): |Fr_{k}({\bar p},Y) - \mu(Y)| < \epsilon\}) \to 1$ as $k \to \infty$.
\end{Fact}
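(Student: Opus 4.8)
The plan is to run the textbook proof of the weak law: express $Fr_k(\bar p,Y)$ as the empirical average of $k$ independent Bernoulli random variables and apply Chebyshev's inequality. Write $m=\mu(Y)$. For $1\le i\le k$ let $\pi_i\colon X^k\to X$ be the $i$-th coordinate projection and put $\chi_i=\mathbf{1}_{Y}\circ\pi_i$, the indicator of $\{\bar p: p_i\in Y\}$. Since $Y$ is $\mu$-measurable, each $\chi_i$ is $\mu^k$-measurable, hence so is $Fr_k(\bar p,Y)=\tfrac{1}{k}\sum_{i=1}^k\chi_i(\bar p)$; this is all that is needed to make sense of the sets appearing in the statement.

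First I would record the first two moments. By the definition of the product measure (Fubini), $\int\chi_i\,d\mu^k=\int\chi_i^2\,d\mu^k=m$, so $\chi_i$ has variance $m(1-m)\le 1/4$; and for $i\ne j$, $\int\chi_i\chi_j\,d\mu^k=m^2$, so the $\chi_i$ are pairwise uncorrelated. Hence $\int Fr_k\,d\mu^k=m$ and
\[
\int \bigl(Fr_k(\bar p,Y)-m\bigr)^2\,d\mu^k \;=\;\frac{1}{k^2}\sum_{i=1}^k\bigl(m-m^2\bigr)\;=\;\frac{m(1-m)}{k}\;\le\;\frac{1}{4k}.
\]

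Then I would apply Chebyshev's/Markov's inequality: for any $\epsilon>0$,
\[
\mu^k\bigl(\{\bar p:\ |Fr_k(\bar p,Y)-\mu(Y)|\ge\epsilon\}\bigr)\;\le\;\frac{1}{\epsilon^2}\int\bigl(Fr_k(\bar p,Y)-m\bigr)^2\,d\mu^k\;\le\;\frac{1}{4k\epsilon^2}.
\]
Passing to the complement gives $\mu^k(\{\bar p:\ |Fr_k(\bar p,Y)-\mu(Y)|<\epsilon\})\ge 1-\tfrac{1}{4k\epsilon^2}$, which tends to $1$ as $k\to\infty$, as claimed.

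There is essentially no obstacle: the only points needing a word are the measurability of $Fr_k$ (immediate from measurability of $Y$ and of the projections) and the variance bookkeeping (immediate from the product structure of $\mu^k$ via Fubini). One could equally just cite any standard probability text, as the excerpt already notes; the argument above is the short self-contained version.
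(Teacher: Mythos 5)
Your proof is correct and is the standard Chebyshev/second-moment argument for the weak law of large numbers; the paper gives no proof at all for this Fact, merely citing "any basic text on probability theory," and your self-contained version is exactly the textbook argument such a reference would supply.
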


\vspace{5mm}
\noindent
Thanks to the Wroclaw model theory group, in particular H. Petrykowski, for pointing out some errors in an early version of \cite{NIPII}, which we deal with in section 5 of the current paper. Some of the results in sections 2 and 3 of the present paper appear in the third authors Master's Thesis \cite{Simon}. However we do not follow the ``formal points" formalism from there.

\section{Smooth measures and indiscernibles}
Here we discuss smooth measures, using and repeating some material from Keisler's paper \cite{Keisler1}, but also applying the results in the context of $NIP$ theories to obtain useful results about arbitrary measures as well as ``indiscernible measures". 

We will NOT make a blanket assumption that $T$ has $NIP$. 
\begin{Definition}  A global Keisler measure $\mu_{x}$ is said to be smooth if $\mu$ is the unique global extension of $\mu|M$
for some small model $M$. We may also call $\mu$ smooth over $M$, and also call $\mu|M$ smooth. 
\end{Definition}

We should mention that Keisler's notion of a smooth measure was somewhat weaker. He called a Keisler measure over a small set (or even a fragment) if it had a unique global extension modulo the ``stable part". Possibly ``minimal" might be a better expression for us, but we stick with our Definition above. 
A key result of Keisler is Theorem 3.16 from \cite{Keisler1}:

\begin{Lemma} (Assume $T$ has $NIP$.) If $\mu_{x}$ is a Keisler measure over $M$ then it has an extension to a smooth global Keisler measure.
\end{Lemma}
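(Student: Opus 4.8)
The plan is to fix once and for all a global Keisler measure $\mu$ extending $\mu|M$, and then to enlarge $M$ to a small model $N$ over which $\mu$ becomes the \emph{unique} global extension; the enlargement will be driven by Keisler's finite approximation lemma as proved in the introduction. Since two global measures coincide as soon as they agree on every formula $\phi(x,b)$ with $b\in{\bar M}$, it is enough to produce a small $N\supseteq M$ such that for each formula $\phi(x,y)$ and each $b$, \emph{all} global extensions of $\mu|N$ assign the same value to $\phi(x,b)$; i.e.\ no instance $\phi(x,b)$ is ``ambiguous'' over $N$.

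The first step is to reformulate non-ambiguity. Fix a small $N$, a formula $\phi(x,y)$ and $b\in{\bar M}$, let $\pi\colon S_{x}({\bar M})\to S_{x}(N)$ be the restriction map, and for a formula $\chi$ over ${\bar M}$ write $\pi[\chi]\subseteq S_{x}(N)$ for the image of the clopen set it defines; $\pi[\chi]$ is closed, as $\pi$ is continuous and $S_{x}({\bar M})$ compact. Put $\partial_{\phi,b}=\pi[\phi(x,b)]\cap\pi[\neg\phi(x,b)]$, the closed set of complete types over $N$ which fail to decide $\phi(x,b)$. One checks easily that $\pi^{-1}\bigl(S_{x}(N)\setminus\pi[\neg\phi(x,b)]\bigr)\subseteq[\phi(x,b)]\subseteq\pi^{-1}\bigl(\pi[\phi(x,b)]\bigr)$, and since any global extension $\lambda$ of $\mu|N$ agrees with $\mu|N$ on Borel-over-$N$ sets, $\lambda(\phi(x,b))$ is trapped in the interval with endpoints $\mu|N\bigl(S_{x}(N)\setminus\pi[\neg\phi(x,b)]\bigr)$ and $\mu|N(\pi[\phi(x,b)])$, whose length is exactly $\mu|N(\partial_{\phi,b})$. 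The same holds for $\mu$ itself. Hence it suffices to find a small $N\supseteq M$ with $\mu|N(\partial_{\phi,b})=0$ for every $\phi$ and every $b$.

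For the construction, let $\Phi$ be the smallest family of formulas $\psi(x,\bar w)$ containing every $L$-formula $\phi(x,\bar y)$ and closed under the Boolean operations performed \emph{in the single variable $x$} (so $\phi(x,\bar y)\wedge\neg\phi(x,\bar z)\in\Phi$, and likewise iterated combinations); then $|\Phi|=|L|+\aleph_{0}$ and each $\psi\in\Phi$ has finitely many free variables. By the finite approximation lemma, for each $\psi\in\Phi$ and each $m$ there is a \emph{finite} set $E_{\psi,m}$ of parameters, independent of the instance, such that every $\psi(x,\bar c)$ satisfies $\mu(\psi(x,\bar c)\triangle\psi(x,\bar e))<2^{-m}$ for some $\bar e\in E_{\psi,m}$. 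Let $N\supseteq M$ be a small model containing $\bigcup_{\psi,m}E_{\psi,m}$; it is small since we adjoin only finitely many elements per pair $(\psi,m)$. To bound $\mu|N(\partial_{\phi,b})$: given $\phi,b,\epsilon$, choose $\bar e\in N$ with $\mu(\phi(x,b)\triangle\phi(x,\bar e))<\epsilon$; a type over $N$ in $\partial_{\phi,b}$ decides the $N$-formula $\phi(x,\bar e)$ but not $\phi(x,b)$, hence has a realization in $\phi(x,b)\triangle\phi(x,\bar e)$, so $\partial_{\phi,b}\subseteq\pi[\phi(x,b)\triangle\phi(x,\bar e)]$ and $\mu|N(\partial_{\phi,b})\le\mu|N\bigl(\pi[\phi(x,b)\triangle\phi(x,\bar e)]\bigr)$. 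Now $\phi(x,b)\triangle\phi(x,\bar e)$ is again an instance of a formula of $\Phi$ of small $\mu$-measure, so it too admits an $N$-approximant; iterating this estimate (and carefully accounting for the ever-shrinking leftovers, all of which are $\mu$-null instances of formulas in $\Phi$) is meant to force $\mu|N(\partial_{\phi,b})$ below any prescribed bound, hence $\mu|N(\partial_{\phi,b})=0$.

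The main obstacle is precisely this last iteration. The difficulty is that $d_{\mu}$-smallness of a formula $\psi(x,c)$ does not, a priori, control the $\mu|N$-measure of its projection $\pi[\psi(x,c)]\subseteq S_{x}(N)$ — roughly, the boundary term can reappear at each stage — so one is forced to work inside the Boolean-in-$x$ closure $\Phi$, to choose $N$ rich enough to be ``closed'' under the operations that arise while remaining small, and to verify that the resulting (doubly indexed) system of estimates really telescopes to zero. Carrying out this bookkeeping is exactly the content of Keisler's Theorem~3.16 in \cite{Keisler1}; by contrast the reduction to one formula and the ``boundary'' reformulation above are soft, the $NIP$ hypothesis enters only through the finite approximation lemma (i.e.\ bounded $VC$-type behaviour of definable families under a measure), and the remaining ingredients are general measure theory on Stone spaces.
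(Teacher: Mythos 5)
The paper itself offers no proof of this lemma: it is stated as a direct quotation of Keisler's Theorem~3.16 from \cite{Keisler1}, so there is no ``paper's own proof'' to compare against. Your proposal is therefore a genuine attempt at a proof from scratch, and you are right to be suspicious of it at the end: there is a real gap, and the iteration you describe in fact goes in a circle rather than telescoping.

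The reformulation is sound: one wants a small $N\supseteq M$ so that $\mu|N(\partial_{\phi,b})=0$ for all $\phi$, $b$, where $\partial_{\phi,b}=\pi[\phi(x,b)]\cap\pi[\neg\phi(x,b)]\subseteq S_x(N)$. The problem is the estimate at the heart of the iteration. Fix $\bar e\in N$ with $\mu(\chi)<\epsilon$, where $\chi=\phi(x,b)\triangle\phi(x,\bar e)$. You correctly observe $\partial_{\phi,b}\subseteq\pi[\chi]$. But now split $\pi[\chi]$ into the types over $N$ that actually \emph{imply} $\chi$ (this Borel set has $\mu|N$-measure at most $\mu(\chi)<\epsilon$) and the types consistent with $\chi$ but not deciding it, i.e.\ $\partial_\chi$. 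Since $\bar e\in N$, any type over $N$ decides $\phi(x,\bar e)$, and given that, it decides $\chi$ if and only if it decides $\phi(x,b)$; so $\partial_\chi=\partial_{\phi,b}$ exactly. The resulting inequality is $\mu|N(\partial_{\phi,b})\le\epsilon+\mu|N(\partial_{\phi,b})$, which is vacuous. Replacing $\chi$ by a still-smaller symmetric difference reproduces the same boundary set and gives nothing; the ``ever-shrinking leftovers'' do not shrink the obstruction. More conceptually, the finite approximation lemma is a weak consequence of $NIP$ and controls $\mu$-measures of formulas, whereas what needs controlling here is the $\mu|N$-mass of a \emph{closed} boundary set in $S_x(N)$, which is a property of the pushforward $\pi_*\mu$ and is not bounded by $\mu$-measures of representing formulas.

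Keisler's actual argument is of a different nature: rather than fixing a single ``net-closed'' model in advance, one builds a chain of models and compatible measures and shows, by an alternation/indiscernibility argument using $NIP$ directly (in the spirit of the alternation-rank argument the introduction of this paper rehearses for the finite approximation lemma itself, and of the later Lemma~2.10), that persistent ambiguity of some $\phi(x,b)$ along the chain would produce an indiscernible sequence contradicting $NIP$. The ``boundary'' reformulation you give is a reasonable way to phrase the target, but the mechanism that forces $\mu|N(\partial_{\phi,b})$ to vanish has to come from an alternation bound along an indiscernible sequence of parameters, not from iterated $\epsilon$-nets inside a fixed $N$. As written, your argument reduces the lemma to itself and would need to be replaced by that different mechanism.
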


Note that a complete type (over $M$, or ${\bar M}$) is smooth iff it is realized (in $M$, ${\bar M}$) respectively. A key point of the current paper (also implicit in \cite{NIPII}), is that in an $NIP$ context, one can usefully view a smooth extension of $\mu$ as a ``realization" of $\mu$, and thus deal effectively with technical issues around measures.

\begin{Lemma} Suppose $\mu_{x}$ is smooth over $M$. Let $\phi(x,y)\in L$ and $\epsilon> 0$. Then there are formulas
$\nu_{i}^{1}(x), \nu_{i}^{2}(x)$ for $i=1,..,n$ and $\psi_{i}(y)$ for $i=1,..,n$, all over $M$ such that
\newline
(i) the formulas $\psi_{i}(y)$ partition $y$-space,
\newline
(ii) for all $i$, if $\models \psi_{i}(b)$, then $\models \nu_{i}^{1}(x) \rightarrow \phi(x,b)\rightarrow \nu_{i}^{2}(x)$, and
\newline
(iii) for each $i$, $\mu(\nu_{i}^{2}(x)) - \mu(\nu_{i}^{1}(x)) < \epsilon$.
\end{Lemma}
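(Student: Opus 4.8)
The plan is to exploit smoothness of $\mu$ through the approximation-by-finitely-many-formulas phenomenon, applied not to a single formula but to the whole family $\phi(x,b)$ as $b$ ranges over $y$-space, and in a two-sided way. First I would note that by smoothness and $NIP$ (or rather the basic fact recalled in the introduction, which holds under $NIP$), for the formula $\phi(x,y)$ and the given $\epsilon$ there is a finite collection of formulas $\phi(x,c_1),\ldots,\phi(x,c_k)$ over $M$ — actually I want them over $M$, which is where smoothness enters — such that every instance $\phi(x,b)$ is within $\epsilon/2$ in $\mu$-measure of one of them. The key extra input from smoothness (as opposed to mere $NIP$) is that one can simultaneously find, over $M$, formulas $\theta^1(x),\theta^2(x)$ over $M$ with $\models\theta^1(x)\to\phi(x,b)\to\theta^2(x)$ and $\mu(\theta^2)-\mu(\theta^1)<\epsilon$: this is exactly the statement that the ``gap'' between an inner $M$-definable approximation and an outer one can be made small, which is the content of $\mu$ being the unique global extension of $\mu|M$. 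Concretely, if no such sandwich existed for some $b$, one could, by compactness, build a global extension of $\mu|M$ disagreeing with $\mu$ on (a formula over ${\bar M}$ entailed between) $\phi(x,b)$, contradicting smoothness.

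Next I would make this sandwiching uniform in $b$. For each $b$ pick $\theta^1_b,\theta^2_b$ over $M$ as above; the conditions ``$\models\theta^1(x)\to\phi(x,b)\to\theta^2(x)$'' and ``$\mu(\theta^2)-\mu(\theta^1)<\epsilon$'' are, for fixed formulas $\theta^1,\theta^2\in L(M)$, a condition on $b$ defined by a formula over $M$ (the implication part) intersected with a fixed constraint not involving $b$. Since there are only countably many — indeed, for a fixed pair of $L$-formula shapes, a definable family of — candidate pairs $(\theta^1,\theta^2)$, and every $b$ is covered by at least one, a compactness argument gives finitely many pairs $(\nu^1_i,\nu^2_i)$, $i=1,\ldots,n$, over $M$, and formulas $\psi_i(y)$ over $M$ (where $\psi_i$ says ``$\nu^1_i$ and $\nu^2_i$ sandwich $\phi(x,\cdot)$ and have $\mu$-gap $<\epsilon$'', the latter being vacuous or a side condition), whose union is all of $y$-space. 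Finally, replace the $\psi_i$ by $\psi_i' = \psi_i \wedge \neg\bigvee_{j<i}\psi_j$ to make them pairwise disjoint; they still partition $y$-space, (ii) is preserved since $\psi_i'\to\psi_i$, and (iii) holds by construction. This yields exactly the claimed data.

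The main obstacle I anticipate is establishing the two-sided $M$-sandwich with small gap — i.e. getting the approximating formulas $\nu^1_i,\nu^2_i$ over $M$ and not merely over some larger model, together with the measure bound (iii). This is precisely where smoothness must be used in an essential way, and it should be extracted from a Hahn–Banach / compactness argument: the set of finitely additive measures extending $\mu|M$ is a compact convex set which by smoothness is the singleton $\{\mu\}$, and for any formula $\psi(x)$ over ${\bar M}$ one has $\sup\{\mu(\theta^1): \theta^1\in L(M),\ \models\theta^1\to\psi\} = \inf\{\mu(\theta^2): \theta^2\in L(M),\ \models\psi\to\theta^2\}$, both equal to the common value $\mu(\psi)$ (where on the left $\mu$ is read as its unique global extension). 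Applying this with $\psi(x) = \phi(x,b)$ for each $b$ and then compactifying over $b$ is the heart of the argument; the partitioning and disjointification at the end are routine.
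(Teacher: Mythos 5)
Your proof is correct and follows essentially the same route as the paper: for each $b$ use smoothness to obtain an $M$-definable sandwich $\nu^1(x)\rightarrow\phi(x,b)\rightarrow\nu^2(x)$ with $\mu$-gap $<\epsilon$, then compactness yields finitely many pairs covering all of $y$-space, and the $\psi_i$ are obtained by disjointification. The opening detour via the VC-style finitely-many-instances fact is superfluous (and invokes NIP, which Lemma 2.3 does not assume); everything you need is contained in your ``key extra input from smoothness'' paragraph, i.e.\ the inner/outer-approximation argument showing $\sup\{\mu(\theta^1):\theta^1\in L(M),\ \theta^1\rightarrow\phi(x,b)\}=\inf\{\mu(\theta^2):\theta^2\in L(M),\ \phi(x,b)\rightarrow\theta^2\}$, which is precisely what the paper extracts from Keisler's Lemma~1.3(iv).
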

\begin{proof} By smoothness of $\mu$ and Lemma 1.3 (iv) of \cite{Keisler1} for example, for each $b\in {\bar M}$ there are formulas $\nu^{1}(x)$, $\nu^{2}(x)$ over $M$, such that
\newline
(*)
$\models \nu^{1}(x) \rightarrow \phi(x,b) \rightarrow \nu^{2}(x) $, and $\mu(\nu^{2}(x))- \mu(\nu^{1}(x)) < \epsilon$. 
\newline
By compactness, there are
finitely many such pairs, say, $(\nu_{i}^{1}(x), \nu_{i}^{2}(x))$ such that for every $b$ one of these pairs satisfies (*). 
It is then easy to find the $\psi_{i}(y)$. 
\end{proof}

Note that it follows from Lemma 2.3 that if $\mu$ is a global smooth Keisler measure, then $\mu$ is smooth over some model $M_{0}$ of cardinality at most $|T|$. Note also that Lemma 2.3 yields a direct way of seeing both the definability over $M$ and finite satisfiability in $M$ of $\mu$.

\begin{Definition}
If $\mu_{x}$ and $\lambda_{y}$ are both Keisler measures over $M$ (with $x,y$ disjoint tuples of variables), then a Keisler measure
$\omega_{x,y}$ over $M$ extending both $\mu_{x}$ and $\lambda_{y}$ is said to be a {\em separated} amalgam of $\mu_{x}$ and $\lambda_{y}$, if for any formulas $\phi(x), \psi(y)$ over $M$, $\omega(\phi(x)\wedge \psi(y)) = \mu(\phi(x))\cdot\lambda(\psi(y))$. 
\end{Definition}

This is the same thing as saying that $\omega_{x,y}$, as a regular Borel probability measure on $S_{xy}(M)$ extends the product measure $\mu_{x} \times \lambda_{y}$ on the space $S_{x}(M)\times S_{y}(M)$. Note that as soon as $\mu_{x}$ is not a complete type, there will  be at least two extensions of $\mu_{x}\cup\mu_{y}$ to Keisler measures over $M$; one giving $x=y$ measure $1$, which will not be separated, and one extending the product $\mu_{x}\times \mu_{y}$ which
will be separated.  On the other hand if $\mu_{x}$ is a measure over $M$ and $q(y)$ a complete type over $M$ then any amalgam $\omega_{xy}$ of $\mu$ and $q$ will be separated.
\newline

We now give several corollaries of Lemma 2.3.

\begin{Corollary} Suppose $\mu_{x}$ is a smooth global Keisler measure. Then for any global Keisler measure $\lambda_{y}$, there is a unique separated amalgam of $\mu_{x}$ and $\lambda_{y}$.
\end{Corollary}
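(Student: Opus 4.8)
The plan is to prove existence and uniqueness separately, both leveraging Lemma 2.3 to reduce questions about the amalgam to finitary, $\epsilon$-level data controlled by the smooth measure $\mu$.

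First I would establish uniqueness. Suppose $\omega_{xy}$ and $\omega'_{xy}$ are two separated amalgams of $\mu_x$ and $\lambda_y$. It suffices to show $\omega(\chi) = \omega'(\chi)$ for every formula $\chi(x,y)$ over $\bar M$, since both are regular Borel measures on $S_{xy}(\bar M)$. Write $\chi$ as $\phi(x,y,c)$, exhibiting parameters $c$, and think of it via a formula $\phi(x,y')$ with $y'$ absorbing $c$. Apply Lemma 2.3 to $\mu_x$ with the formula $\phi$ (in the pair of variables $(x,y')$) and a given $\epsilon > 0$: we obtain formulas $\nu_i^1(x), \nu_i^2(x)$ over some small model $M$ over which $\mu$ is smooth, and $\psi_i(y')$ partitioning $y'$-space, such that $\models \psi_i(b) \to (\nu_i^1(x) \to \phi(x,b) \to \nu_i^2(x))$ and $\mu(\nu_i^2) - \mu(\nu_i^1) < \epsilon$. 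Here I need $M$ to contain the parameters $c$, which is fine since Lemma 2.3 works over any model of smoothness and I can enlarge $M$; alternatively one works uniformly in the parameter. Now for any amalgam $\omega$ extending both measures and separated over $M$, we have, summing over $i$,
$$\sum_i \mu(\nu_i^1(x)) \cdot \lambda(\psi_i(y')) \;\le\; \omega\big(\phi(x,y')\big) \;\le\; \sum_i \mu(\nu_i^2(x)) \cdot \lambda(\psi_i(y')),$$
because on the piece $\psi_i(y')$ the set $\phi$ is squeezed between $\nu_i^1$ and $\nu_i^2$, and on each rectangle $\nu_i^j(x) \wedge \psi_i(y')$ the separated amalgam assigns the product value — here I must be slightly careful that $\psi_i(y')$ is a formula over $M$ so separatedness applies directly, and that the $\psi_i$ partition so the rectangle estimates add up correctly. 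The upper and lower bounds differ by at most $\sum_i \epsilon \cdot \lambda(\psi_i(y')) = \epsilon$. Hence $\omega(\phi(x,y'))$ is determined to within $\epsilon$ by $\mu$ and $\lambda$ alone, independently of the choice of amalgam; letting $\epsilon \to 0$ gives $\omega(\phi) = \omega'(\phi)$.

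For existence, the cleanest route is to define $\omega(\phi(x,y'))$ to be the common limit of the sandwiching sums above as $\epsilon \to 0$ (one checks the sums form a Cauchy net, or simply takes $\sup$ of lower bounds $=\inf$ of upper bounds), verify finite additivity and $\omega(1)=1$ directly from the construction, so that $\omega$ is a genuine Keisler measure over $\bar M$; then check it extends $\mu_x$ (take $\phi$ not depending on $y'$: the partition becomes trivial and the bounds pinch to $\mu(\phi)$), extends $\lambda_{y'}$ similarly, and is separated (for $\phi = \phi_0(x) \wedge \psi_0(y')$ with both over $\bar M$, approximate and compute). Alternatively, and perhaps more slickly: since $\mu_x$ is smooth it is definable, hence the product $\mu_x \otimes \lambda_{y'}$ is defined (via the integration recipe recalled in the introduction), and one verifies this product is a separated amalgam — on a rectangle $\phi_0(x)\wedge\psi_0(y')$ the function $q \mapsto \mu(\phi_0(x)\wedge\psi_0(b))$ is $\mu(\phi_0)$ when $\psi_0\in q$ and $0$ otherwise, so integrating against $\lambda$ gives $\mu(\phi_0)\cdot\lambda(\psi_0)$. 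I would present the $\mu_x \otimes \lambda_{y'}$ construction for existence since it requires no new estimates.

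The main obstacle is the bookkeeping in the uniqueness argument: making sure the model $M$ supporting the Lemma 2.3 data can be taken to contain whatever parameters occur in the target formula $\chi$ while $\mu$ remains smooth over it (guaranteed since $\mu$ is smooth over some $M_0$ of size $\le |T|$ and smoothness persists under adding small parameters to the base — or rather, one reproves Lemma 2.3 with those parameters allowed in the $\nu_i^j$ as well), and checking that the rectangle decomposition genuinely reduces $\omega(\phi)$ to products over the partition pieces. Everything else is a routine squeeze. I do not expect the separatedness verification or the additivity checks to cause trouble.
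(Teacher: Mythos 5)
Your uniqueness argument coincides with the paper's proof: apply Lemma 2.3 to $\phi$ (with parameters absorbed into the $y$-variable) and squeeze $\omega(\phi)$ between $\sum_i r_i t_i$ and $\sum_i (r_i+\epsilon)t_i$ using separatedness on the rectangles $\nu_i^j(x)\wedge\psi_i(y)$, and your existence route via $\mu_x\otimes\lambda_y$ (legitimate since smoothness gives definability by Lemma 2.3) is exactly the remark the paper makes immediately after the corollary. So the proposal is correct and takes essentially the same approach as the paper.
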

\begin{proof} Assume $\mu$ is smooth over $M$. Let $\omega_{x,y}$ be such an amalgam. Let $\phi(x,y)\in L$ and $\epsilon > 0$. Let $\nu_{i}^{1}(x)$, $\nu_{i}^{2}(x)$, $\psi_{i}(y)$, for $i=1,..,n$ be as given by Lemma 2.3. Then for each $i$, 
$\models \nu_{i}^{1}(x)\wedge\psi_{i}(y) \rightarrow \phi(x,y)\wedge \psi_{i}(y) \rightarrow \nu_{i}^{2}(x)\wedge \psi_{i}(y)$. Let 
$r_{i} = \mu(\nu_{i}^{1}(x))$ and $t_{i} = \lambda(\psi_{i}(y))$. It follows from the assumptions that
$\sum_{i}r_{i}t_{i}  \leq \omega_{xy}(\phi(x,y)) \leq  \sum_{i}(r_{i}+\epsilon)t_{i} = \sum_{i}r_{i}t_{i} + \epsilon$. 
\newline
Hence $\omega_{xy}(\phi(x,y))$ is determined. 
\end{proof}

\noindent
Note in particular that a smooth measure $\mu_{x}$ has a unique amalgam with any complete type. Note also that if $\mu_{x}$ is a Borel definable global measure, and $\lambda_{y}$ arbitrary then  $\mu_{x}\otimes \lambda_{y}$ is a separated amalgam. It follows that if $\mu_{x}$ is smooth and $\lambda_{y}$ is Borel definable (in the $NIP$ case, invariant) then $\mu_{x}\otimes \lambda_{y} = \lambda_{y}\otimes \mu_{x}$. Namely a smooth measure ``commutes" with any  other invariant measure.

\begin{Corollary} Suppose $\mu_{x}$ is smooth over $M$. Let $\phi(x,y)\in L$
and $X_{1},..,X_{k}$ a finite collection of Borel over $M$ sets . Then for any $\epsilon >0$, for all sufficiently large $m$, there is a formula $\theta_m(x_1,..,x_m)$ such that $lim_{m\rightarrow +\infty}\mu^{(m)}(\theta_m)=1$ and for any $(a_1,..,a_m) \models \theta_m$,
\newline
(i) for each $b\in {\bar M}$, $\mu(\phi(x,b))$ is within $\epsilon$ of $Fr(\phi(x,b), a_{1},..,a_{m})$.
\newline
Also, we can find such $(a_1,..,a_m)$ such that furthermore:
\newline
(ii) $\mu(X_{i})$ is within $\epsilon$ of $Fr(X_{i},a_{1},..,a_{m})$ for each $i$, and
\newline
(iii) for each $b\in {\bar M}$, $\mu(X_{i}\cap\phi(x,b))$ is within $\epsilon$ of 
$Fr(X_{i}\cap\phi(x,b),a_{1},..,a_{m})$.
\end{Corollary}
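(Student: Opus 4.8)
The plan is to reduce the whole statement to controlling, uniformly in $b$, the frequencies of a \emph{fixed finite} list of $M$-definable (and Borel-over-$M$) sets, and then to invoke the weak law of large numbers, Fact 1.9. First I would apply Lemma 2.3 to $\phi(x,y)$ and $\epsilon/2$, obtaining formulas $\nu_l^1(x),\nu_l^2(x),\psi_l(y)$ over $M$ for $l=1,\dots,n$; for every $b$ there is $l=l(b)$ with $\models\psi_{l(b)}(b)$, so that $\nu_{l(b)}^1(x)\subseteq\phi(x,b)\subseteq\nu_{l(b)}^2(x)$ and $\mu(\nu_{l(b)}^2)-\mu(\nu_{l(b)}^1)<\epsilon/2$. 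Writing $\bar a=(a_1,\dots,a_m)$, the inclusions give the sandwich $Fr(\nu_{l(b)}^1,\bar a)\le Fr(\phi(x,b),\bar a)\le Fr(\nu_{l(b)}^2,\bar a)$ (and the analogous sandwich for $\mu$), so a one-line computation shows that if $|Fr(\nu_l^j,\bar a)-\mu(\nu_l^j)|<\epsilon/2$ for all $l\le n$, $j\le 2$, then $|Fr(\phi(x,b),\bar a)-\mu(\phi(x,b))|<\epsilon$ for \emph{every} $b$; i.e. $\bar a$ satisfies (i). Since the $\nu_l^j$ are over $M$, the condition ``$|Fr(\nu_l^j;x_1,\dots,x_m)-\mu(\nu_l^j)|<\epsilon/2$'' is expressed by a formula over $M$ (a finite disjunction, over the admissible subsets $S\subseteq\{1,\dots,m\}$, of $\bigwedge_{i\in S}\nu_l^j(x_i)\wedge\bigwedge_{i\notin S}\neg\nu_l^j(x_i)$), and I take $\theta_m(x_1,\dots,x_m)$ to be the conjunction of these finitely many formulas; it is nontrivial once $m$ is large.

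Next I would compute $\mu^{(m)}(\theta_m)$. The key observation is that $\mu^{(m)}$ agrees with the $m$-fold product measure $(\mu|M)^{\otimes m}$ on the Boolean algebra generated by the single-variable sets $\delta(x_i)$ with $\delta$ over $M$ (via the restriction map $S_{(x_1,\dots,x_m)}(\bar M)\to S_x(M)^m$): evaluating $\mu_{x_{n+1}}\otimes\mu^{(n)}$ on a rectangle $\bigwedge_{i\le n+1}\delta_i(x_i)$ by the integral defining $\otimes$, the integrand is $\mu(\delta_{n+1})$ times the indicator of $\bigwedge_{i\le n}\delta_i(x_i)$, so $\mu^{(n+1)}(\bigwedge_{i\le n+1}\delta_i(x_i))=\mu(\delta_{n+1})\cdot\mu^{(n)}(\bigwedge_{i\le n}\delta_i(x_i))$; induction gives $\prod_{i\le n+1}\mu(\delta_i)$, and additivity extends this to the whole algebra. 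Since $\theta_m$ lies in this algebra, $\mu^{(m)}(\theta_m)=(\mu|M)^{\otimes m}(\theta_m)$, and Fact 1.9 applied on $S_x(M)$ with measure $\mu|M$ to each of the finitely many $\nu_l^j$ (intersecting the resulting good sets) yields $\mu^{(m)}(\theta_m)\to 1$ as $m\to\infty$. This establishes the first assertion.

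For (ii) and (iii) the point is that the $X_i$ are only Borel, not definable, so they cannot be built into $\theta_m$; instead I would produce suitable $\bar a$ directly. Let $\mathcal C$ be the finite family of Borel-over-$M$ sets consisting of the $\nu_l^j$, the $X_i$, and the intersections $X_i\cap\nu_l^j$, each viewed as a Borel subset of $S_x(M)$. By Fact 1.9 (measure $\mu|M$ on $S_x(M)$, tolerance $\epsilon/2$), the set $W_m$ of $(p_1,\dots,p_m)\in S_x(M)^m$ with $|Fr(Y;\bar p)-\mu(Y)|<\epsilon/2$ for all $Y\in\mathcal C$ has $(\mu|M)^{\otimes m}(W_m)\to 1$, so $W_m\neq\emptyset$ for all large $m$; picking $(p_1,\dots,p_m)\in W_m$ and $a_i\models p_i$ in $\bar M$, the tuple $\bar a$ satisfies $|Fr(Y,\bar a)-\mu(Y)|<\epsilon/2$ for every $Y\in\mathcal C$ (frequencies of sets over $M$ depend only on the $p_i$). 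Then $\bar a\models\theta_m$, so (i) holds; (ii) holds since $X_i\in\mathcal C$; and (iii) follows from the sandwich $X_i\cap\nu_{l(b)}^1\subseteq X_i\cap\phi(x,b)\subseteq X_i\cap\nu_{l(b)}^2$ together with $\mu(X_i\cap\nu_{l(b)}^2)-\mu(X_i\cap\nu_{l(b)}^1)\le\mu(\nu_{l(b)}^2)-\mu(\nu_{l(b)}^1)<\epsilon/2$, giving $|Fr(X_i\cap\phi(x,b),\bar a)-\mu(X_i\cap\phi(x,b))|<\epsilon$. The only slightly delicate points are the identification of $\mu^{(m)}$ with the genuine product measure on the relevant subalgebra (needed so that Fact 1.9 governs $\mu^{(m)}(\theta_m)$) and bookkeeping of which tolerances to halve; both are routine once the sandwich coming from Lemma 2.3 is in place.
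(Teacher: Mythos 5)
Your proof is correct and follows essentially the same route as the paper: the Lemma 2.3 sandwich reduces everything to frequencies of finitely many $M$-definable sets (and their intersections with the $X_i$), the fact that $\mu^{(m)}$ is a separated amalgam identifies $\mu^{(m)}(\theta_m)$ with the product measure $(\mu|M)^{\otimes m}$ of the corresponding set, and the weak law of large numbers finishes both the $\theta_m$ claim and the existence of a tuple witnessing (ii) and (iii). The only cosmetic differences are your tolerance $\epsilon/2$ in place of the paper's $\epsilon/4$ and your explicit verification of the product-measure identity, which the paper simply quotes from the remark following Corollary 2.5.
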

\begin{proof} Note first that we can assume $x=x$ is an instance of $\phi(x,y)$ and that $x=x$ is included among the $X_i$. Hence (iii) implies (i) and (ii). 

Let $\nu_{i}^{1}(x), \nu_{i}^{2}(x)$ (over $M$) for $i=1,..,n$ say be given by Lemma 2.3 for $\phi(x,y)$ and $\epsilon/4$. Consider the formula $\theta_m(x_1,..,x_m)$ that expresses that $Fr(\nu_i^j(x),x_1,..,x_m)$ is within $\epsilon/4$ of $\mu(\nu_i^j(x))$ for each $i$ and $j$.
The weak law of large numbers, Fact 1.8, applied to $X = S_{x}(M)$, $\mu_{x}|M$ (as a probability measure on $X$), to the Borel sets $\nu_{i}^{j}(x)$ (all $i,j$), and $\epsilon/4$ implies that $lim_{m\rightarrow +\infty}\mu^{(m)}(\theta_m(x_1,..,x_m)) = 1$. (We here use the fact that $\mu_{(x_1,..,x_m)}^{(m)}$ is a separated amalgam of $\mu_{x_1},..,\mu_{x_m}$.)

Next apply the weak law of large numbers, this time to the Borels $\nu_i^j \cap X_r$ (all $i,j,r$) and $\epsilon/4$ to obtain suitable types $p_{1},...,p_{m}\in S_{x}(M)$. Let $a_{1},...,a_{m}$ be realizations of $p_{1},..,p_{m}$ respectively.
Let $\lambda_{x}$ be the average of the $tp(a_{i}/{\bar M})$. Let $b\in {\bar M}$, and $i$ be such that
$\nu_{i}^{1}(x) \rightarrow \phi(x,b) \rightarrow \nu_{i}^{2}(x)$. Then also
for each $r$, 
$\nu_{i}^{1}(x)\wedge x\in X_{r} \rightarrow \phi(x,b)\wedge x\in X_{r} \rightarrow \nu_{i}^{2}(x)\wedge x\in X_{r}$. Also 
clearly $|(\mu(\nu_{i}^{2}(x)\cap X_{r}) - \mu(\nu_{i}^{1}(x)\cap X_{r})|<\epsilon/4$. 

 Now $\lambda(\nu_{i}^{j}(x)\cap X_{r})$ is within $\epsilon/4$ of 
$\mu(\nu_{i}^{j}(x)\cap X_{r})$
for $j=1,2$ from which it follows that  $\lambda(\phi(x,b)\cap X_{r})$ is within $\epsilon$ of $\mu(\phi(x,b)\cap X_{r})$ giving (iii) (so also (i) and (ii), for (i); note that only the fact that $(a_1,..,a_m)\models \theta_m$ is needed).
\end{proof}

\begin{Definition} We will call a global Borel definable Keisler measure $\mu_{x}$ $fim$  (a ``frequency interpretation measure") if: for every $\phi(x,y)\in L$, and $\epsilon>0$, for arbitrary sufficiently large $m$, there is $\theta_m(x_1,..,x_m)$ (with parameters) such that :
\newline
(i) $lim_{m\rightarrow +\infty} \mu^{(m)}(\theta_m) = 1$,
\newline(ii) for all $(a_1,..,a_m) \models \theta_m(x_1,..,x_m)$, $\mu(\phi(x,b))$ is within $\epsilon$ of
$Fr(\phi(x,b),a_1,..,a_m)$. 
\end{Definition}

So we have seen that smooth measures are $fim$.

Note that it follows, as in Corollary 2.6, that for any $fim$ measure, any $\phi(x,y)$ and any Borel set $X$,  we can find $(a_1,..,a_m)$ such that (i) and (ii) of Corollary 2.6 hold. We will see later on that we can also have (iii).

Corollary 2.6   plus Lemma 2.2 enables us to directly prove something about arbitrary measures, for which in \cite{NIPII} we used the Vapnik-Chervonenkis Theorem.

\begin{Corollary} (Assume $T$ has $NIP$.) Let $\mu_{x}$ be any measure over $M$. Let $\phi(x,y)\in L$, $\epsilon > 0$, and let $X_{1},..,X_{k}$ be Borel sets over $M$. THEN for all large enough $n$ there are $a_{1},..,a_{n}$ such that  
for all $r=1,..k$ and all $b\in M$, $\mu(X_{r}\cap \phi(x,b))$ is within $\epsilon$ of
$Fr(X_{r}\cap\phi(x,b),a_{1},..,a_{n})$.
\end{Corollary}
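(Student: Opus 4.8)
The plan is to reduce the statement for an arbitrary measure to the smooth case already handled in Corollary 2.6. First I would apply Keisler's extension theorem, Lemma 2.2 (this is where $NIP$ is used), to extend the given measure $\mu_x$ over $M$ to a smooth global Keisler measure $\mu'_x$; since $\mu'$ extends $\mu$ and $\mu$ is a measure over $M$, we have $\mu'|M = \mu$. Now $\mu'$ is smooth over \emph{some} small model, and I would observe that smoothness over a small model is inherited by every larger small model: if $\mu'$ is the unique global extension of its restriction to $M''$, then any global extension of $\mu'|M'$ with $M''\subseteq M'$ restricts to a global extension of $\mu'|M''$, hence equals $\mu'$. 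So, choosing a small model $M'\supseteq M$ that also contains the original base of smoothness, we may assume $\mu'$ is smooth over $M'$ with $M\subseteq M'$.

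Next I would feed $\mu'$, the model $M'$, the formula $\phi(x,y)\in L$, the bound $\epsilon$, and the sets $X_1,\dots,X_k$ into Corollary 2.6. The $X_r$ are Borel over $M$, hence Borel over $M'$ (the restriction map $S_x(M')\to S_x(M)$ is continuous, so preimages of Borel sets are Borel), so Corollary 2.6 applies. It yields, for all sufficiently large $n$, tuples $a_1,\dots,a_n$ (realizations in $\bar M$ of suitable types over $M'$) satisfying conclusion (iii) of Corollary 2.6, i.e. for every $r=1,\dots,k$ and every $b\in\bar M$,
\[
\bigl|\,\mu'(X_r\cap\phi(x,b))-Fr(X_r\cap\phi(x,b),a_1,\dots,a_n)\,\bigr|<\epsilon .
\]
Note that the statement of Corollary 2.9 does not demand $a_i\in M$, so producing the $a_i$ in $\bar M$ is harmless.

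Finally I would specialize $b$ to range over $M$. For $b\in M$ the set $X_r\cap\phi(x,b)$ is Borel over $M$, and I would record the measure-compatibility: the pushforward of $\mu'$ (as a regular Borel measure on $S_x(M')$) along the restriction $S_x(M')\to S_x(M)$ is $\mu'|M=\mu$, so the $\mu'$-measure of the preimage over $M'$ of a Borel-over-$M$ set equals its $\mu$-measure; in particular $\mu'(X_r\cap\phi(x,b))=\mu(X_r\cap\phi(x,b))$. Substituting this into the displayed inequality gives exactly the desired conclusion. There is no genuinely hard step here; the only points that need a word are the inheritance of smoothness under enlarging the base model and this last compatibility of $\mu'$ with $\mu$ on Borel-over-$M$ sets — everything substantive is already packaged in Lemma 2.2 and Corollary 2.6.
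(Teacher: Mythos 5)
Your proof is correct and is essentially the paper's own argument: the paper simply says ``by Lemma 2.2 take a global extension $\mu'$ smooth over some $M'>M$ and apply Corollary 2.6 to $\mu'$ and $M'$''. The points you spell out -- that the base of smoothness can be enlarged to contain $M$, that Borel-over-$M$ sets are Borel over $M'$, and that $\mu'$ agrees with $\mu$ on Borel-over-$M$ sets by regularity -- are exactly the routine checks the paper leaves implicit.
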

\begin{proof} By Lemma 2.2, let $\mu'$ be global extension of $\mu$ which is smooth over some $M'>M$. Apply Corollary 2.6  to  $\mu'$ and $M'$. 
\end{proof}

We will often use the following consequence of this corollary (in a $NIP$ context): If $\mu$, $\lambda$ are two global invariant measures, assume that $\mu$ commutes with every type $p$ weakly random for $\lambda$, then $\mu$ and $\lambda$ commute. (Here we call $p$ weakly random for $\mu$ if every formula in $p$ has positive $\mu$-measure.)

To see this, given a formula $\phi(x,y)$, write $(\mu(x)\otimes\lambda(y))(\phi(x,y))=\int f(y)d\lambda_y$ as in the paragraph before Lemma 1.5, and approximate that integral by some finite sum $F_N=\sum_{j=1,..,N} \lambda(Y_j)c_j$. Use the corollary to find types $p_1,..,p_n$ weakly random for $\lambda$ such that if $a_i \models p_i$ for all $i$, $Fr(Y_j\cap\phi(x,b),a_1,..,a_n)$ is within $\epsilon$ of $\lambda(Y_j\cap \phi(x,b))$ for all $j$ and $b\in \bar M$.

If $\tilde \lambda$ denotes the average of the types $p_1,..,p_n$ then we leave it to the reader to check that $(\mu(x)\otimes\tilde\lambda(y))(\phi(x,y))$ is close to $(\mu(x)\otimes\lambda(y))(\phi(x,y))$ and $(\tilde \lambda(y)\otimes\mu(x))(\phi(x,y))$ is close to $(\lambda(y)\otimes \mu(x))(\phi(x,y))$. This is enough.

\vspace{2mm}
\noindent
We now begin to discuss ``indiscernibles" in the Keisler measure context.
\begin{Definition} Let $\mu_{x_{1},x_{2},...}$ be a Keisler measure over $M$, where $x_{1}, x_{2},...$ are distinct  variables of the same sort. 
\newline
(i) We say that $\mu_{(x_{i})_{i}}$ is indiscernible if for every formula $\phi(x_{1},..,x_{n})$ over $M$ and all $i_{1} < i_{2} < .. < i_{n}$, 
$\mu(\phi(x_{1},..,x_{n})) = \mu(\phi(x_{i_{1}},..,x_{i_{n}}))$. 
\newline 
(ii) We say that $\mu$ is totally indiscernible if $\mu(\phi(x_{1},..,x_{n})) = \mu(\phi(x_{i_{1}},..,x_{i_{n}}))$, whenever $i_{1},..,i_{n}$ are distinct.
\end{Definition}

So indiscernibility of a measure $\mu$ is with respect to a given sequence $(x_{i})_{i}$ of variables. Likewise we can speak of 
an indiscernible measure in variables $(x_{i}:i\in I)$ where $I$ is a totally ordered index set. We can use compactness to ``stretch" indiscernible measures in the
obvious manner.

\vspace{2mm}
\noindent
We do not know any elementary proof of the following lemma, so we refer the reader to \cite{BY1}. The lemma is an equivalent formulation of Theorem 5.3 of that paper, the equivalence being a consequence of Lemma 5.4 there.

\begin{Lemma} (Assume $T$ has $NIP$.) If $\mu_{(x_{i}:i<\omega)}$ over $M$ is indiscernible,  $\nu(y)$ is a measure over $M$, and $\omega$ is an amalgam of these over $M$, and $\phi(x,y)\in L$, then 
$lim_{i\rightarrow\infty}\omega(\phi(x_{i},y))$ exists. Equivalently, for any such $\phi$, $\mu$, $\nu$, and $\omega$, and $\epsilon$, it is not the case
that $|\omega(\phi(x_{i},y)) - \omega(\phi(x_{i+1},y))| > \epsilon$ for all $i$.
\end{Lemma}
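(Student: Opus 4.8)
\emph{Proposal.} The plan has two parts. First I would reduce to showing that $\lim_{i\to\infty}\omega(\phi(x_{i},y))$ exists, by checking the two displayed statements are equivalent; then I would deduce the existence of this limit by transporting everything to the randomization $T^{R}$, where the indiscernible \emph{measure} $\mu$ becomes an honest indiscernible \emph{sequence} of elements and the assertion becomes an instance of the continuous-logic alternation bound (this is, as stated, Theorem~5.3 of \cite{BY1}). For the equivalence: one direction is immediate, since if the limit exists the consecutive differences tend to $0$. For the converse, suppose the limit fails for some $\phi,\mu,\nu,\omega$; put $L=\limsup_{i}\omega(\phi(x_{i},y))>\ell=\liminf_{i}\omega(\phi(x_{i},y))$ and fix $\epsilon$ with $0<\epsilon<(L-\ell)/2$. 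Choose indices $i_{1}<i_{2}<\cdots$ with $\omega(\phi(x_{i_{k}},y))>L-\epsilon/2$ for $k$ odd and $\omega(\phi(x_{i_{k}},y))<\ell+\epsilon/2$ for $k$ even. Relabelling the variables by $x_{k}\mapsto x_{i_{k}}$ turns $\mu$ into a measure $\mu'$ in variables $(x_{k})_{k}$ which is again indiscernible (this is the only place indiscernibility of $\mu$ enters the reduction: a sub-list of the variables of an indiscernible measure carries an indiscernible measure), turns $\omega$ into an amalgam $\omega'$ of $\mu'$ and $\nu$, and makes $|\omega'(\phi(x_{k},y))-\omega'(\phi(x_{k+1},y))|>\epsilon$ for all $k$ --- so the second formulation fails for $(\phi,\mu',\nu,\omega',\epsilon)$, as needed.

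For the existence of the limit, pass to the randomization $T^{R}$: the continuous first order theory whose models are random variables valued in models of $T$ and whose type spaces are spaces of Keisler measures of $T$ (\cite{BY-Keisler}), which has $NIP$ by Ben Yaacov's theorem (\cite{BY1}) because $T$ does. Under this translation the indiscernibility of $\mu_{(x_{i})_{i}}$ over $M$ says precisely that the coordinate ``variables'' $(x_{i})_{i<\omega}$, inside the model of $T^{R}$ carrying $\omega$, form an indiscernible sequence over (the copy of) $M$; the measure $\nu(y)$ together with the amalgam $\omega$ supplies an element $d$ of that model; and $\phi(x,y)$ gives rise to a $[0,1]$-valued formula $\theta(u,v)$ of $T^{R}$ with $\theta(x_{i},d)=\omega(\phi(x_{i},y))$ for every $i$. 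Now in any continuous $NIP$ theory, if $(a_{i})_{i<\omega}$ is indiscernible over a set $A$ (not necessarily over $A\cup\{b\}$) and $b$ is any element, then $\lim_{i}\theta(a_{i},b)$ exists; this is the continuous analogue of the alternation bound, and it is exactly what Theorem~5.3 of \cite{BY1} furnishes once one carries out the translation recorded in Lemma~5.4 there. Applying it to $(x_{i})_{i}$, $d$ and $\theta$ yields the existence of $\lim_{i}\omega(\phi(x_{i},y))$.

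The last ingredient is where the difficulty lies, and it is genuinely analytic --- which is why I would invoke \cite{BY1} rather than try to stay inside ordinary model theory. The obvious elementary attempt, namely to disintegrate the indiscernible measure $\mu$ into Dirac measures and apply the classical alternation bound rowwise, fails because an indiscernible \emph{measure} need not be an average of Dirac measures concentrated on indiscernible \emph{sequences} (already the uniform measure on $n^{\omega}$ over a pure $n$-element set is indiscernible but is supported on wildly non-indiscernible sequences). A refinement --- extend $\omega$ to a smooth global measure, approximate it by averages of types via Corollary~2.6 and the $fim$ property (Definition~2.7), so that an $\epsilon$-oscillation of $\omega(\phi(x_{i},y))$ produces finite arrays $((a^{j}_{i})_{i},d^{j})_{j\le N}$ in which, after averaging over $j$, some single row has $\gtrsim\epsilon K$ truth-value changes of $\phi(\,\cdot\,,d^{j})$ over $2K$ coordinates --- still does not conclude, because those rows are indiscernible only \emph{on average} and so the classical $NIP$ alternation bound cannot be applied to them. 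Closing exactly this gap is the analytic heart of Ben Yaacov's argument, and I do not see how to avoid it within the usual framework.
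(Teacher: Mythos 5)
Your proposal matches the paper's own treatment: the paper gives no proof but simply cites Ben Yaacov's Theorem~5.3 of \cite{BY1}, noting that the equivalence with the stated formulation is Lemma~5.4 there, which is precisely the randomization translation you describe. Your reduction between the two displayed formulations and your account of why elementary disintegration-style arguments fail are both sound, but they are supplementary to what the paper itself records.
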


\begin{Remark} ($NIP$) In particular, given indiscernible measure $\mu_{(x_{i})_{i}}$ and an extension $\mu'$ over $M'$ then we obtain a unique Keisler measure in single 
variable $x$ over $M'$, which we call  $Av(\mu',M')$, whose measure of $\phi(x,c)$  (for $c\in M'$) is $lim_{i\to\infty}(\mu'(\phi(x_{i},c)))$.
\end{Remark}

\begin{Corollary} ($NIP$)  For any formula $\phi(x,y)$ and $\epsilon$ there is $N$ such that for any indiscernible Keisler measure
$\mu_{(x_{i}:i<\omega)}$ over a model $M$ (or set $A$), $b\in {\bar M}$ and extension $\mu'$ of $\mu$ over $(M,b)$, there do not exist
$i_{1} < i_{2} < ... < i_{N}$ such that $|\mu'(\phi(x_{i_{j}},b)) - \mu'(\phi(x_{i_{j+1}},b))| > \epsilon$ for all $j = 1,..,N-1$.
\end{Corollary}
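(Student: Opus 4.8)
The plan is to obtain this as the uniform (``compact'') version of Lemma 2.10, by contradiction and compactness. Suppose the assertion fails for a particular $\phi(x,y)$ and $\epsilon>0$. Then for each $N$ there is a counterexample; passing to an elementary submodel of size $|T|$ and restricting the measures accordingly, we may take it to consist of a model $M^N$ with $|M^N|\leq |T|$, an indiscernible Keisler measure $\mu^N_{(x_i:i<\omega)}$ over $M^N$, a tuple $b^N\in\bar M$, an extension ${\mu'}^N$ of $\mu^N$ over $M^N\cup\{b^N\}$, and indices $i^N_1<i^N_2<\cdots<i^N_N$ with
\[
|\,{\mu'}^N(\phi(x_{i^N_j},b^N))-{\mu'}^N(\phi(x_{i^N_{j+1}},b^N))\,|>\epsilon \qquad (j=1,\dots,N-1).
\]

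Next I would pass to a limit along a nonprincipal ultrafilter $\mathcal{U}$ on $\Nn$, \emph{reindexing the $j$-th point of each chain to position $j$}. Enumerate $M^N$ as $\bar m^N=(m^N_\alpha)_{\alpha<|T|}$ --- choosing these enumerations in the standard way so that a realization of the $\mathcal{U}$-limit of the resulting elementary diagrams is again (the universe of) a small elementary submodel of $\bar M$ --- and introduce matching parameter variables $\bar w=(w_\alpha)_{\alpha<|T|}$. Define $\sigma_\infty$ on the $L$-formulas in the variables $(x_i)_{i<\omega},y,\bar w$ by
\[
\sigma_\infty\big(\psi(x_{j_1},\dots,x_{j_k},y,\bar w)\big)=\lim_{\mathcal{U}}{\mu'}^N\big(\psi(x_{i^N_{j_1}},\dots,x_{i^N_{j_k}},b^N,\bar m^N)\big) \qquad (j_1<\cdots<j_k),
\]
the right-hand side being defined once $N\geq j_k$. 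Since an ultralimit of finitely additive probability measures on a fixed Boolean algebra is again one, and every such measure on the algebra of $L$-formulas in fixed variables is a Keisler measure, $\sigma_\infty$ is a Keisler measure over $\emptyset$. Its reduct to $\bar w$, call it $\pi(\bar w)$, has all values in $\{0,1\}$, hence is a complete type --- the $\mathcal{U}$-limit of the elementary diagrams of the $M^N$; realize it by $\bar c$ in $\bar M$, with associated small model $M_\infty$. Pushing $\sigma_\infty$ forward along $\bar w\mapsto\bar c$ produces a Keisler measure $\omega_\infty$ over $M_\infty$ in the variables $(x_i)_{i<\omega},y$ (well-defined because $\bar c\models\pi$).

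It remains to verify three things. (i) The reduct $\mu_\infty$ of $\omega_\infty$ to the variables $(x_i)_{i<\omega}$ is indiscernible over $M_\infty$: for $\psi\in L$ not involving $y$ and $j_1<\cdots<j_k$, the defining value equals $\mu^N(\psi(x_{i^N_{j_1}},\dots,x_{i^N_{j_k}},\bar m^N))$ (as ${\mu'}^N$ extends $\mu^N$), which by indiscernibility of $\mu^N$ --- note $i^N_{j_1}<\cdots<i^N_{j_k}$ --- equals $\mu^N(\psi(x_1,\dots,x_k,\bar m^N))$; taking $\mathcal{U}$-limits shows $\sigma_\infty(\psi(x_{j_1},\dots,x_{j_k},\bar w))$ is independent of the choice $j_1<\cdots<j_k$. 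This is the one point where the indiscernibility hypothesis is used. (ii) By construction $\omega_\infty$ is an amalgam over $M_\infty$ of $\mu_\infty$ and of the reduct $\nu_\infty$ of $\omega_\infty$ to $y$. (iii) For each fixed $j$ the displayed inequality holds for all $N>j$, so $|\omega_\infty(\phi(x_j,y))-\omega_\infty(\phi(x_{j+1},y))|\geq\epsilon$ for all $j<\omega$. Applying Lemma 2.10 to $\mu_\infty$, $\nu_\infty$, $\omega_\infty$, $\phi$ and, say, $\epsilon/2$ now contradicts (iii). The ``set $A$'' version is entirely analogous (Lemma 2.10 holds with a small set in place of the model, by the same proof).

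The only real difficulty I anticipate is the compactness bookkeeping: arranging the limit datum to be at once an \emph{indiscernible} measure, an honest amalgam with a measure in $y$, and a measure over a genuine small model. The first of these is exactly why the compactness should be run on the \emph{conclusion} of Lemma 2.10, building a fresh limit object, rather than by trying to stitch one finite counterexample into one infinite one --- the latter would require amalgamating two Keisler measures over a common reduct, which need not be possible. A minor point to keep track of is that the strict inequalities $>\epsilon$ only survive the limit as $\geq\epsilon$, which is why Lemma 2.10 is applied with the smaller constant $\epsilon/2$.
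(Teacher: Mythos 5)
Your proposal is correct and is essentially the paper's argument: the paper proves this corollary in one line, "by compactness, in the space of measures over $M$ in variables $((x_i)_{i<\omega},y)$", i.e.\ a limit of the putative counterexamples contradicts Lemma 2.10, and your ultralimit construction (with the reindexing of the jump positions, the use of indiscernibility to see the limit measure is indiscernible, and the $>\epsilon$ versus $\geq\epsilon$ point absorbed by applying Lemma 2.10 with $\epsilon/2$) is just a careful implementation of exactly that compactness step.
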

\begin{proof} By compactness, in the space of measures over $M$ in variables $((x_{i})_{i<\omega},y)$.
\end{proof}

\begin{Corollary} ($NIP$) Given $\phi(x,y)$ and $\epsilon> 0$ there is $N$ such that for any totally indiscernible Keisler measure $\mu_{(x_{i}:i<\omega)}$ over a model $M$, and $b\in {\bar M}$ and extension $\mu'$ of $\mu$ over $(M,b)$, for any $r\in [0,1]$ either
$\{i: \mu'(\phi(x_{i},b)) \geq r+\epsilon\}$ has cardinality $< N$, or $\{i: \mu'(\phi(x_{i},b) \leq r-\epsilon\}$ has cardinality $< N$.
\end{Corollary}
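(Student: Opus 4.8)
The plan is to deduce the statement from Corollary 2.14 by exploiting total indiscernibility to permute the variables of $\mu$, thereby rearranging the values $\mu'(\phi(x_i,b))$ into an alternating pattern that Corollary 2.14 forbids.

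First I would fix $\phi(x,y)$ and $\epsilon>0$ and take $N$ to be the constant supplied by Corollary 2.14 for $\phi$ and $\epsilon$, and then argue by contradiction that this $N$ works. So suppose there are a totally indiscernible $\mu_{(x_i:i<\omega)}$ over $M$, an element $b\in\bar M$, an extension $\mu'$ of $\mu$ over $(M,b)$, and an $r\in[0,1]$ such that $A=\{i:\mu'(\phi(x_i,b))\geq r+\epsilon\}$ and $B=\{i:\mu'(\phi(x_i,b))\leq r-\epsilon\}$ both have cardinality $\geq N$. Since $\epsilon>0$, $A$ and $B$ are disjoint, so I can choose $2N$ distinct indices $a_1,\dots,a_N\in A$ and $c_1,\dots,c_N\in B$ and a bijection $\tau$ of $\omega$ with $\tau(2k-1)=a_k$ and $\tau(2k)=c_k$ for $k=1,\dots,N$.

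The key step is to transport $\mu'$ along $\tau$. I would let $\hat\tau$ be the automorphism of the Boolean algebra of formulas over $(M,b)$ in the variables $(x_i)_i$ determined by $\psi(x_{i_1},\dots,x_{i_n})\mapsto\psi(x_{\tau(i_1)},\dots,x_{\tau(i_n)})$, and set $\nu'=\mu'\circ\hat\tau$, which is again a Keisler measure over $(M,b)$ in the variables $(x_i)_i$. The point to verify is that $\nu'$ restricts to $\mu$ on $M$: for $\psi$ over $M$ we get $\nu'(\psi(x_{i_1},\dots,x_{i_n}))=\mu'(\psi(x_{\tau(i_1)},\dots,x_{\tau(i_n)}))=\mu(\psi(x_{\tau(i_1)},\dots,x_{\tau(i_n)}))=\mu(\psi(x_{i_1},\dots,x_{i_n}))$, using that $\mu'$ extends $\mu$ and that $\mu$ is totally indiscernible (with $\tau$ injective). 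Thus $\nu'$ is an extension over $(M,b)$ of the indiscernible measure $\nu'|M=\mu$, while $\nu'(\phi(x_{2k-1},b))=\mu'(\phi(x_{a_k},b))\geq r+\epsilon$ and $\nu'(\phi(x_{2k},b))=\mu'(\phi(x_{c_k},b))\leq r-\epsilon$ for $k=1,\dots,N$. Hence $|\nu'(\phi(x_j,b))-\nu'(\phi(x_{j+1},b))|\geq 2\epsilon>\epsilon$ for $j=1,\dots,2N-1$, in particular for $j=1,\dots,N-1$, and this contradicts Corollary 2.14 applied to the indiscernible measure $\nu'|M$ and its extension $\nu'$.

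I expect the only delicate point to be the claim that the non-monotone reindexing $\tau$ sends the extension $\mu'$ to a genuine extension $\nu'$ of $\mu$ over $(M,b)$ — i.e. that $\nu'$ is a well-defined Keisler measure whose restriction to $M$ is again $\mu$. This is exactly where "totally indiscernible" is needed rather than merely "indiscernible" (a non-monotone permutation would otherwise fail to fix $\mu|M$), and it is the structural reason the conclusion here is the sharp one-sided dichotomy rather than the weaker "no long alternation" of Corollary 2.14. No compactness argument is required.
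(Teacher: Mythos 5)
Your proof is correct and is essentially the paper's own argument: the paper takes the same $N$ from the alternation statement (Corollary 2.12 in the paper's numbering, your ``2.14'') and simply asserts that it ``clearly also works,'' the implicit point being exactly your permutation of variables, which is legitimate because total indiscernibility makes the reindexed measure restrict to $\mu$ over $M$. Your write-up just makes this implicit step explicit, and correctly identifies it as the place where total (rather than mere) indiscernibility is used.
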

\begin{proof} Let $N$ be as given by Corollary 2.12 for $\phi(x,y)$ and $\epsilon$. Then clearly it also works for the present result. 
\end{proof}

\begin{Lemma} ($NIP$.)  Let $\mu$ be a global invariant Keisler measure. Then
\newline
(i) $\mu^{(\omega)}$ is indiscernible,
\newline
(ii) if both $\mu$ and $\nu$ are $A$-invariant and $\mu^{(\omega)}|A = \nu^{(\omega)}|A$ then $\mu = \nu$.
\end{Lemma}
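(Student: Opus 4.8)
My plan is to prove (i) and (ii) in turn, with (i) being a routine unwinding of definitions and (ii) being the substantive point. For (i), I would show by induction on $n$ that $\mu^{(n)}_{x_1,\dots,x_n}$ is invariant over the same small set $A$ over which $\mu$ is invariant, and then directly verify that reordering the variables $x_{i_1},\dots,x_{i_n}$ with $i_1<\dots<i_n$ leaves the measure of $\phi(x_1,\dots,x_n)$ unchanged. The inductive definition $\mu^{(n+1)}_{x_1,\dots,x_{n+1}} = \mu_{x_{n+1}}\otimes\mu^{(n)}_{x_1,\dots,x_n}$ builds the measure by always integrating the ``new'' variable on the left, so any increasing sequence of indices produces a measure built by the same recipe; the fact that each $\otimes$-product of invariant measures is again invariant (and Borel definable, by Lemma 1.5) is exactly what is needed to iterate. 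So (i) reduces to checking that the value of $\mu^{(n)}$ on $\phi(x_{i_1},\dots,x_{i_n})$ is computed by the same iterated integral regardless of which increasing tuple of indices is plugged in, which is immediate from the fact that the construction only ever refers to the relative order of the variables.

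For (ii), the strategy is to show $\mu(\phi(x,c)) = \nu(\phi(x,c))$ for every formula $\phi(x,y)$ and every $c \in {\bar M}$; since $c$ lies in some small model, it suffices to show the two measures agree after restricting to any small model $M \supseteq A$ containing $c$, and I may enlarge $M$ freely. The key device is the frequency/averaging idea from Remark 2.11 and Corollary 2.6: from the hypothesis $\mu^{(\omega)}|A = \nu^{(\omega)}|A$, I get that the two ``Morley sequence measures'' are indiscernible (by (i)) and agree over $A$. I would then use Corollary 2.6 (or rather its $fim$/smooth-extension incarnation, via Lemma 2.2 applied to $\mu|M$) to approximate $\mu(\phi(x,c))$ by a frequency $Fr(\phi(x,c),a_1,\dots,a_m)$ along a ``random'' tuple $(a_1,\dots,a_m)$ drawn according to $\mu^{(m)}$, up to error $\epsilon$; the set $\theta_m$ of good tuples has $\mu^{(m)}$-measure tending to $1$. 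The same $\theta_m$, with $\mu$ replaced by $\nu$, does the job for $\nu(\phi(x,c))$ — but here is the crucial point: $\theta_m$ is (by Corollary 2.6 and the proof of $fim$) a formula over $M$, so whether a random tuple satisfies it, and whether the frequency $Fr(\phi(x,c),x_1,\dots,x_m)$ is close to a given value, are both expressible by formulas over $M$; and $\mu^{(m)}|M = \nu^{(m)}|M$ by hypothesis (after absorbing $c$ into $M$ and using that $\mu^{(\omega)},\nu^{(\omega)}$ agree over $A \subseteq M$ — this needs a small argument, see below). Hence $\mu^{(m)}(\theta_m) = \nu^{(m)}(\theta_m)$ and, more importantly, the measure of the set of tuples with $Fr(\phi(x,c),\bar a)$ in any fixed interval is the same under $\mu^{(m)}$ and $\nu^{(m)}$; taking $m$ large and $\epsilon$ small pins down $\mu(\phi(x,c)) = \nu(\phi(x,c))$.

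The main obstacle I anticipate is the bookkeeping in the last step: the hypothesis is only that $\mu^{(\omega)}$ and $\nu^{(\omega)}$ agree on formulas \emph{over $A$}, whereas to run the frequency argument I want them to agree on $\theta_m$ and on the frequency-threshold formulas, which involve the parameter $c$. The fix is standard but must be stated carefully: the frequency of $\phi(x_1,\dots,x_m;c)$ landing in an interval can be computed, via the definability of $\mu$ (hence of $\mu^{(m)}$) over $A$ and the invariance over $A$, purely from the $A$-type of $c$ together with the restriction of $\mu^{(m)}$ to $A$ — equivalently, the function $c \mapsto \mu^{(m)}(\{\bar x: Fr(\phi(\bar x;c)) \in I\})$ depends only on $tp(c/A)$ and is determined by $\mu^{(m)}|A$; the same holds for $\nu$, and since $\mu^{(\omega)}|A = \nu^{(\omega)}|A$ these determinations coincide. (Alternatively, and perhaps more cleanly: replace $A$ by $M$ throughout, note that the hypothesis can be bootstrapped — if $\mu^{(\omega)}|A=\nu^{(\omega)}|A$ and both are $A$-invariant then $\mu^{(\omega)}|M=\nu^{(\omega)}|M$ for any $M\supseteq A$, since the restriction to $M$ is determined by the $A$-invariant Borel definition applied to parameters from $M$ — this is precisely the Borel definability of invariant measures in $NIP$ recalled in the preliminaries.) Once this reduction is in place, the weak law of large numbers packaged in Corollary 2.6 finishes the argument with no further difficulty.
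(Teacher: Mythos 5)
Part (i) of your plan is fine and matches the paper, which disposes of it as an easy induction. The gap is in part (ii), and it is exactly the step you flag as needing ``a small argument'': transferring the agreement of $\mu^{(m)}$ and $\nu^{(m)}$ from formulas over $A$ to formulas involving the new parameter $c$. Neither of your proposed fixes works. $A$-invariance (equivalently, Borel definability over $A$) tells you that $\mu^{(m)}(\theta(\bar x,c))$ depends only on $tp(c/A)$, but the Borel definition itself is extra data that is \emph{not} recoverable from $\mu^{(m)}|A$; an $A$-invariant measure is not determined by its restriction to $A$. Your ``cleaner alternative'' --- that $\mu^{(\omega)}|A=\nu^{(\omega)}|A$ forces $\mu^{(\omega)}|M=\nu^{(\omega)}|M$ for every $M\supseteq A$ because ``the restriction to $M$ is determined by the $A$-invariant Borel definition'' --- already contains the conclusion $\mu|M=\nu|M$ for all $M$, i.e.\ $\mu=\nu$; so the argument is circular at its crux. (A secondary, repairable slip: Corollary 2.6 gives $\theta_m$ of large measure with respect to the product of the \emph{smooth extension} of $\mu|M$, not with respect to $\mu^{(m)}$; the uniform-in-$b$ statement with $\mu^{(m)}(\theta_m)\to 1$ is precisely the $fim$ property, i.e.\ generic stability, which an arbitrary invariant measure need not have. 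For a \emph{fixed} $c$ the weak law of large numbers does hold under $\mu^{(m)}$, since the nonforking product is a separated amalgam, so that part could be salvaged --- but it does not rescue the transfer step.)

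What is missing is the NIP ingredient. The paper's proof of (ii) is the alternating-product argument: assuming $\mu(\phi(x,b))=r\neq s=\nu(\phi(x,b))$, it builds $\lambda_{x_1,x_2,\dots}$ by tensoring alternately with $\mu$ and $\nu$, checks that $\lambda|A=\mu^{(\omega)}|A=\nu^{(\omega)}|A$ (so $\lambda|A$ is an indiscernible measure), and observes that $\lambda(\phi(x_i,b))$ alternates between $r$ and $s$, contradicting Lemma 2.10, the measure analogue of the fact that a formula cannot alternate infinitely often along an indiscernible sequence. Your outline never invokes Lemma 2.10 or Corollary 2.12, and the frequency/definability bookkeeping you substitute for it only uses invariance and agreement over $A$, which by itself cannot distinguish two distinct $A$-invariant measures with the same restriction to $A$; some alternation-style use of the hypothesis on the $\omega$-products, as in the paper, is needed.
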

\begin{proof}
(i) Obvious and easily proved by induction.
\newline
(ii)  This is as in the type case: namely suppose for a contradiction that 
$\mu(\phi(x,b))= r \neq s = \nu(\phi(x,b))$. Let $\lambda_{1}(x_{1}) = \mu(x_{1})$, and for even $n$, $\lambda_{n}(x_{1},..,x_{n}) = \nu(x_{n})\otimes \lambda_{n-1}(x_{1},..,x_{n-1})$ and for odd $n$, 
$\lambda_{n} = \mu(x_{n})\otimes \lambda_{n-1}(x_{1},..,x_{n-1})$.
Let $\lambda_{x_{1},x_{2},...}$ be the union. Then one checks that $\lambda|A =
\mu^{(\omega)}|A = \nu^{(\omega)}|A$. But $\lambda(\phi(x_{i},b)) = r$ for odd $i$ and equals $s$ for even $i$, contradicting Lemma 2.10.
\end{proof}

\section{Generically stable measures}
This section contains our main results. Namely Theorem 3.2 below which gives equivalent conditions for a measure to be generically stable in an $NIP$ theory. We assume $NIP$ throughout, although it would not be uninteresting to develop the theory for arbitrary $T$. We give two proofs of that theorem, the first one follows very closely the proof of the analogous result for types (Proposition 3.2  of \cite{NIPII}) while the second one is inspired by the proof of the Vapnik-Chervonenkis theorem and avoids the use of Lemma 2.10.

We begin by giving a promised generalization of Lemma 3.4 of \cite{NIPII}  to measures.

 \begin{Lemma} Suppose that $\mu_{x}$ and $\lambda_{y}$ are global Keisler measures such that $\mu$ is finitely satisfiable (in some small model) and $\lambda$ is definable. Then $\mu_{x}\otimes \lambda_{y} = \lambda_{y}\otimes \mu_{x}$.
 \end{Lemma}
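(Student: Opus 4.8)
The plan is to fix a formula $\phi(x,y)$ over ${\bar M}$ and show $(\mu_x\otimes\lambda_y)(\phi) = (\lambda_y\otimes\mu_x)(\phi)$. The natural strategy is to approximate $\lambda$ (which is the one carrying the ``moving'' variable in the awkward order $\lambda_y\otimes\mu_x$) by an average of types that are weakly random for $\lambda$, reduce to the case where the ambient invariant measure is a type, and then invoke the remark following Corollary 2.9 — namely that it suffices to check that $\mu$ commutes with every type weakly random for $\lambda$. So the real content is: if $\mu_x$ is finitely satisfiable in a small model $M_0$ and $q(y)$ is a global type weakly random for $\lambda$ (in particular invariant, since $\lambda$ is definable hence invariant, and $q$ will be definable over the same base), then $\mu_x\otimes q_y = q_y\otimes \mu_x$.

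First I would set up the base model: since $\lambda$ is definable, it is definable (type-definable) over some small $M$; enlarging $M$ we may assume $\mu$ is finitely satisfiable in $M$ and $M\ni$ all parameters of $\phi$. A type $q$ weakly random for $\lambda$ need not be $M$-invariant a priori, but I can instead run the argument more concretely: expand $(\mu_x\otimes\lambda_y)(\phi(x,y,m))$ as the integral $\int f\, d\lambda_y$ with $f(q)=\mu(\phi(x,b,m))$ for $b\models q$, approximate by a finite sum $F_N=\sum_j \lambda(Y_j)c_j$ where $Y_j=\{b:\mu(\phi(x,b,m))\in I_j\}$, and then apply Corollary 2.9 to the Borel sets $Y_j$ (over $M,m$) to extract types $p_1,\dots,p_n$ weakly random for $\lambda$ with $Fr(Y_j\cap\psi(y,b'),a_1,\dots,a_n)$ close to $\lambda(Y_j\cap\psi(y,b'))$ for all $b'$ and all relevant $\psi$. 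Letting $\tilde\lambda$ be the average of the $p_i$, one checks (as the paper explicitly leaves to the reader in the passage after Corollary 2.9) that $(\mu_x\otimes\tilde\lambda_y)(\phi)$ is within $\epsilon$ of $(\mu_x\otimes\lambda_y)(\phi)$ and $(\tilde\lambda_y\otimes\mu_x)(\phi)$ within $\epsilon$ of $(\lambda_y\otimes\mu_x)(\phi)$, so it is enough to prove commutativity of $\mu_x$ with each single type $p_i$, equivalently $\mu_x\otimes q_y=q_y\otimes\mu_x$ for $q$ a global type finitely satisfiable in... — here one must be slightly careful about what invariance $q$ inherits; since the $p_i$ arise from Corollary 2.9 applied over the small model $M$, they are weakly random for $\lambda|M$ and can be taken to extend types over $M$, and one works over $M$ throughout.

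For the reduction to types: $\mu_x\otimes q_y = q_y\otimes\mu_x$ when $\mu$ is finitely satisfiable in $M$ and $q$ is a global type. This is exactly Lemma 3.4 of \cite{NIPII} in the case $\lambda=q$ a type — and indeed the present lemma is billed as the generalization of that result to measures — so I may either cite it or reprove it: to compute $(q_y\otimes\mu_x)(\phi(x,y))$ note $\phi\in q_y\otimes\mu_x$ restricted... more simply, $(q_y\otimes\mu_x)(\phi(x,y)) = \mu_x(\phi(x,b))$ is not meaningful since $q$ moves; rather one uses that $q$ being a type, $q_y\otimes\mu_x$ evaluated on $\phi$ equals $\int \mu(\phi(x,b))\,dq$? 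No — one instead uses finite satisfiability of $\mu$ in $M$ to get that for $b\models q|M$ the value $\mu(\phi(x,b))$ is determined, and compares with the approximation computing $\mu_x\otimes q_y$ via the WLLN-style sampling of Corollary 2.6/2.9 for $\mu$, picking sample points in $M$. Concretely: approximate $\mu$ by an average $\tilde\mu$ of types realized in $M$ (possible by finite satisfiability together with the frequency results), note $\tilde\mu_x\otimes q_y = q_y\otimes\tilde\mu_x$ trivially since $\tilde\mu$ is a finite average of realized types and $q$ is a type (Fubini for finitely many complete types is symmetric), and that $\tilde\mu_x\otimes q_y\approx\mu_x\otimes q_y$, $q_y\otimes\tilde\mu_x\approx q_y\otimes\mu_x$.

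The main obstacle I anticipate is the bookkeeping around which small set various objects are invariant/finitely-satisfiable over, and making the ``leave it to the reader'' estimates in the paragraph after Corollary 2.9 precise — in particular verifying that replacing $\lambda$ by the average $\tilde\lambda$ of types sampled via Corollary 2.9 perturbs both $\mu_x\otimes\lambda_y$ and $\lambda_y\otimes\mu_x$ by at most $O(\epsilon)$ simultaneously (the two orders must be controlled by the same sampling), and that the auxiliary types can be taken weakly random for $\lambda$ and based over $M$ so that Lemma 3.4 of \cite{NIPII} (or its reproof) applies. The analytic inputs — WLLN, the integral representation of $\otimes$, regularity of Keisler measures — are all already in hand, so modulo this careful uniform-in-$b$ estimate the argument should go through.
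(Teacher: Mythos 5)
Your overall plan has the reduction running in the wrong direction, and this is not a cosmetic issue. The remark after Corollary 2.8 can indeed be applied with the roles you chose ($\mu$ finitely satisfiable, $\lambda$ definable), but then the statement you are left with is ``a finitely satisfiable measure commutes with an arbitrary global type weakly random for $\lambda$,'' and at that point you have lost exactly the hypothesis that made the lemma true: a type weakly random for a definable (even smooth) measure is in general neither definable nor, as far as anything established in the paper goes, invariant over the base. Your parenthetical ``in particular invariant, \dots\ $q$ will be definable over the same base'' is false: for the smooth extension of Lebesgue measure over $\mathbb{R}$ in $RCF$, a weakly random global type $q$ has $\{b:(x<b)\in q\}=\{b: \mathrm{st}(b)>s\}$, which is not definable, so $q$ is not a definable type (and without at least invariance, $q_y\otimes\mu_x$ is not even defined). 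Consequently the fact you then need is \emph{not} Lemma 3.4 of \cite{NIPII} (which requires the second factor to be definable), and the unconditional version is simply false: a type finitely satisfiable in a small model need not commute even with itself (a coheir of an irrational cut in $DLO$ has $p_x\otimes p_y\neq p_y\otimes p_x$). The paper reduces the other side: a type weakly random for the finitely satisfiable $\mu$ is itself finitely satisfiable in the same small model (every formula of positive $\mu$-measure is realized there), hence invariant, so it suffices to show that a type finitely satisfiable in $M$ commutes with the definable \emph{measure} $\lambda$ --- both hypotheses survive the reduction.

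Your fallback argument for the type case has a second genuine gap: you propose to approximate $\mu$ by an average $\tilde\mu$ of types realized in $M$, uniformly in the parameter $b$, ``by finite satisfiability together with the frequency results.'' Finite satisfiability gives single points of $M$ inside individual positive-measure formulas; it does not give a frequency approximation by $M$-points that is uniform over all instances $\phi(x,b)$, and the sample points produced by Corollaries 2.6/2.8 are realizations of types over a larger model, not elements of $M$. If such a uniform approximation by $M$-points always existed, every finitely satisfiable measure (or type) would be definable over $M$, which fails already for a non-definable coheir in $DLO$ or $RCF$. The paper's proof of the type-versus-definable-measure case avoids all of this: definability of $\lambda$ produces $\theta(x)\in p|M$ controlling $\lambda(\phi(a',y))$, Borel definability of $p$ gives the Borel set $X=\{b:\phi(x,b)\in p\}$ with $\lambda(X)=r$, Corollary 2.8 is applied to sample $\lambda$ (not $\mu$) by points $b_1,\dots,b_n\in{\bar M}$, and finite satisfiability of $p$ in $M$ is used only at the end to find a single $a'\in M$ realizing $\theta$ together with the finite $\phi$-pattern on the $b_i$, yielding the contradiction. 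You would need to restructure your argument along these lines.
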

 \begin{proof} We first note that using the remark following 2.8 it suffices to prove the lemma when $\mu$ is a type $p$ say. 
 So assume $M$ is a small model over which the TYPE $p(x)$ is finitely satisfiable and the measure $\lambda_{y}$ is definable.
 Let $\phi(x,y)\in L$. Suppose for a contradiction that $\phi(x,y)\in L$ and $(p(x)\otimes \lambda_{y})(\phi(x,y)) = r$, 
$(\lambda_{y}\otimes p(x))(\phi(x,y)) = s$, and $r\neq s$. Let $\epsilon = |r-s|/4$.
Note that $s$ is precisely $\lambda(\phi(a,y))$ where $a$ is some (any) realization of $p|M$. By definability of $\lambda$ over $M$, let $\theta(x)$ be a formula over $M$, which is in $p|M$, and such that 
\newline
(I) if $\models\theta(a')$ then $\lambda(\phi(a',y))$ is strictly
within $\epsilon$ of $s$. 

By Borel definability of $p$ over $M$, $X = \{b: \phi(x,b)\in p\}$ is Borel over $M$. Hence $r = \lambda(X)$. Apply 2.8 to $\lambda|M$ to find $b_{1},..,b_{n}\in {\bar M}$ such that 
\newline
(II) for all $a'\in M$, $\lambda(\phi(a',y))$ is within $\epsilon$ of $Fr(\phi(a',y),b_{1},..,b_{n})$ and
\newline 
(III) the proportion of $b_{i}$'s in $X$ is within $\epsilon$ of $r$. Suppose for simplicity that that $b_{i}\in X$ for precisely $i = 1,..,m$.

Let $a$ realize $p|(M,b_{1},...,b_{n})$. Hence, by the definition of $X$, $\models \phi(a,b_{i})$ just if $1\leq i \leq m$.
Also of course $\models \theta(a)$.  By finite satisfiability of $p$ in $M$ there is $a'\in M$ such that $\models\theta(a')$,
$\models \phi(a',b_{i})$ for $i=1,..,m$, and $\models \neg\phi(a',b_{i})$ for $i=m+1,..,n$. 
By (II) $\lambda(\phi(a',y))$ is within $\epsilon$ of $m/n$. On the other hand by (I) $\lambda(\phi(a',y))$ is within $\epsilon$ of $s$.
As $m/n$ is within $\epsilon$ of $r$. we have a contradiction.
 \end{proof}

 \vspace{2mm}
 \noindent
 \begin{Theorem} Suppose that $\mu(x)$ is a global Keisler measure which is $A$-invariant. Then the following are equivalent:
 \newline
 (i) $\mu$ is both definable (necessarily over $A$) and finitely satisfiable in a small model (necessarily in any model containing $A$),
 \newline
 (ii) $\mu^{(\omega)}_{(x_{1},x_{2}...)}|A$ is totally indiscernible,
 \newline
 (iii) $\mu$ is $fim$,
 \newline
 (iv) for any global $A$-invariant Keisler measure $\lambda_y$, $\mu_{x}\otimes \lambda_{y} =
 \lambda_{y}\otimes \mu_{x}$,
 \newline
 (v) $\mu$ commutes with itself : $\mu_x \otimes \mu_y = \mu_y \otimes \mu_x$.
 \newline
 (vi)  for some small model $M_{0}$ containing $A$, for any Borel over $A$ set $X$ and any formula $\phi(x)$ over $\bar M$,  if $\mu(X\cap\phi(x)) > 0$ then there is $a\in M_{0}$ such that $a\in X$ and $\phi(a)$.  
 
  \end{Theorem}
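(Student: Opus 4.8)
The plan is to prove the cycle $(i)\Rightarrow(iv)\Rightarrow(v)\Rightarrow(ii)\Rightarrow(iii)\Rightarrow(i)$ and, separately, $(i)\Leftrightarrow(vi)$. Several arcs are cheap. $(iv)\Rightarrow(v)$ is the case $\lambda_y:=\mu_y$. For $(i)\Rightarrow(iv)$, given an $A$-invariant $\lambda_y$, the remark following Corollary 2.8 reduces commuting with $\lambda$ to commuting with the finitely many weakly random types for $\lambda$ that enter its approximation, and since $\mu$ is both definable and finitely satisfiable in a small model, Lemma 3.1 (a definable measure commutes with every finitely satisfiable one, and conversely) supplies these commutativities. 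The reverse formal remark $(ii)\Rightarrow(v)$ holds because total indiscernibility of $\mu^{(\omega)}$ is literally the assertion $\mu_x\otimes\mu_y=\mu_y\otimes\mu_x$ at the level of $\mu^{(2)}$.

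For $(v)\Rightarrow(ii)$: $\mu^{(\omega)}$ is indiscernible by Lemma 2.14(i), so, the symmetric group being generated by adjacent transpositions, it suffices that interchanging $x_i,x_{i+1}$ fix $\mu^{(n)}$; unwinding $\mu^{(n)}$ as an iterated $\otimes$ and using associativity of $\otimes$ on invariant measures, this is hypothesis $(v)$ up to relabelling, and restricting to $A$ gives $(ii)$. The substantive arc is $(ii)\Rightarrow(iii)$. One follows the pattern of Corollary 2.6 but substitutes, for the smoothness/Lemma 2.3 input used there, the uniform bound $N=N(\phi,\epsilon)$ of Corollary 2.13 (itself from Lemma 2.10): total indiscernibility of $\mu^{(\omega)}$ forces, uniformly in $b$, all but boundedly many of the values $\mu'(\phi(x_i,b))$ to cluster within $\epsilon$ of a single value, and this uniformity is exactly what lets one write down a single first-order $\theta_m(x_1,\dots,x_m)$ saying that the empirical frequency of $\phi(x,y)$ along $x_1,\dots,x_m$ has quantitatively stabilised; Fact 1.8 (using that $\mu^{(m)}$ is a separated amalgam of $m$ copies of $\mu$) gives $\mu^{(m)}(\theta_m)\to 1$, and Corollary 2.13 makes every realisation of $\theta_m$ $\epsilon$-good for all $b$ at once, which is $fim$.

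For $(iii)\Rightarrow(i)$: finite satisfiability is easy --- if $\mu(\phi(x,b))>\epsilon$ then any $(a_1,\dots,a_m)\models\theta_m$ (which exist since $\mu^{(m)}(\theta_m)>0$) has some $a_i\models\phi(x,b)$, and collecting the boundedly-many witness tuples over all pairs $(\phi,\epsilon)$ into one small model $M_0$ yields finite satisfiability in $M_0$; definability over $A$ follows by contradiction, for if $\{b:\mu(\phi(x,b))\geq r\}$ (which is $Aut(\bar M/bdd(A))$-invariant) failed to be type-definable over $A$ one would get $b^*$ in its closure with $\mu(\phi(x,b^*))<r$ and a net $(b_i)$ of realisations of nearby types with $\mu(\phi(x,b_i))\geq r$; choosing by $fim$ a fixed tuple $\bar a$ whose frequencies approximate $\mu(\phi(x,-))$ within $\delta<(r-\mu(\phi(x,b^*)))/2$, each $\phi(x,b_i)$ contains strictly more coordinates of $\bar a$ than $\phi(x,b^*)$ does, so by pigeonhole some fixed $a_k$ satisfies $\phi(a_k,b_i)\wedge\neg\phi(a_k,b^*)$ cofinally often, contradicting $b_i\to b^*$ over the small model containing $\bar a$. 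As for $(i)\Leftrightarrow(vi)$: $X=\{x:x=x\}$ turns $(vi)$ into finite satisfiability of $\mu$ in $M_0$, and the remaining content --- definability --- is extracted from the localised form of $(vi)$ (using Borel sets over $A$ to witness the needed continuity); conversely, from $(i)$ we have $(iii)$, and the refinement of Corollary 2.6 available for $fim$ measures --- simultaneous frequency estimates for $\phi(x,b)$ and $X\cap\phi(x,b)$, $X$ Borel over $A$ --- combined with boundedness of the set of relevant pairs $(\phi,X)$, so that the witnesses fit into one small $M_0$, gives $(vi)$.

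I expect the main obstacle to be the $NIP$-driven return to $(i)$: producing the genuine first-order formula $\theta_m$ in $(ii)\Rightarrow(iii)$, and extracting type-definability of $\mu$ in $(iii)\Rightarrow(i)$, i.e.\ converting ``the empirical frequencies stabilise, uniformly in the external parameter'' into syntax. This is exactly the job of the uniform bounds of Section 2 (Lemma 2.10, Corollary 2.13), which here take over the role played by the Vapnik--Chervonenkis theorem in \cite{NIPII}.
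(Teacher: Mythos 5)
Your cycle stands or falls on the arc $(i)\Rightarrow(iv)$, and as argued it fails. Lemma 3.1 commutes a \emph{finitely satisfiable} measure with a \emph{definable} one; after you invoke the remark following Corollary 2.8 to replace an arbitrary $A$-invariant $\lambda$ by types weakly random for it, those types are in general neither definable nor finitely satisfiable in a small model --- indeed they need not even be invariant, so the product $p\otimes\mu$ is not even defined without extra care --- and so neither assignment of roles in Lemma 3.1 applies. The fact that $\mu$ itself enjoys both properties does not help: the lemma needs the \emph{other} factor to carry one of them. This is not a presentational slip; commuting a definable-and-finitely-satisfiable measure with a merely invariant measure is precisely the nontrivial content of the theorem, and the paper only reaches (iv) \emph{after} establishing $fim$: in its $(iii)\Rightarrow(iv)$ it reduces to an invariant type $q$, chooses $fim$ witnesses $a_1,\dots,a_k$ approximating simultaneously $\mu(\phi(x,b))$ and $\mu$ of the Borel set $\{a:\phi(a,y)\in q\}$, and then realizes $b\models q|(M,a_1,\dots,a_k)$. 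Since $(i)\Rightarrow(iv)$ is your only arrow out of (i), the proposed cycle is broken. It can be repaired: the cheap exit from (i) is $(i)\Rightarrow(v)$ (Lemma 3.1 applied to $\mu$ against itself, since $\mu$ is both finitely satisfiable and definable) or the paper's $(i)\Rightarrow(ii)$ (each $\mu^{(n)}$ is definable, so $\mu_{x_{n+1}}\otimes\mu^{(n)}=\mu^{(n)}\otimes\mu_{x_{n+1}}$), after which (iv) must be obtained from (iii) by the $fim$ argument just described, which your plan omits.

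The second genuine gap is $(vi)\Rightarrow(i)$. Taking $X$ to be everything does give finite satisfiability in $M_0$ (hence invariance and Borel definability), but ``definability is extracted from the localised form of (vi)'' is not an argument; this is exactly the implication the paper postpones to Theorem 4.8 ($(iii)\Rightarrow(i)$ there), whose proof is a quantitative computation (the sets $C_t$, $B_t$, the formulas $\Theta_k$, and the comparison of $P$, $P'$, $R$) showing that Borel satisfiability forces the $A$-invariant extension to commute with every $A$-invariant measure, whence $\mu^{(\omega)}|A$ is totally indiscernible and $\mu$ is generically stable. Nothing in your sketch does this work. Your converse direction $(i)\Rightarrow(vi)$, via the $fim$ refinement for Borel sets (Lemma 3.6) and collecting the boundedly many witness tuples (one per formula, Borel-over-$A$ set and $1/n$) into one small model, is in line with the paper. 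Two smaller points: $(ii)\Rightarrow(v)$ is not ``literal'' --- total indiscernibility of $\mu^{(\omega)}|A$ only gives equality of the two products over $A$, not globally (which is why the paper routes $(ii)\Rightarrow(iii)\Rightarrow(iv)\Rightarrow(v)$); and your net argument in $(iii)\Rightarrow(i)$ proves type-definability over a small model containing the $fim$ tuples (the convergence must be taken over a base containing $\bar a$, not over $A$), with ``over $A$'' then recovered from $A$-invariance.
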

 \begin{proof} (i) implies (ii): 
 Note that if $\mu$ and $\lambda$ are both definable measures, then so is $\mu\otimes \lambda$. So we see that each $\mu^{(n)}_{x_{1},..,x_{n}}$ is definable. By Lemma 3.1
 $\mu^{(n+1)}_{x_{1},..,x_{n+1}} =_{def}\mu_{x_{n+1}}\otimes \mu^{(n)}_{x_{1},..,x_{n}} = 
 \mu^{(n)}_{x_{1},..,x_{n}}\otimes \mu_{x_{n+1}}$. It follows easily (using indiscernibility of each $\mu^{(n)}$) that each $\mu^{(n)}_{x_{1},..,x_{(n)}}$ is totally indiscernible, hence so is $\mu^{(\omega)}$.
 
 \vspace{2mm}
 \noindent
 (iii) implies (i): For all $\phi(x,y)\in L$, $\epsilon = 1/n$ and sufficiently large $m$, the definition of $fim$ supplies us with a formula $\theta(x_1,..,x_{m})$. Take a model $M$ containing the parameters of those formulas for all $\phi$, $n$ and $m$. Then $\mu$ is definable and finitely satisfiable over $M$.
 
 \vspace{2mm}
 \noindent
 (ii) implies (iii):  Without loss $A = M$ is a small model. Assume $\mu^{(\omega)}$ totally indiscernible.
 \newline
 {\em Claim.}  Suppose $\lambda_{x_{1},x_{2},..}$ is an extension of $\mu^{(\omega)}_{x_{1},x_{2},..}|M$ to a model $M'>M$. Then $Av(\lambda,M')$ is precisely $\mu|M'$. 
 \newline
 {\em Proof of Claim.} Otherwise, we have some formula $\phi(x,c)$ over $M'$ such that
 $\mu(\phi(x,c)) = r$ say, and $Av(\lambda,M')(\phi(x,c)) = s \neq r$. Without loss $s > r$, and let $\epsilon = s-r$. Let $N_{\phi,\epsilon}$  be given by Corollary 2.13. 
By Lemma 2.10, $\lambda(\phi(x_{i},c))> s-\epsilon$ for eventually all $i$.
However let $\alpha_{(x_{i}:i<\omega + \omega)}$ be 
$\mu^{(\omega)}_{(x_{i}:\omega \leq i < \omega+\omega)}|M'\otimes \lambda_{(x_{i}:i<\omega)}$, a measure over $M'$. Then $\alpha$ is clearly an extension of $\mu^{(\omega+\omega)}$ to $M'$. But  $\alpha(\phi(x_{i},c)) = r$ for all $i\geq\omega$, and we have a contradiction to the existence of $N_{\phi,\epsilon}$. The claim is proved.

\vspace{2mm}
\noindent
Let $\epsilon>0$ and $\phi(x,y) \in L$. Let $N$ be given by Corollary 2.13 for $\phi$ and $\epsilon$ and let $M=4N$. Then, by the Claim, for any two measures $\lambda,\lambda'$ extending $\mu^{(\omega)}$, for any $b\in \bar M$, we have $|\{i:|\lambda(\phi(x_i,b))-\lambda'(\phi(x_i,b))|\geq 2\epsilon\}|<2N$. By compactness, there is a formula $\Phi(x_1,..,x_M)$ and a small $r>0$ such that for any measure $\nu_{(x_1,..,x_M)}$ such that $|\nu(\Phi)-\mu^{(M)}(\Phi)|\leq r$, we have $|\{i:|\nu(\phi(x_i,b))-\mu^{(M)}(\phi(x_i,b))|\geq 2\epsilon\}|<2N$.

For sufficiently large $k$, let $\theta_{kM}(x_1,..,x_{kM})$ be the formula that expresses that $Fr(\Phi(x_1,..,x_M);y_0,..,y_{k-1})$ is within $r/2$ of $\mu_{(M)}(\Phi)$, where $y_i$ denotes the tuple of variables $(x_{Mi+1},..,x_{Mi+M})$. If $kM < n < (k+1)M$, define $\theta_n(x_1,..,x_n) = \theta_{kM}(x_1,..,x_{kM})$. Then by the weak law of large numbers, $lim_{n\rightarrow +\infty}\mu^{(\omega)}(\theta_n)=1$. Futhermore, for sufficiently large $n$, if $(a_1,..,a_n)\models \theta_n$ then, letting $\nu$ be the average of $tp(a_1/\bar M),..,tp(a_n/\bar M)$, $\nu(\phi(x,b))$ is within $3\epsilon$ of $\mu(\phi(x,b))$ for every $b\in \bar M$. This shows that $\mu$ is $fim$.

\vspace{2mm}
\noindent
(iii) implies (iv).  We have to prove that any $fim$ measure commutes with any invariant measure. As above it suffices to prove that an $fim$ measure $\mu_{x}$ commutes with any invariant type.

 Let $q(y)$ be such. Assume both $\mu$ and $q$ are $M$-invariant. Let $\phi(x,y)\in L$.  Note that $(\mu_{x}\otimes q(y))(\phi(x,y)) = \mu(\phi(x,b)) = r$ for some (any) $b$ realizing $q|M$. And also
$(q(y)\otimes\mu_{x})(\phi(x,y)) = \mu(X) = s$ where $X = \{a:\phi(a,y)) \in q(y)\}$ (a Borel set over $M$). For given $\epsilon$ choose a set $a_{1},....,a_{k}$ witnessing $fim$ for $\mu$ with respect to $\phi(x,y)$ and such that $Fr(X;a_1,..,a_k)$ is within $\epsilon$ of $\mu(X)$. Let $b$ realize $q|(M,a_{1},..,a_{k})$. So $\mu(X)$ is within $\epsilon$ of $Fr(\phi(x,b);a_1,..,a_k)$. But the latter is within $\epsilon$ of $\mu(\phi(x,b))$. So $r = s$ and we are finished. 

\vspace{2mm}
\noindent
(iv) implies (v). Obvious. 
 
 \vspace{2mm}
 \noindent
(v) implies (ii). This follows from associativity of $\otimes$: for any $k <n$ we have assuming (v), $\mu_{x_1} \otimes ... \otimes \mu_{x_k}\otimes \mu_{x_{k+1}} \otimes ... \otimes \mu_{x_n} = \mu_{x_1} \otimes ... \otimes \mu_{x_{k+1}}\otimes \mu_{x_{k}} \otimes ... \otimes \mu_{x_n}$. This is enough.

\vspace{2mm}
\noindent  (vi) is a form of  "Borel satisfiability".  It is analogous to (i) since Borel definability is automatic.  
The proof of equivalence with the other conditions  is postponed to Lemma 3.6 and Theorem 4.8 below.

 \end{proof}
 
 \vspace{2mm}
 \noindent
 We call a global Keisler measure $\mu$ {\em generically stable} if it satisfies the equivalent conditions of Theorem  3.2. (Of course assuming $T$ is $NIP$.)

 \begin{Proposition} Suppose that $\mu$ is generically stable and $A$-invariant. Then $\mu$ is the unique $A$-invariant extension of $\mu|A$.
 \end{Proposition}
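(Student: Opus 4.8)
The plan is to deduce this from Lemma 2.14(ii). So let $\nu$ be a global $A$-invariant Keisler measure with $\nu|A = \mu|A$; I will show $\mu^{(\omega)}|A = \nu^{(\omega)}|A$, and then Lemma 2.14(ii) forces $\mu = \nu$. Since $\mu^{(\omega)}$ restricted to the variables $x_1,\dots,x_n$ is $\mu^{(n)}$ (and likewise for $\nu$), and since by associativity of $\otimes$ one has $\mu^{(n)} = \mu_{x_n}\otimes\cdots\otimes\mu_{x_1}$ and $\nu^{(n)} = \nu_{x_n}\otimes\cdots\otimes\nu_{x_1}$ (all factors being invariant, all such products are defined), it is enough to prove $\mu^{(n)}|A = \nu^{(n)}|A$ for every $n$.

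I will use two consequences of generic stability of $\mu$, both via Theorem 3.2. First, $\mu$ commutes with every global invariant Keisler measure (condition (iv)). Second, each power $\mu^{(k)}$ is again generically stable: by Theorem 3.2(i), $\mu$ is $A$-definable and finitely satisfiable in a small model, so iterating Lemma 1.5 shows $\mu^{(k)}$ is $A$-definable and finitely satisfiable in a small model, hence (being $A$-invariant) is generically stable by Theorem 3.2(i); in particular $\mu^{(k)}$ too commutes with every global invariant measure.

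Now fix $n$, and for $0\le k\le n$ set
\[
\theta_k \;=\; \nu_{x_n}\otimes\cdots\otimes\nu_{x_{k+1}}\otimes\mu_{x_k}\otimes\cdots\otimes\mu_{x_1},
\]
so that $\theta_n=\mu^{(n)}$ and $\theta_0=\nu^{(n)}$. It suffices to check $\theta_{k+1}|A=\theta_k|A$ for each $k<n$. Write $\theta_{k+1}=P\otimes\mu_{x_{k+1}}\otimes Q$ with $P=\nu_{x_n}\otimes\cdots\otimes\nu_{x_{k+2}}$ and $Q=\mu_{x_k}\otimes\cdots\otimes\mu_{x_1}=\mu^{(k)}_{x_1,\dots,x_k}$ (one of $P,Q$ possibly empty). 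By the first fact, $\mu_{x_{k+1}}$ commutes with the invariant measure $Q$, so $\theta_{k+1}=P\otimes Q\otimes\mu_{x_{k+1}}$. Put $R=P\otimes Q$, an $A$-invariant, hence Borel definable, measure. For a formula $\phi(\bar x,x_{k+1})$ over $A$, the value $(R\otimes\rho)(\phi)$ is obtained by integrating $q\mapsto R(\phi(\bar x,b))$ (for $b\models q$) against $\rho$; since $R$ is $A$-invariant and $\phi$ is over $A$, this integrand depends only on $\mathrm{tp}(b/A)$, so it descends to a fixed function on $S_{x_{k+1}}(A)$ not involving $\rho$, and $(R\otimes\rho)(\phi)$ depends on $\rho$ only through $\rho|A$. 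Taking $\rho=\mu_{x_{k+1}}$ and $\rho=\nu_{x_{k+1}}$ and using $\mu|A=\nu|A$ yields $\theta_{k+1}|A=(P\otimes Q\otimes\nu_{x_{k+1}})|A$. Finally, by the second fact applied to $Q=\mu^{(k)}$, $Q$ commutes with the invariant measure $\nu_{x_{k+1}}$, so $P\otimes Q\otimes\nu_{x_{k+1}}=P\otimes\nu_{x_{k+1}}\otimes Q=\theta_k$. This closes the induction.

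The crux is the middle manoeuvre: one cannot replace $\mu_{x_{k+1}}$ by $\nu_{x_{k+1}}$ while it sits between $P$ and $Q$, because the integral defining $\otimes$ evaluates $R$ on instances $\phi(\bar x,b)$ with $b\notin A$, where $\mu$ and $\nu$ may genuinely disagree; the factor must first be slid to the outer end of the product and slid back afterwards. Sliding $\mu_{x_{k+1}}$ out uses that $\mu$ is generically stable, and the less obvious point is that sliding $\nu_{x_{k+1}}$ back past $Q$ needs $Q=\mu^{(k)}$ to be generically stable — this is why the second fact is required. A minor technical point to keep honest is that ``$A$-invariant'' means invariance under $\mathrm{Aut}({\bar M}/A)$, so that $R(\phi(\bar x,b))$ really does depend only on $\mathrm{tp}(b/A)$ and the descent of the relevant Borel set to $S_{x_{k+1}}(A)$ is (universally) measurable, legitimizing the change of variables in the integral; this is routine.
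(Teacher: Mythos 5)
Your proof is correct and takes essentially the same route as the paper, whose entire argument is the sketch ``by property (iv) one checks inductively that $\mu^{(n)}|A=\nu^{(n)}|A$ for all $n$, then apply Lemma 2.14(ii)''; your coordinate-by-coordinate interpolation $\theta_k$ is a careful implementation of exactly that induction. Your point that one must also commute the powers $\mu^{(k)}$ with $\nu$ --- hence that $\mu^{(k)}$ is itself generically stable, via Lemma 1.5 and Theorem 3.2(i) --- is precisely the ingredient left implicit in the paper's ``by property (iv)''.
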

 \begin{proof}  Suppose that $\nu$ is $A$-invariant and $\nu|A = \mu|A$. 
 By property (iv) above we check inductively that $\mu^{(n)}|A = \nu^{(n)}|A$ for all $n$. By Lemma 2.14, $\mu = \nu$.
 \end{proof}
 
 We give now a proof of Theorem 3.2 which does not use Lemma 2.10. That lemma was used only in the implication $(ii)\rightarrow (iii)$. So we give an alternative proof that if an invariant measure $\mu$ is such that $\mu^{(\omega)}$ is totally indiscernible, then $\mu$ is $fim$.
\\

By a symmetric measure on some $X^{n}$, $X$ a definable set, we will now mean a measure $\mu_{(x_1,..,x_n)}$ such that $\mu(x_i\in X)=1$ for all $i$ and for any $\sigma\in S_n$ and formula $\phi(x_1,..,x_n)$, $\mu(\phi(x_1,..,x_n))=\mu(\phi(x_{\sigma.1},..,x_{\sigma.n}))$.

The following crucial lemma is related to the classical Vapnik-Chervonenkis theorem (see \cite{Vapnik-Chervonenkis}) and could be proved by similar methods. But it does not seem to be a direct consequence of it.

\begin{Lemma} (Assume $T$ has $NIP$.)
Let $\phi(x,y)\subseteq X\times Y$ be a formula over a model $M$. For $n>0$, let $\mu_n$ be any symmetric, $M$-invariant global measure on $X^{2n}$. Given $b\in Y$ and $a=(a_1,..,a_n) \in X^{n}$, let $f(a;b)=Fr(\phi(x,b);a_1,..,a_n)$. Let $$\delta_0(a,a';b) = |f(a;b)-f(a';b)|,$$
$$\delta(a,a')=\sup_{b\in \bar M} \delta_0(a,a';b).$$
Finally, let $E(n)$ be the $\mu_n$-expectation of $\delta$. Then $\lim_{n\rightarrow \infty}E(n)=0$.
\end{Lemma}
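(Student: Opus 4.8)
The statement is a uniform law of large numbers: the frequencies $f(a;b)$ should cluster around a common value uniformly in $b$, and the symmetrization trick (comparing two independent samples) is the standard route. I would follow the classical symmetrization-and-randomization argument from Vapnik--Chervonenkis, but replace the combinatorial $\varepsilon$-net counting by the model-theoretic input that $NIP$ gives uniform VC-type bounds, and the crucial extra feature here is that $\mu_n$ is only assumed symmetric and $M$-invariant, not a genuine product measure, so I must be careful never to use independence of the coordinates beyond what symmetry provides.

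First I would fix $\varepsilon>0$ and suppose for contradiction that $E(n)\geq \varepsilon$ for infinitely many $n$; by compactness (in the space of symmetric $M$-invariant measures on $X^{2n}$, $n$ varying, or rather passing to an ultraproduct/limit) I would extract a situation where $\delta(a,a')\geq \varepsilon/2$ on a set of positive measure for all large $n$. On that set there is, for each pair $(a,a')$, some $b=b(a,a')\in\bar M$ with $\delta_0(a,a';b)\geq\varepsilon/2$, i.e. $|f(a;b)-f(a';b)|\geq\varepsilon/2$. The heart of the matter is that $b(a,a')$ depends on $(a,a')$; to get a uniform contradiction from $NIP$ I need to reduce to finitely many relevant instances $\phi(x,b_1),\dots,\phi(x,b_t)$. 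This is exactly where the hypothesis ``$T$ has $NIP$'' enters: for the fixed formula $\phi(x,y)$ there is a bound $N$ (depending only on $\phi$ and $\varepsilon$) on the size of an antichain of sets $\phi(x,b)$ in the sense of alternation, so that the family $\{\phi(x,b):b\in\bar M\}$ can be $\varepsilon/8$-approximated, relative to the empirical measure $Fr(-;a_1,\dots,a_n)$, by boundedly many members. Then $\delta(a,a')=\sup_b\delta_0(a,a';b)$ is, up to $\varepsilon/8$, a maximum over boundedly many $b$'s, and I can apply a one-dimensional estimate to each.

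For the one-dimensional estimate: fix $b$. Using symmetry of $\mu_{2n}$ under the transpositions swapping $x_i$ with $x_{n+i}$ (the Rademacher/randomization step), the quantity $\mu_n(\delta_0(a,a';b)\geq\varepsilon/2)$ is controlled by a sign-flipping average, and since each individual $\phi(x_i,b)$ has $\mu_n$-expectation equal to a common value $r_b$ (by symmetry of $\mu_n$ among the $2n$ coordinates), the expected square $E[(f(a;b)-r_b)^2]$ is $O(1/n)$ — here I only need pairwise behaviour of the coordinates, and symmetry forces $\mu_n(\phi(x_i,b)\wedge\phi(x_j,b))$ to be the same constant $s_b$ for all $i\neq j$, which is all a second-moment/Chebyshev bound needs. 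So $\mu_n(|f(a;b)-r_b|\geq\varepsilon/4)\leq C/(n\varepsilon^2)$ for each fixed $b$, hence also $\mu_n(\delta_0(a,a';b)\geq\varepsilon/2)\leq 2C/(n\varepsilon^2)$, and summing over the $\leq N$ relevant $b$'s gives $\mu_n(\delta(a,a')\geq\varepsilon)\leq 2NC/(n\varepsilon^2)+o(1)\to 0$, so $E(n)\to 0$, the desired contradiction.

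**Main obstacle.** The delicate point is the reduction from $\sup_{b\in\bar M}$ to a bounded set of $b$'s in a way that is measurable and uniform in the sample $(a,a')$, using $NIP$ but not a product structure on $\mu_n$. One cannot simply invoke the VC theorem as a black box because $\mu_n$ is not a product; instead I expect to argue that, on the event $\delta(a,a')\geq\varepsilon$, $NIP$ limits the number of distinct ``Boolean types'' the sets $\{i:\phi(a_i,b)\}$ can realize as $b$ ranges over $\bar M$ — this is the combinatorial content of finite VC dimension — and then a pigeonhole/extraction picks out the finitely many representative $b$'s. Handling the measurability of $\delta$ (it is a supremum over $\bar M$ of a definable-in-$(a,a',b)$ quantity, hence Borel by $NIP$-definability of the measure, cf. the Borel definability results recalled in Section 1) and making the $o(1)$ error genuinely uniform are the technical pieces I would need to nail down; everything else is routine second-moment estimation.
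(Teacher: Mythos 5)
There is a genuine gap, and it sits exactly at the step you call ``routine second-moment estimation''. You claim that for a fixed $b$, symmetry of $\mu_n$ alone gives $E\bigl[(f(a;b)-r_b)^2\bigr]=O(1/n)$, where $r_b=\mu_n(\phi(x_i,b))$. But exchangeability only gives $E\bigl[(f(a;b)-r_b)^2\bigr]=\frac{r_b(1-r_b)}{n}+\frac{n-1}{n}\bigl(s_b-r_b^2\bigr)$ with $s_b=\mu_n(\phi(x_i,b)\wedge\phi(x_j,b))$ for $i\neq j$, and nothing forces $s_b\approx r_b^2$: take $\mu_n$ to be the exchangeable mixture which with probability $1/2$ makes every coordinate satisfy $\phi(x,b)$ and with probability $1/2$ makes none. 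Then $r_b=s_b=1/2$ and $|f(a;b)-r_b|=1/2$ almost surely, so the Chebyshev bound about the fixed mean $r_b$ is simply false for symmetric (non-product) measures --- while the lemma itself is unaffected, since there $f(a;b)-f(a';b)=0$. So any correct argument must estimate the difference of the two half-sample frequencies conditionally on the sample, not the deviation from a $\mu_n$-mean. A second problem: your representatives $b_1,\dots,b_N$ are chosen relative to the empirical measure of the sample $(a,a')$, hence are sample-dependent, so a union bound over per-fixed-$b$ events computed under $\mu_n$ does not apply to them as written.

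The paper's proof repairs precisely these two points, and you already gesture at the right tool without using it. Fix the pair $(a,a')$ and randomize over the group $(\Zz/2)^n$ of swaps $x_i\leftrightarrow x_i'$. Conditionally on $(a,a')$, for each $b$ the swapped difference of frequencies is a sum of $|c(b)|\le n$ genuinely independent $\pm 1/n$ terms under the uniform measure on swaps (no hypothesis on $\mu_n$ enters here), so Chebyshev shows at most an $\epsilon^{-2}2^{-n}$ proportion of swaps makes $\delta_0>\epsilon$ for that $b$; since this bad set of swaps depends only on the traces $\{i:\phi(a_i,b)\}$ and $\{i:\phi(a'_i,b)\}$, NIP (Sauer--Shelah) bounds the number of distinct bad sets polynomially in $n$, so their union has proportion $<\epsilon$ for large $n$, whence the swap-average of $\sup_b\delta_0$ is $<2\epsilon$ for \emph{every} $(a,a')$. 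Only at the very end is the symmetry of $\mu_n$ invoked, to identify $E(n)$ with the $\mu_n$-expectation of this swap-average. If you recast your argument in this conditional form --- Chebyshev over the swaps, union bound over traces on the fixed sample, symmetry of $\mu_n$ used only to transfer back --- it goes through; as written, it does not.
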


\begin{proof}
Note first that $\delta$ is measurable for the boolean algebra generated by the definable subsets of $X^{2n}$ of the form: $(\exists y)(\bigwedge_{i=1}^{2n} \phi(x_i,y)^{\nu(i)})$ (where $\phi(x_i,y)^{\nu(i)}$ is either $\phi(x_i,y)$ or $\neg\phi(x_i,y)$). In particular it makes sense to ask for the $\mu_n$-expectation of $\delta$.

 Fix $\epsilon>0$, and let $n$ be large compared
to $\epsilon$.   

Let $\Zz/2$ act on the variables $\{x_i,x_i'\}$ by flipping them, and let $(\Zz/2)^n$ act 
on $\{x_1,\ldots,x_n,x_1',\ldots,x_n' \}$ by the product action.   

Given $(a,a')$ and $b$, we have  $|\delta_0(a,a';b)| \leq 1$.
  Let  $X(b)= \{s \in (\Zz/2)^n: \delta_0(s(a,a');b) >  {\epsilon} \}$ and let $c(b) = \{i: \phi(a_i,b) \not \equiv \phi(a'_i,b) \}$.   If $c(b)=\emptyset$ 
then $\delta_0(s(a,a');b)=0$ for all $s$.  Otherwise, we   view $\delta(s(a,a');b)$ as a random variable of $s $ (on a finite probability space). More precisely, it is the absolute value of a sum of $|c(b)|$ independent random variables each of expectation 0 and variance $1/n^2$. Therefore $\delta_0(s(a,a');b)$ has expectation $0$, and variance $|c(b)|/n^2$.  By
Tchebychev's inequality we have $|X(b)| \leq |c(b)|/(n \epsilon)^2 \leq \epsilon^{-2}$.
So $|X(b)| / 2^n \leq \epsilon^{-2} 2^{-n}$.

Now $X(b)$
depends only on $\{i: \phi(a_i ,b) \}$ and $\{i: \phi(a'_i,b) \}$; there are polynomially
many possibilities for these sets, by $NIP$.  Hence, $\union_b X(b)$ is an exponentially small subset of $(\Zz/2)^n$.  If $n$ is large enough, it has   proportion $< {\epsilon}$.  Let $s \notin \union_b X(b)$.  Then $|\delta_0(s(a,a'),b)| \leq {\epsilon} $ for all $b$.  If $s \in \union_b X(b)$ we have
at any rate $|\delta_0(s(a,a');b)| \leq 1$.  Thus  
 $2^{-n} \sum_s \sup_b \delta(s(a,a');b) < 2{\epsilon}$.

 By the symmetry of $\mu_n$, $E(n)$ equals the $\mu_n$-expectation of $\sup_a \delta(s(a,a'))$  for any $s   \in (\Zz/2)^n$, hence it is also equal to the average $2^{-n} \sum_s \sup_a \delta(s(a,a'))  $.  So $E(n) < 2{\epsilon}$.  

\end{proof}

\begin{Corollary} (NIP) Let $\mu$ be an $M$-invariant global measure, such that $\mu^{(\omega)}$ is totally indiscernible, then $\mu$ is $fim$.
 \end{Corollary}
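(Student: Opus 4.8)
The plan is to derive the Corollary from Lemma 3.5 by the symmetrization (``ghost sample'') argument that underlies the Vapnik--Chervonenkis theorem.

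Fix $\phi(x,y)\in L$. Since $\mu$ is $M$-invariant, each $\mu^{(n)}$ is an $M$-invariant global measure, and since $\mu^{(\omega)}$ is totally indiscernible, $\mu^{(2n)}$ is invariant under the symmetric group $S_{2n}$ acting on its $2n$ variables; so, with $X$ the sort of $x$, $\mu_n:=\mu^{(2n)}$ is a symmetric, $M$-invariant global measure on $X^{2n}$. Lemma 3.5, applied to $\phi$ and these $\mu_n$, gives $E(n)\to 0$: writing $\delta(\bar x;\bar x')=\sup_{b\in\bar M}|Fr(\phi(x,b),x_{1},\dots,x_{n})-Fr(\phi(x,b),x_{n+1},\dots,x_{2n})|$, the $\mu^{(2n)}$-expectation of $\delta$ tends to $0$; in particular $\mu^{(2n)}(\delta\ge\eta)\to 0$ for every $\eta>0$ by Markov's inequality.

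The crucial step is to turn this \emph{two-sample} estimate into the \emph{single sample versus $\mu$} estimate demanded by $fim$. For a fixed $b\in\bar M$, under $\mu^{(m)}$ the events $\phi(x_{1},b),\dots,\phi(x_{m},b)$ are mutually independent, each of probability $\mu(\phi(x,b))$: apply the integration formula for $\otimes$ to the rectangle $\bigwedge_{i}\phi(x_{i},b)^{\nu(i)}$ (equivalently, $\mu^{(m)}$ is a separated amalgam of its marginals), which uses only invariance of $\mu$. Hence $Fr(\phi(x,b),a_{1},\dots,a_{m})$ has mean $\mu(\phi(x,b))$ and variance $\le 1/(4m)$ under $\mu^{(m)}$, so Chebyshev's inequality makes the standard symmetrization lemma applicable: for $m\ge 1/(2\epsilon^{2})$,
$$\mu^{(m)}\bigl(\bigl\{\bar a:\sup_{b\in\bar M}|Fr(\phi(x,b),\bar a)-\mu(\phi(x,b))|\ge 2\epsilon\bigr\}\bigr)\ \le\ 2\,\mu^{(2m)}(\delta\ge\epsilon)\ \le\ \tfrac{2}{\epsilon}E(m)\ \longrightarrow\ 0 .$$
(The customary measurability caveat --- that on the bad event one selects a near-maximizing $b^{*}(\bar a)$ depending measurably on $\bar a$ --- is harmless here, since for fixed $\bar a$ the map $b\mapsto Fr(\phi(x,b),\bar a)$ is finite-valued with definable level sets and $b\mapsto\mu(\phi(x,b))$ is Borel.)

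Thus, for each $\phi$ and $\epsilon$, the $\mu^{(m)}$-measure of the samples whose empirical frequencies approximate $\mu$ uniformly in $b$ tends to $1$ --- which is the content of ``$\mu$ is $fim$''. To present this in the literal form of Definition 2.7, namely with a first-order formula $\theta_m(x_{1},\dots,x_{m})$, one argues by compactness in the space of Keisler measures exactly as in the first proof of Theorem 3.2, now invoking the definability of the discrepancy functions $\delta$ (they lie in the Boolean algebra generated by the formulas $\exists y\,\bigwedge_{i}\phi(x_{i},y)^{\nu(i)}$ from the proof of Lemma 3.5) in place of Lemma 2.10. I expect the main obstacle to be precisely this symmetrization bridge: Lemma 3.5 controls only the symmetrized (ghost-sample) discrepancy, whereas $fim$ concerns a single sample's distance from the true value $\mu(\phi(x,b))$, and the two are linked only through the ghost-sample trick together with the independence of coordinates under $\mu^{(m)}$; the extraction of an honest first-order $\theta_m$ from the resulting measure estimate, and the measurability of the sets intervening in the symmetrization, are the remaining technical points.
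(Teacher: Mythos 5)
Your first two steps are sound and are essentially the paper's own strategy: apply Lemma 3.5 to the symmetric measures $\mu^{(2n)}$ (symmetry coming from total indiscernibility of $\mu^{(\omega)}$), and use the fact that for a fixed $b$ the coordinate events $\phi(x_i,b)$ are independent under $\mu^{(m)}$ (the separated-amalgam/Fubini property of $\otimes$), so Chebyshev controls a single sample at each fixed $b$. Indeed the bad set $B=\{\bar a:\sup_b|Fr(\phi(x,b),\bar a)-\mu(\phi(x,b))|\ge 2\epsilon\}$ is a union of clopen sets, hence open and measurable, and the symmetrization inequality $\mu^{(m)}(B)\le 2\mu^{(2m)}(\delta\ge\epsilon)$ can be made rigorous via the integral defining $\otimes$, using only the pointwise fiber bound (no measurable selection of $b^*$ is needed). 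So the measure estimate you reach is correct.

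The genuine gap is the last step: $fim$ demands a \emph{formula} $\theta_m(x_1,\dots,x_m)$, all of whose realizations approximate $\mu$ uniformly in $b$, with $\mu^{(m)}(\theta_m)\to 1$. What you control is the complement of $B$, which is only a closed subset of $S_{x_1\dots x_m}(\bar M)$ (an intersection over all $b\in\bar M$ of clopen conditions); a closed set of measure close to $1$ need not contain a definable set of large measure, and regularity only gives closed, not clopen, inner approximations. Your proposed remedy --- ``compactness in the space of Keisler measures exactly as in the first proof of Theorem 3.2'' --- does not fill this: that compactness argument rests on the Claim proved via Corollary 2.13, i.e.\ on Lemma 2.10, which is exactly the ingredient this second proof is meant to avoid, and it is not clear how ``definability of $\delta$'' substitutes for it. The paper closes the gap differently: since $W=\{(a,a'):\delta(a,a')<\epsilon/4\}$ \emph{is} definable (the level sets of $\delta$ are Boolean combinations of the formulas $\exists y\bigwedge_i\phi(x_i,y)^{\nu(i)}$), one fixes a single anchor $a$ with $\mu^{(n)}(W(a))\ge 1-2\epsilon>1/2$ and takes $\theta_n:=W(a)$; the accuracy of every $a''\in W(a)$ is then obtained by intersecting $W(a)$ with the Chebyshev-large set $Q_n(b)$ for each $b$ and passing through a common point by the triangle inequality. (Within your framework the same repair works: your own bound lets you choose the anchor $a$ outside the bad set at tolerance $\epsilon$ and with $\mu^{(m)}(W(a))$ large, and again set $\theta_m=W(a)$.) Some such anchoring step, producing an honest definable $\theta_m$, is indispensable and is missing from your argument.
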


\begin{proof}   
Let $\phi(x,y)$ be a formula, and take $\epsilon >0$. By the previous lemma, for large enough $n$, the set $W=\{(a,a'):\delta(a,a')<\epsilon/4\}$ satisfies $\mu^{(2n)}(W)\geq 1-\epsilon$. Note that this is a definable set. 
Therefore there exists  $a$ such that $\mu^{(n)}(W(a)) \geq 1- 2 \epsilon > 1/2$. (Where $W(a)=\{a':(a,a')\in W\}$.) 
Now fix $a \in \bar M$.  Then for all $b'$, $\delta_0(a,a';b) \leq \delta(a,a')$.

On the other hand
let $Q_n'(b)$ be the set of $a'$ such that $| f(a';b) - \mu(\phi(x,b))| \geq  \epsilon/2$.  
By Tchebychev's inequality, and since the variance of the truth value of $\phi(x,b)$
is at most $1$, we have $\mu^{(n)}(Q_n'(b)) \leq 1 / ( n (\epsilon/2)^2)$.  Let $Q_n(b)$ be the complement of 
$Q_n'(b)$,  and assume $n> 2(\epsilon/2)^{-2}$ (note that this does not depend on $b$).  Then $\mu^{(n)}(Q_n(b))  > 1/2$.   Hence  there exists $a' \in W(a) \cap Q_n(b)$.  So 
$\delta_0(a,a';b) < \epsilon/4$ and $| f(a';b) -\mu(\phi(x,b))| <  \epsilon/2$. Now for any $a'' \in W(a)$, we have $\delta_0(a',a'';b)< \epsilon/2$, therefore $| f(a'',b) - \mu(\phi(x,b)) | < \epsilon$.

So we have found, for large enough $n$ a formula $\theta'_n=W(a)$ satisfying condition (ii) in the definition of $fim$ with $\mu^{(n)}(\theta'_n)\geq 1-2\epsilon$. As this is true for all $\epsilon$, we can construct a sequence of formulas $\theta_n(x_1,..,x_n)$ satisfying the same condition, but with $\mu^{(n)}(\theta_n)\rightarrow 1$. This proves that $\mu$ is $fim$.
\end{proof}

\begin{Lemma}\label{fim2}
Let $\mu$ be an $M$-invariant global $fim$ measure. Let $\phi(x,y)$ be a formula over $M$ and let $X$ be a Borel over $M$ set. Then for any $\epsilon >0$, for some $m$, we can find $(a_1,..,a_m)$ such that for each $b \in \bar M$, $\mu(X \cap \phi(x,b))$ is within $\epsilon$ of $Fr(X \cap \phi(x,b),a_1,..,a_n)$.
\end{Lemma}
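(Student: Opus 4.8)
The plan is to reduce to the case of a \emph{definable} $X$ by sandwiching $X$ between a closed and an open set over $M$ and replacing it by a single approximating formula $\psi(x)$ over $M$, and then to arrange \emph{on one and the same finite sample} both the $fim$-control of $\phi$ and a weak-law-of-large-numbers control of the symmetric difference $X\triangle\psi(x)$.

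\emph{Step 1: approximating $X$ by a formula over $M$.} Since $\mu|M$ is a regular Borel probability measure on $S_x(M)$, for each $\delta>0$ there are a closed set $C$ and an open set $U$ over $M$ with $C\subseteq X\subseteq U$ and $\mu(U\setminus C)<\delta$. Covering the compact set $C$ by finitely many clopen subsets of $U$ and letting $\psi(x)$ over $M$ be their disjunction gives $C\subseteq\psi(x)\subseteq U$, hence $\mu(X\triangle\psi(x))<\delta$. Fix $\delta=\epsilon/8$. Then for every $b\in\bar M$, $|\mu(X\cap\phi(x,b))-\mu(\psi(x)\wedge\phi(x,b))|<\epsilon/8$, and for every tuple $(a_1,\dots,a_m)$, $|Fr(X\cap\phi(x,b),\bar a)-Fr(\psi(x)\wedge\phi(x,b),\bar a)|\le Fr(X\triangle\psi(x),\bar a)$.

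\emph{Step 2: finding the sample.} By Step 1 it suffices, for large $m$, to produce $(a_1,\dots,a_m)$ in $\bar M$ satisfying (a) $|\mu(\psi(x)\wedge\phi(x,b))-Fr(\psi(x)\wedge\phi(x,b),\bar a)|<\epsilon/4$ for all $b\in\bar M$, and (b) $Fr(X\triangle\psi(x),\bar a)<\epsilon/4$; for then the triangle inequality yields $|\mu(X\cap\phi(x,b))-Fr(X\cap\phi(x,b),\bar a)|<\epsilon/8+\epsilon/4+\epsilon/4<\epsilon$. Writing $\psi(x)=\psi_0(x,\bar m)$ with $\psi_0\in L$ and $\bar m\in M$ and applying the $fim$-property of $\mu$ to $\psi_0(x,z)\wedge\phi(x,y)\in L$ with tolerance $\epsilon/4$, we obtain for all large $m$ a formula $\theta_m(x_1,\dots,x_m)$ over $\bar M$ with $\mu^{(m)}(\theta_m)\to1$, all of whose realizations satisfy (a) (specialize $z=\bar m$). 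For (b): the condition $Fr(X\triangle\psi(x),\bar x)<\epsilon/4$ defines a set $B_m$ that is Borel over $M$ and depends only on $(tp(x_1/M),\dots,tp(x_m/M))$; since $\mu^{(m)}$ is a separated amalgam of $\mu_{x_1},\dots,\mu_{x_m}$, its pushforward to $S_x(M)^m$ is the product measure $(\mu|M)^m$, so by the weak law of large numbers (Fact 1.8, applied to $\mu|M$ on $S_x(M)$ and the Borel set $X\triangle\psi(x)$, of measure $<\epsilon/8$) we get $\mu^{(m)}(B_m)\to1$. For $m$ large the intersection of the realization set of $\theta_m$ with $B_m$ is Borel over $\bar M$ of positive $\mu^{(m)}$-measure, hence consistent, and any realization of a type in it is the required tuple.

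The routine but mildly delicate points are the formula-approximation of the Borel set $X$ (regularity plus compactness, Step 1) and the identity of $\mu^{(m)}$ with $(\mu|M)^m$ on events depending only on the individual types $tp(x_i/M)$ — this extends the separated-amalgam property already invoked in the proof of Corollary 2.6 from clopen rectangles to Borel sets by the standard uniqueness argument for Borel measures on $S_x(M)^m$ agreeing on a $\pi$-system. There is no genuine obstacle; the content is simply that the $fim$-control of $\phi$ and the law-of-large-numbers control of $X\triangle\psi(x)$ can be imposed at once on a single sample.
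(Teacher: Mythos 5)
Your proof is correct, but it takes a genuinely different route from the paper's. The paper proves this lemma by rerunning the Vapnik--Chervonenkis-style symmetrization argument of Lemma 3.4 and Corollary 3.5 with the frequency function relativized to $X$, i.e.\ $f(a;b)=Fr(\phi(x,b)\wedge x\in X;a_1,\ldots,a_n)$, using total indiscernibility of $\mu^{(\omega)}$ and observing that the set $W(a)$ is now merely Borel of measure $>1/2$ rather than definable; that argument in effect yields a ``$fim$ for Borel sets'' statement, with a family of good samples of $\mu^{(m)}$-measure tending to $1$. You instead take the $fim$ property as a black box: regularity of $\mu|M$ lets you replace $X$ by a definable approximation $\psi$ with $\mu(X\triangle\psi)$ small, you apply $fim$ to $\psi_0(x,z)\wedge\phi(x,y)$ (absorbing the parameters of $\psi$ into the parameter variable), control $Fr(X\triangle\psi)$ by the weak law of large numbers, and intersect two events each of $\mu^{(m)}$-measure close to $1$. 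This avoids redoing the VC/Chebyshev computation and uses no NIP beyond what is already packaged into $fim$ (in particular no appeal to total indiscernibility), at the cost of producing a single witness tuple rather than a large family. Two points deserve tightening, though neither is a genuine gap: (1) identifying $\mu^{(m)}$ with the product of $\mu|M$ on events depending only on the $tp(x_i/M)$ requires regularity --- both measures are Radon and agree on clopen rectangles, hence on open sets by inner regularity and compactness, hence on all Borel sets --- since $S_x(M)$ need not be second countable, the bare $\pi$-system remark does not by itself reach all the relevant Borel sets; (2) a type in $\theta_m\cap B_m$ is a type over $\bar M$ and so need not be realized in $\bar M$; restrict it to $M$ together with the finitely many parameters of $\theta_m$ and realize that restriction by saturation, noting that membership in $B_m$ depends only on the types over $M$ of the coordinates, so the realization still satisfies both conditions.
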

\begin{proof}
We know that $\mu^{(\omega)}$ is totally indiscernible.
The proof is then a slight modification of the lemma above and its corollary. First, in the lemma, change the definition of $f$ to $f(a;b)=Fr(\phi(x,b)\wedge x\in X;a_1,..,a_n)$. Define $\delta_0$ and $\delta$ accordingly. Then the proof goes through without any difficulties. The corollary also goes through with the new definitions of $f$, $\delta_0$ and $\delta$, only $W$ and $W(a)$ are no longer definable. Still, $W(a)$ is a Borel set of measure greater than $1/2$ and for any $a' \in W(a)$, and any $b$, we have $|f(a',b)-(\mu(\phi(x,b)\cap X))|<\epsilon$.  
\end{proof}

 \vspace{5mm}
 \noindent
 Finally we will point out how generically stable measures are very widespread
 in $NIP$ theories, in fact can be constructed from any indiscernible sequence. 
By an indiscernible {\em segment} we mean $(a_{i}:i\in [0,1])$ which is indiscernible with respect to the ordering on the real unit interval $[0,1]$. For any formula $\phi(x,b)$, $\{i\in [0,1]:\models \phi(a_{i},b)\}$ is a finite union of convex sets, and hence intervals. (See \cite{Adler} for example.) We define a measure $\mu_{x}$ as follows: $\mu(\phi(x,b))$ is the Lebesgue measure of $\{i\in [0,1]:\models \phi(a_{i},b)\}$ (i.e. just the sum of the lengths of the relevant disjoint intervals).
 Clearly $\mu_{x}$ is a global Keisler measure on the sort of $x$. 
 
 \begin{Proposition} The global Keisler measure $\mu_{x}$ constructed above is generically stable.
 \end{Proposition}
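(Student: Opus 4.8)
The plan is to verify condition~(i) of Theorem~3.2 with $A=M$, where $M$ is any small model containing $\{a_i:i\in[0,1]\}$ (such an $M$ exists with $|M|\le|T|+2^{\aleph_0}$, hence small). First, $\mu(\phi(x,b))$ depends only on the truth values of the formulas $\phi(a_i,b)$ for $i\in[0,1]$, hence only on $\mathrm{tp}(b/M)$ since each $a_i\in M$; so $\mu$ is $M$-invariant and Theorem~3.2 applies. Finite satisfiability in $M$ is then immediate: if $\mu(\phi(x,b))>0$ then $\{i\in[0,1]:\models\phi(a_i,b)\}$ has positive Lebesgue measure, so is nonempty, so some $a_i$ (with $i\in[0,1]$, hence $a_i\in M$) realizes $\phi(x,b)$.

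It remains to prove that $\mu$ is definable over $M$, i.e.\ that for each $\phi(x,y)\in L$ the map $p\mapsto\mu(\phi(x,b))$ (any $b\models p$) is continuous on $S_y(M)$; granting this, the preimage of a closed $C\subseteq[0,1]$ is closed over $M$, which is exactly the required condition. Write $J_b=\{i\in[0,1]:\models\phi(a_i,b)\}$, so $\mu(\phi(x,b))=\mathrm{Leb}(J_b)$. By $NIP$ (\cite{Adler}) there is $N=N(\phi)$, independent of $b$, such that $J_b$ is a union of at most $N$ intervals. Fix $b_0$ and $\epsilon>0$, and choose $r_1<\dots<r_L$ in $(0,1)$ — say $r_l=l/(L+1)$ with $L>4N/\epsilon$ — so that each of the gaps $[0,r_1],[r_1,r_2],\dots,[r_L,1]$ has length $<\epsilon/(4N)$. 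Let $\chi(y)=\bigwedge_{l\le L}\phi(a_{r_l},y)^{t_l}$, where $t_l$ is the truth value of $\phi(a_{r_l},b_0)$; this is a formula over $M$ lying in $\mathrm{tp}(b_0/M)$. For any $b\models\chi$, the sets $J_b$ and $J_{b_0}$ agree on every $r_l$, so $J_b\triangle J_{b_0}$ contains no $r_l$; it is a union of at most $4N$ intervals (each of $J_b,J_{b_0}$ being a union of at most $N$), so each of its components lies inside a single gap and has length $<\epsilon/(4N)$, giving $\mathrm{Leb}(J_b\triangle J_{b_0})<\epsilon$ and hence $|\mu(\phi(x,b))-\mu(\phi(x,b_0))|<\epsilon$. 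As $[\chi]$ is a clopen neighbourhood of $\mathrm{tp}(b_0/M)$ in $S_y(M)$, this establishes continuity, so $\mu$ is definable over $M$, and Theorem~3.2 then yields that $\mu$ is generically stable.

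The invariance and finite satisfiability clauses are routine; the one step requiring care is the use of the uniform $NIP$ bound $N(\phi)$ on the number of alternations of $\phi$ along the segment, which drives the definability argument — without uniformity in $b$ the mesh estimate collapses, so this is the crux. (The interval count $4N$ is deliberately generous, and the fact that a finitely additive assignment of this form is a genuine regular Keisler measure is standard.) A less elementary alternative would be to verify condition~(v) by identifying $\mu^{(n)}$ with the pushforward of Lebesgue measure on $[0,1]^n$ under $(i_1,\dots,i_n)\mapsto(a_{i_1},\dots,a_{i_n})$ and applying Fubini, but controlling measurability of the relevant subsets of $[0,1]^n$ in the presence of parameters from $\bar M$ is more delicate than the route sketched above.
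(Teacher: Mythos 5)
Your proof is correct and follows essentially the same route as the paper: finite satisfiability in the set of $a_i$'s is immediate from positivity of Lebesgue measure, and definability comes from the uniform $NIP$ bound $N_\phi$ on the number of intervals of $\{i:\models\phi(a_i,b)\}$ together with a grid of points from the segment of mesh smaller than $\epsilon/N_\phi$. The only cosmetic difference is that the paper packages the grid argument as showing $\mu$ is $fim$ (the frequency over the grid points approximates $\mu(\phi(x,b))$ uniformly in $b$), whereas you use the same grid to verify continuity of $b\mapsto\mu(\phi(x,b))$ on $S_y(M)$, i.e.\ definability directly.
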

 \begin{proof} Let $A = \{a_{i}:i\in [0,1]\}$. We show that $\mu$ is both finitely satisfiable in $A$ and definable over $A$. Finite satisfiability is clear from the definition of $\mu$. (If $\mu(\phi(x,b)) > 0$ then the Lebesgue measure of $C = \{i:\models\phi(a_{i},b)\}$ is $>0$ hence $C\neq \emptyset$.)

To show definability, we in fact note that $\mu$ is $fim$. First note that for any formula $\phi(x,y)$, there is $N_{\phi}$ such that for all $b$ $\{i\in [0,1]:\models\phi(x,b)\}$ is a union of at most $N$ disjoint intervals. Hence, given $\epsilon>0$, if we choose $0< \delta < \epsilon/N$, and let
$i_{k} =  0 + k\delta$ for $k$ such that $k\delta \leq 1$, then for any $b$, $\mu(\phi(x,b))$ is within $\epsilon$ of the proportion of $i_{k}$ such that $\models\phi(a_{i_{k}},b)$. 
 \end{proof}

\section{The $fsg$ property for groups, types and measures}  
We will again make a blanket assumption that $T$ has $NIP$ (but it is not always needed). 
In \cite{NIPI} and \cite{NIPII}  definable groups $G$ with finitely satisfiable generics ($fsg$) played an important  role. This $fsg$ property asserted the existence of a global type of $G$ every left translate of which was finitely satisfiable in some given small model. By definition this is a property of a definable group, rather than of some global type or measure. We wanted to find adequate generalizations of the $fsg$ notion to arbitrary complete types $p(x)$ over small sets, and even arbitrary Keisler measures over small sets. A tentative definition of a complete type $p(x)\in S(A)$ having $fsg$ was given in \cite{NIPII}. We try to complete the picture here, making the connection with generically stable global measures. We should say that the subtlety
of the $fsg$ notion is really present in the case where the set $A$ is NOT bounded closed. In the group case this corresponds to the case where $G\neq G^{00}$. 

We first return to the group case, adding to results from \cite{NIPII}. 

Recall the original definition:
\begin{Definition} The definable group $G$ has $fsg$ if there is a global complete type $p(x)$ of $G$ and a small model $M_{0}$ such that every left translate of $p$ is finitely satisfiable in $M_{0}$.
\end{Definition}

As pointed out in \cite{NIPI}  $M_{0}$ can be chosen as any model over which $G$ is defined. 
In \cite{NIPII} the notion was generalized to type-definable groups $G$.

\begin{Remark} The definable group $G$ has $fsg$ if and only if there is global left invariant Keisler measure $\mu$ on $G$ which is finitely satisfiable in some (any) small model $M_{0}$ over which $G$ is defined.
\end{Remark}
\begin{proof}
Assuming that $G$ has $fsg$, Proposition 6.2 of \cite{NIPI}  produces the required measure $\mu$. Conversely, supposing that $\mu$ is a global left invariant Keisler measure on $G$, finitely satisfiable in $M_{0}$, let $p$ be some global type of $G$ such that $\mu(\phi) > 0$ for all $\phi\in p$. Namely $p$ comes from an ultrafilter on the Boolean algebra of definable subsets of $G$ modulo the equivalence relation $X\sim Y$ if $\mu(X\triangle Y) = 0$. Then every left translate of every formula in $p$ has $\mu$ measure $> 0$ so is realized in $M_{0}$. 
\end{proof}

The following strengthens the  ``existence and uniqueness" of (left/right) $G$-invariant global Keisler measures for $fsg$ groups $G$, from \cite{NIPII}. But the proof is somewhat simpler, given the results established in earlier sections.

\begin{Theorem} Suppose $G$ has $fsg$, witnessed by a global left invariant Keisler measure $\mu$ finitely satisfiable in small $M_{0}$. 
Then 
\newline
(i) $\mu$ is the unique left invariant global Keisler measure on $G$, as well as the unique right invariant Keisler measure on $G$.
\newline
(ii) $\mu$ is both the unique left $G^{00}$-invariant global Keisler measure, as well as the unique right $G^{00}$-invariant global Keisler measure on $G$, which extends Haar measure $h$ on $G/G^{00}$.
\end{Theorem}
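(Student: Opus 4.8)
The plan is to derive Theorem 4.4 from the generic stability machinery of Section 3, exploiting the fact that for $fsg$ groups the invariant measure $\mu$ will turn out to be generically stable. First I would establish generic stability of $\mu$: since $\mu$ is finitely satisfiable in $M_0$, it is automatically Borel definable (as remarked after Definition 1.2 in the NIP context); for the definability clause, I would argue that left invariance together with finite satisfiability forces $\mu$ to commute with itself, or more directly invoke one of the equivalent conditions of Theorem 3.2 --- e.g. show that $\mu \otimes \mu = \mu \otimes \mu$ in the reversed order by a translation argument, using that $\mu$ is both left invariant and finitely satisfiable. Once $\mu$ is generically stable, Proposition 3.3 gives that $\mu$ is the unique $bdd(M_0)$-invariant (equivalently nonforking) extension of its restriction to $M_0$.

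For part (i): suppose $\nu$ is any left invariant global Keisler measure on $G$. Every left translate of every positive-measure formula of $\nu$ has positive $\nu$-measure, but I want to compare $\nu$ with $\mu$. The standard trick (as in \cite{NIPI}, \cite{NIPII}) is to form the convolution $\mu * \nu$, defined via $(\mu * \nu)(\phi(x)) = \int \nu(g^{-1}\phi)\, d\mu(g)$ using that $\mu$ is generically stable hence its associated function is Borel; left invariance of $\nu$ gives $\mu * \nu = \nu$, while left invariance of $\mu$ gives $\mu * \nu = \mu$ (here one uses that $\mu$ is finitely satisfiable, so integrating a constant-in-$g$ quantity is valid, together with Fubini-type commutativity available because $\mu$ is generically stable and commutes with every invariant measure by Theorem 3.2(iv)). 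Hence $\nu = \mu$. For right invariance: if $\lambda$ is right invariant, consider $\lambda * \mu$; right invariance of $\lambda$ and left invariance of $\mu$ combine to give $\lambda * \mu = \lambda$ and $\lambda * \mu = \mu$ respectively, so again $\lambda = \mu$. This also shows $\mu$ is itself right invariant (apply the uniqueness of right invariant measures, once we produce one --- e.g.\ $\mu$ pushed forward under inversion is right invariant and left-invariant-comparable, or directly: $\mu$ commutes with its own translates).

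For part (ii): now drop full invariance and assume only left $G^{00}$-invariance plus the normalization that the pushforward to $G/G^{00}$ is Haar measure $h$. Given two such measures $\nu_1, \nu_2$, I would again convolve with $\mu$: since $\mu$ is left (indeed $G$-, hence $G^{00}$-) invariant and generically stable, $\mu * \nu_i$ is left $G$-invariant, so by part (i) $\mu * \nu_i = \mu$; on the other hand, because $\nu_i$ is $G^{00}$-invariant and its reduction mod $G^{00}$ is $h$, the convolution $\mu * \nu_i$ can be computed by first integrating over cosets of $G^{00}$ --- $\mu$ descends to a measure on $G/G^{00}$ which by part (i) applied to the compact group $G/G^{00}$ (where left invariant = Haar) equals $h$ --- and the $G^{00}$-invariance of $\nu_i$ means the "fibre" contribution is determined. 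One shows $\mu * \nu_i = \nu_i$, whence $\nu_i = \mu$. The analogous argument with $\nu_i * \mu$ handles right $G^{00}$-invariance, and shows $\mu$ satisfies the normalization (its reduction is $h$ by uniqueness of Haar measure on $G/G^{00}$).

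The main obstacle I anticipate is making the convolution arguments rigorous: one must check that $\mu * \nu$ is well-defined (needs Borel definability of $\mu$, which we have) and that the two ways of evaluating it agree, which is exactly a commutativity/Fubini statement. This is where generic stability of $\mu$ is essential --- Theorem 3.2(iv) guarantees $\mu$ commutes with every invariant measure, and the paragraph after Corollary 2.8 lets us reduce commutativity to checking it against weakly random types. A secondary technical point is the coset-fibering computation in (ii): one needs that a $G^{00}$-invariant measure with prescribed reduction mod $G^{00}$ is determined by that reduction together with a "conditional" measure on $G^{00}$, and that $G^{00}$ being bounded-index makes the relevant integrals behave; here invariance under the small-index $G^{00}$ effectively collapses the fibre to a point's worth of data. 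I would organize the write-up so that the generic-stability reduction is stated once and then reused for both (i) and (ii).
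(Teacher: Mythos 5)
Your overall strategy---compare an arbitrary invariant measure with $\mu$ by a Fubini/commutativity computation---is in the right spirit (the paper does this via Lemma 3.1), but there are genuine gaps. First, every commutativity tool you invoke (Lemma 3.1, Theorem 3.2(iv), the remark after Corollary 2.8) requires the second measure to be definable, or at least invariant over a small model; an arbitrary left invariant (or left $G^{00}$-invariant) global Keisler measure $\nu$ is not known to be either, and without this the function $g\mapsto\nu(g^{-1}\phi)$ need not be measurable, so your convolution is not even well defined in the direction you write it. The paper handles this by first reducing, via 5.8 of \cite{NIPII}, to the case where the competing measure is definable; you never make this reduction. Second, in (i) the claim that ``left invariance of $\mu$ gives $\mu*\nu=\mu$'' is wrong as stated: after swapping the order one gets $\int\mu(\phi h^{-1})\,d\nu(h)$, which needs \emph{right} invariance of $\mu$, and that is part of the conclusion---so as written the argument is circular. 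The paper avoids this by computing the product measure of $Z=\{(g,h):g\in hD\}$ in both orders using only left invariance of each measure, obtaining $\nu=\mu^{-1}$, and then taking $\nu=\mu$ to get $\mu=\mu^{-1}$; your parenthetical about pushing forward under inversion gestures at this but does not carry it out. (Also, your preliminary step of proving $\mu$ generically stable is both unnecessary for this theorem---Lemma 3.1 only needs $\mu$ finitely satisfiable and the other measure definable---and unsupported as sketched: the ``translation argument'' for $\mu_x\otimes\mu_y=\mu_y\otimes\mu_x$ is not given.)

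The most serious problem is (ii). Since by (i) $\mu$ is the unique (two-sided) invariant measure, convolving $\mu$ with \emph{anything} returns $\mu$: both evaluations of $\mu*\nu_i$ give $\mu$, and the identity $\mu*\nu_i=\nu_i$ you need is simply false for a merely $G^{00}$-invariant $\nu_i$ (just as convolution with Haar measure on a compact group forgets the other measure entirely). So the convolution computation cannot distinguish $\nu_i$ from $\mu$ and yields no uniqueness. The paper's proof of (ii) uses the $G^{00}$-invariance of $\lambda$ quantitatively: after reducing to $\lambda$ definable, for a definable $X$ with $\lambda(X)=r$ and $\epsilon>0$ one finds a definable $Y\supseteq G^{00}$ with $\lambda(gX)\in(r-\epsilon,r+\epsilon)$ for all $g\in Y$, takes $U=\pi^{-1}(W)\subseteq Y$ the pullback of an open neighbourhood of the identity in $G/G^{00}$ (so $\mu(U)>0$), and computes the two iterated integrals of $Z=\{(h,g):h\in gX,\ g\in U\}$; using that $\mu(hX^{-1}\cap U)$ depends only on $h/G^{00}$, that $\lambda$ and $\mu$ both induce Haar measure on $G/G^{00}$, and Lemma 3.1 applied twice, one gets $(r-\epsilon)\mu(U)\leq\mu(X)\mu(U)\leq(r+\epsilon)\mu(U)$, hence $\mu(X)=r$. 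Some localization of this kind near $G^{00}$ is essential, and it is absent from your proposal.
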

\begin{proof}
(i) This is precisely 7.7 of \cite{NIPII}. But note that we can use Lemma 3.1 of the present paper and the relatively soft 5.8 of \cite{NIPII} in place of 7.3 and 7.6 of \cite{NIPII}. (Details: Suppose $\lambda$ is also global left invariant. By the Lemma 5.8 of \cite{NIPII}, we may assume $\lambda$ to be definable. Given definable subset $D$ of $G$, let 
$Z= \{(g,h):g\in hZ\}$. By 3.1, $\mu_{x}\otimes \lambda_{y} = \lambda_{y}\otimes \mu_{x}$, whence  $(\mu_{x}\otimes \lambda_{y})(Z) = \mu(D)$ and
$(\lambda_{y}\otimes\mu_{x})(X) = \lambda(D^{-1})$. So $\lambda = \mu^{-1}$. 
This in particular yields that $\mu = \mu^{-1}$. So $\lambda = \mu$ and $\mu$ is also the unique right invariant Keisler measure.)
\newline
(ii) Note first that $\mu_{x}$ induces a left invariant measure on $G/G^{00}$ which has to be (normalized) Haar measure, and of course $\mu$ is (left/right) $G^{00}$-invariant. Let $\lambda_{y}$ be another global left $G^{00}$-invariant Keisler measure extending (or inducing) Haar measure on $G/G^{00}$. As in 5.8 of \cite{NIPII} we may assume that $\lambda$ is definable. 

Let $X$ be a definable subset of $G$. Let $r = \lambda(X)$. Choose $\epsilon > 0$. As $G^{00}$ stabilizes $\lambda$, and $\lambda$ is definable, there is a definable subset $Y$ of $G$ which contains $G^{00}$ and such that for all $g\in Y$, $\lambda(gX)\in (r-\epsilon,r+\epsilon)$. Let $\pi:G\to G/G^{00}$ be the canonical surjective homomorphism. Then $\{c\in G/G^{00}: \pi^{-1}(c) \subseteq Y\}$ is an open neighbourhood $W$ of the identity in $G/G^{00}$. Let $U = \pi^{-1}(W)\subseteq Y \subseteq G$. Note that $\mu(U) > 0$ (as it equals the Haar measure of the open subset $W$ of $G/G^{00}$).
We have
\newline
(1) for all $g\in U$, $\lambda(gX) \in (r-\epsilon, r+\epsilon)$. 
\newline
Whence
\newline
(2)  $(r-\epsilon)\mu(U) \leq \int_{g\in U}\lambda(gX)d\mu \leq (r+\epsilon)\mu(U)$.
\newline
Now let $Z = \{(h,g):h\in gX$ and $g\in G\}$. Then
\newline
(3) $(\lambda_{y}\otimes \mu_{x})(Z) = \int_{g\in U}\lambda_{y}(gX)d\mu_{x}$. 
\newline
On the other hand clearly
\newline
(4) $(\mu_{x}\otimes\lambda_{y})(Z) = \int_{h\in G}\mu_{x}(hX^{-1} \cap U)d\lambda_{y}$. 
\newline
Note that the value of $\mu_{x}(hX^{-1}\cap U)$ depends only on $h/G^{00}$, hence as $\lambda$ and $\mu$ agree ``on $G/G^{00}$", we see that
\newline
(5)  $(\mu_{x}\otimes \lambda_{y})(Z) = (\mu_{x}\otimes\mu_{y})(Z)$.
\newline
By 3.1 applied twice (to $(\mu_{x}\otimes \lambda_{y})$ AND to 
$(\mu_{x}\otimes\mu_{y})$) together with (5) we see that
\newline
(6) $(\lambda_{y}\otimes \mu_{x})(Z) = (\mu_{y}\otimes\mu_{x})(Z)$. 
\newline
But $(\mu_{y}\otimes\mu_{x})(Z) = \int_{g\in U}\mu_{y}(gX)d\mu_{x} = \mu(X)\mu(U)$. 
\newline
So using (2) and (3) we
 see that
 \newline
 (7)  $(r-\epsilon)\mu(U) \leq \mu(X)\mu(U) \leq (r+\epsilon)\mu(U)$.
 \newline
 So $r-\epsilon \leq \mu(X) \leq r+\epsilon$. As this is true for all $\epsilon$ we conclude that $\mu(X) = r = \lambda(X)$.

\end{proof}

\begin{Remark} The definable group $G$ has $fsg$ if and only if $G$ has a global generically stable left invariant measure.
\end{Remark}

We now consider the general situation. We first recall the definition from \cite{NIPII}:
\begin{Definition} $p(x)\in S(A)$ has $fsg$ if $p$ has a global extension $p'$ such that  for any formula $\phi(x)\in p'$ and $|A|^+$-saturated model $M_{0}$ containing $A$, there is $a\in M_{0}$ realizing $p$ such that $\models \phi(a)$.
\end{Definition}

\begin{Lemma} $p(x)\in S(A)$ has $fsg$ iff there is a global $A$-invariant measure
$\mu_{x}$ extending $p$ such that whenever $\phi(x)$ is a formula over $\bar M$ with $\mu$-measure $> 0$, then for any $|A|^+$-saturated model $M_{0}$ containing $A$, there is $a\in M_{0}$ realizing $p$ such that $\models\phi(a)$. 
\end{Lemma}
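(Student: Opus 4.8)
The plan is to prove the two implications separately. The right-to-left implication is the easy one: given such a $\mu$, let $p'$ be a global complete type \emph{weakly random} for $\mu$, i.e. one coming from an ultrafilter on the Boolean algebra of global definable sets modulo the ideal of $\mu$-null sets, so that $\mu(\phi)>0$ for every $\phi\in p'$. Since $\mu$ extends $p$, for $\psi(x)\in p$ over $A$ we have $\mu(\neg\psi)=0$, hence $\psi\in p'$, so $p'|A=p$; and for any $\phi\in p'$ the hypothesis on $\mu$ (applied to $\phi$, which has positive measure) yields, for every $|A|^+$-saturated $M_0\supseteq A$, some $a\in M_0$ realizing $p$ with $\models\phi(a)$. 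That is exactly Definition 4.5, so $p$ has $fsg$.

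For the converse, assume $p$ has $fsg$, witnessed by a global extension $p'$. For $|A|^+$-saturated $M_1\supseteq A$ let $p(M_1)$ be its set of realizations of $p$, let $K_{M_1}$ be the closure in $S_x({\bar M})$ of $\{tp(a/{\bar M}):a\in p(M_1)\}$, and put $K=\bigcap_{M_1}K_{M_1}$ over all such $M_1$. I would first record the properties of $K$: it is closed; it is nonempty, since $fsg$ of $p$ via $p'$ says precisely that every finite conjunction of members of $p'$ is realized in every $p(M_1)$, i.e. $p'\in K$; it is $Aut({\bar M}/A)$-invariant, since $\sigma\in Aut({\bar M}/A)$ fixes $A$ and carries $p(M_1)$ onto $p(\sigma M_1)$, so $\sigma K_{M_1}=K_{\sigma M_1}$ and $\sigma K=K$; and any global Keisler measure $\mu$ with $\mu(K)=1$ automatically extends $p$ (as $K\subseteq[\psi]$ for $\psi\in p$, writing $[\psi]$ for the clopen set of global types containing $\psi$) and has the stated property (if $\mu(\phi)>0$ then the open set $[\phi]$ meets $K$, hence meets each $\{tp(a/{\bar M}):a\in p(M_1)\}$, providing the required $a\in M_1$). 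So the problem reduces to producing an $Aut({\bar M}/A)$-invariant probability measure concentrated on $K$.

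That reduction isolates the only genuine difficulty, and it is where $NIP$ must be used: I would obtain the invariant measure on $K$ by the method used for the group case (the argument cited in the proof of Remark 4.2, i.e. Proposition 6.2 of \cite{NIPI}), adapted to $K$. The point is that every $q\in K$ is finitely satisfiable in any fixed small $|A|^+$-saturated $M_0\supseteq A$, hence $M_0$-invariant, so on $K$ the clopen set $[\phi(x,c)]$ depends only on $tp(c/M_0)$; combined with the boundedness under $NIP$ of the number of instances of a formula up to any given positive measure (the fact recalled in the introduction), this forces the $Aut({\bar M}/A)$-module of continuous functions on $K$ to carry an invariant mean, whose associated measure is the desired $\mu$. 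Concretely I would, for each $\phi(x,y)$ and $\epsilon>0$, find $a_1,\dots,a_n\in p(M_0)$ (with $M_0$ chosen saturated and homogeneous enough) whose averaging measure is within $\epsilon$ of being $Aut({\bar M}/A)$-invariant on all instances of $\phi$, and then pass to a limit of such measures along a suitable net. I expect this amenability-style step to be the main obstacle; note that the cheap alternative — finitely many $A$-conjugates of some $\phi\in p'$ covering $K$ — is in fact false (a complete type over ${\bar M}$ lying in $K$ can avoid every $A$-conjugate of $\phi$ that lives in ${\bar M}$ while still implying the corresponding existential over $A$), so a compactness argument will not suffice. Everything else is the bookkeeping with $K$ carried out above.
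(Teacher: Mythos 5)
Your right-to-left direction is exactly the paper's (take a weakly random type for $\mu$), and your reduction of the left-to-right direction — build a closed, $Aut(\bar M/A)$-invariant set $K$ of global types all of whose members are "finitely satisfiable in $p(M_1)$ for every $|A|^+$-saturated $M_1\supseteq A$", and observe that any $A$-invariant measure concentrating on $K$ does the job — is correct bookkeeping. But the heart of the proof, producing the $A$-invariant measure, is not actually carried out: you say you would adapt the group-case construction by finding finite averages of types realized in $p(M_0)$ that are approximately $Aut(\bar M/A)$-invariant and passing to a limit, and you yourself flag this "amenability-style step" as the main obstacle. No argument is given that such approximately invariant finite averages exist, and none is in sight: $Aut(\bar M/A)$ is not amenable in any relevant sense, and even in the group case the invariant measure of Proposition 6.2 of \cite{NIPI} does not come from an invariant-mean argument but from averaging a bounded orbit over the Haar measure of a compact quotient ($G/G^{00}$).

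The missing idea, which is how the paper closes this gap, is that the $fsg$ witness $p'$ is automatically a \emph{nonforking} extension of $p$ (7.12(i) of \cite{NIPII}) — a fact your proposal never uses or states. Under $NIP$ this makes $p'$ $bdd(A)$-invariant, so its $Aut(\bar M/A)$-orbit is bounded and is a homogeneous space of the compact Lascar group $Gal_{KP}$ over $A$; Borel definability of $p'$ then lets one define $\mu$ by averaging this orbit against Haar measure on $Gal_{KP}$ (this is Proposition 4.7 of \cite{NIPII}). Invariance of $\mu$ is built in, $\mu$ extends $p$, and any formula of positive $\mu$-measure lies in some $Aut(\bar M/A)$-conjugate $\sigma(p')$; since the property in Definition 4.5 is invariant under automorphisms fixing $A$, each such conjugate also satisfies it, which yields the required realization of $p$ in every $|A|^+$-saturated $M_0\supseteq A$. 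In short: the correct "adaptation of the group case" is a bounded-orbit/Haar-measure average made possible by nonforking of $p'$, not an approximate-invariance limit, and without that input your argument does not go through.
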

\begin{proof}  RHS implies LHS. This is trivial because any weakly random type for the measure $\mu$ will satisfy Definition 4.5.
\newline
LHS implies RHS:  Let $p'$ be as given by Definition 4.5. Then  7.12 (i) of \cite{NIPII} says that $p'$ is a nonforking extension of $p$. Moreover it is clear that any $Aut({\bar M}/A)$-conjugate of $p'$ also satisfies Definition 4.5. Let $\mu$ be the global $A$-invariant measure extending $p$, constructed from $p'$ in Proposition 4.7 of \cite{NIPII}. Then any formula with positive $\mu$-measure must be in some $Aut({\bar M}/A)$-conjugate of $p'$ so is satisfied in any saturated model $M_{0}$ containing $A$ by a realization of $p$. 
\end{proof}

The last lemma motivates a definition of $fsg$ for arbitrary measures over $A$.
\begin{Definition} Let $\mu_{x}$ be a Keisler measure over $A$. We say that $\mu$ has $fsg$, if $\mu$ has a global $A$-invariant extension $\mu'$ such that for any Borel over $A$ set $X$, formula $\phi(x)$ over $\bar M$, and $|A|^+$-saturated model $M_{0}$ containing $A$, if $\mu'(X\cap\phi(x)) > 0$ then there is $a\in M_{0}$ such that $a\in X$ and $\phi(a)$.
\end{Definition}

\begin{Theorem} 
Let $\mu_x$ be a measure over a set $A$, then the following are equivalent :
\newline
(i) $\mu$ has a unique $A$-invariant global extension $\mu'$ that is moreover generically stable.
\newline
(ii) $\mu$ has $fsg$.
\newline
(iii) $\mu$ has a global $A$-invariant extension $\mu'$ such that for any Borel over $A$ set $X$, formula $\phi(x)$ over $\bar M$, and $|A|^+$-saturated model $M_{0}$ containing $A$, if $\mu(X)+\mu(\phi(x)) > 1$ then there is $a\in M_{0}$ such that $a\in X$ and $\phi(a)$.
\end{Theorem}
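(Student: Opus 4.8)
The plan is to prove the cycle of implications $(i)\Rightarrow(ii)\Rightarrow(iii)\Rightarrow(i)$, using the characterizations of generic stability already established in Theorem 3.2, in particular property (vi) (Borel satisfiability) and the commutativity property (iv).

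\textbf{$(i)\Rightarrow(ii)$.} Suppose $\mu$ has a unique $A$-invariant global extension $\mu'$ which is generically stable. We must verify the defining condition of $fsg$ for measures (Definition 4.7) using this $\mu'$. By Theorem 3.2(vi), there is a small model $M_1$ containing $A$ with the property that for any Borel over $A$ set $X$ and any formula $\phi(x)$ over $\bar M$, if $\mu'(X\cap\phi(x))>0$ then $X\cap\phi(x)$ is realized in $M_1$. The issue is that Definition 4.7 demands realization in \emph{every} $|A|^+$-saturated model $M_0\supseteq A$ by an element of $X$ (here ``$X$'' plays the role of the earlier ``realizing $p$''). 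Here I would argue: given such $M_0$, and $\phi(x)$ over $\bar M$ with $\mu'(X\cap\phi(x))>0$, first note that by regularity and $A$-invariance we may find finitely many $M$-conjugates of a suitable small piece of data so that the relevant set, intersected with a set type-definable over $A$ of full measure and realized in $M_0$ (since $M_0$ is $|A|^+$-saturated, it realizes all types over $A$), meets $X$. More carefully: property (vi) gives a small $M_1$; apply an automorphism over $A$ sending the finitely many parameters of $\phi$ into $M_0$ is not quite available since $\mu'$ is not finitely satisfiable in $M_1$ alone over those parameters — so instead I would mimic the structure of the proof of Lemma 4.6 (LHS implies RHS), replacing the single type $p'$ by the measure $\mu'$ and ``weakly random type'' by ``positive measure Borel set,'' and use that any $Aut(\bar M/A)$-conjugate of $\mu'$ equals $\mu'$ by $A$-invariance, together with saturation of $M_0$ to realize the needed type over $A$ inside $M_0 \cap X$.

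\textbf{$(ii)\Rightarrow(iii)$.} This is immediate: if $\mu(X)+\mu(\phi(x))>1$ then $\mu(X\cap\phi(x))\ge \mu(X)+\mu(\phi(x))-1>0$ (finite additivity), and we also have $\mu'(X\cap\phi(x))>0$ since $\mu'$ extends $\mu$ and $X$, $\phi(x)$ may be assumed to live over a model (or one reduces to the case of $\phi(x)$ over $\bar M$ by the usual approximation, noting $\mu'(X)=\mu(X)$ for $X$ Borel over $A$). So the $fsg$ extension $\mu'$ from (ii) witnesses (iii).

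\textbf{$(iii)\Rightarrow(i)$.} This is the main obstacle and the heart of the theorem. The strategy should parallel how (vi) of Theorem 3.2 is meant to imply the other conditions (the excerpt says this is ``postponed to Lemma 3.6 and Theorem 4.8''). Given the $A$-invariant global extension $\mu'$ from (iii), I would aim to show $\mu'$ is generically stable via Theorem 3.2(vi): i.e., produce a small $M_0\supseteq A$ such that $\mu'(X\cap\phi(x))>0$ implies $X\cap\phi(x)$ is realized in $M_0$. The trick to get from the ``measure sum $>1$'' hypothesis to the ``intersection $>0$'' conclusion of (vi) is to apply (iii) not to $\mu'$ directly but to a \emph{localization}: given $X$ and $\phi(x)$ with $\mu'(X\cap\phi(x))=\delta>0$, consider the localized measure $\mu'_{X\cap\phi(x)}$ — however localization need not be $A$-invariant, so instead I would use a high power $\mu'^{(n)}$ and a frequency/averaging argument, or more simply iterate: form $\mu'^{(2)}=\mu'_{x_1}\otimes\mu'_{x_2}$ and the Borel set $(X\cap\phi(x_1))$ in variable $x_1$; replacing $X$ by $X\times(x_2{=}x_2)$ etc.\ does not boost the measure. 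The cleaner route: apply (iii) with $\phi$ replaced by a formula of large measure approximating $x\in X\cap\phi_0$ via a finitely-satisfiable-type argument — since once we know (vi) holds, $\mu'$ is finitely satisfiable in a small model by Theorem 3.2, and then the bootstrapping in the proof of $(iii)\Rightarrow$(vi) for types (Lemma 7.12-style reasoning from \cite{NIPII}) adapts. Concretely, I expect the real content is: (iii) forces $\mu'$ to be finitely satisfiable in every $|A|^+$-saturated model, hence (being $A$-invariant, NIP) definable over $bdd(A)$; combined with $A$-invariance this should be pushed to $A$-definability and then, since $\mu'$ is both definable and finitely satisfiable in a small model, Theorem 3.2(i) gives generic stability; uniqueness then follows from Proposition 3.3. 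The delicate point — and where I expect to spend the most effort — is extracting ``$\mu'(X\cap\phi)>0$'' realizability in a \emph{fixed small} $M_0$ from the a priori weaker ``sum $>1$'' condition over all saturated models, which is exactly the gap that Theorem 4.8 (referenced but not in this excerpt) is designed to close; the proof there presumably uses the weak law of large numbers (Fact 1.8) to trade a positive-measure set against a disjunction of translates whose measures sum past $1$.
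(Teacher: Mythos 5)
There is a genuine gap, and it is located exactly where the theorem's content lies: the implication (iii)$\Rightarrow$(i). Your plan there is circular: you propose to conclude generic stability from Theorem 3.2(vi), but in the paper the implication from (vi) to the other conditions of Theorem 3.2 is itself only established via this very Theorem 4.8, so it is not available to you. Your fallback chain is also broken: from (iii) one does get (taking $X$ of full measure) that $\mu'$ is finitely satisfiable in every $|A|^+$-saturated model containing $A$, but ``finitely satisfiable $+$ $A$-invariant $\Rightarrow$ definable over $bdd(A)$'' is false as stated --- invariance under NIP yields only \emph{Borel} definability, and finite satisfiability does not upgrade Borel definability to definability (otherwise every invariant measure finitely satisfiable in a small model would be generically stable and the theorem would be trivial); the further step ``pushed to $A$-definability'' has no justification. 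The paper's actual argument is different: fix the extension $\mu'$ witnessing (iii) and show it commutes with \emph{every} $A$-invariant measure $\lambda$. One writes $P=(\mu'\otimes\lambda)(\phi)$, $R=(\lambda\otimes\mu')(\phi)$, introduces the Borel-over-$A$ sets $B_t=\{p\in S(A):\lambda(\phi(a,y))\geq t$ for $a\models p\}$ and $C_t$, approximates $\lambda$ by the average $\tilde\lambda$ of types $p_1,\dots,p_n$ over an $|A|^+$-saturated model $M$ via Corollary 2.8, realizes them by $b_1,\dots,b_n$, and compares with the definable sets $\Theta_k(x)$ (``at least $k$ of the $\phi(x,b_i)$ hold''). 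The trace inclusions $B_{k/n+\epsilon}(M)\subseteq\Theta_k(M)\subseteq B_{k/n-\epsilon}(M)$ are converted into measure inequalities $\mu(B_{k/n+\epsilon})\leq\mu(\Theta_k)\leq\mu(B_{k/n-\epsilon})$ precisely by hypothesis (iii): if, say, $\mu(B_{k/n+\epsilon})>\mu(\Theta_k)$ then $\mu(B_{k/n+\epsilon})+\mu(\neg\Theta_k)>1$ and (iii) produces a point of $M$ in $B_{k/n+\epsilon}$ satisfying $\neg\Theta_k$, a contradiction. Summing gives $|P-R|=O(\epsilon)$, so $\mu'$ commutes with every $A$-invariant measure (in particular with itself), whence $\mu'^{(\omega)}$ is totally indiscernible, $\mu'$ is generically stable by Theorem 3.2, and uniqueness follows from Proposition 3.3. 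Your guess that the proof trades a positive-measure set against ``a disjunction of translates whose measures sum past $1$'' via the weak law of large numbers is not how the argument goes.

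The direction (i)$\Rightarrow$(ii) in your write-up is also under-specified, though the needed tool is in the paper: Lemma 3.6 (the $fim$ property relative to a Borel set). Writing $\phi(x)=\psi(x,b)$ with external parameter $b$, that lemma gives tuples $(a_1,\dots,a_m)$ whose frequencies approximate $\mu'(X\cap\psi(x,b'))$ \emph{uniformly in $b'$}; because of this uniformity the set of witnessing tuples is Borel over $A$ of positive $\mu'^{(m)}$-measure, hence by regularity contains a closed-over-$A$ set of positive measure, which is realized in any $|A|^+$-saturated $M_0\supseteq A$, and some coordinate of the realizing tuple lies in $X\cap\phi$. Your sketch (conjugation by $Aut(\bar M/A)$ plus ``realize the needed type over $A$ inside $M_0\cap X$'') never explains how a point whose type over $A$ cannot see $b$ is forced to satisfy $\phi(x,b)$; the uniformity in $b$ coming from the frequency approximation is exactly the missing idea. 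The remaining direction (ii)$\Rightarrow$(iii) is correct and agrees with the paper.
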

\begin{proof}  (i) implies (ii):  Follows from Lemma \ref{fim2}.
\newline
(ii) implies (iii) is clear.
\newline
(iii) implies (i): We fix global $A$-invariant measure $\mu'$ extending $\mu$ and witnessing the assumption.
We will prove that $\mu'$ commutes with every $A$-invariant measure. It will follow
that $\mu'^{(\omega)}|A$ is totally indiscernible, so $\mu'$ is generically stable. Uniqueness follows from 3.3.  The proof will be a bit like that of 3.1. In fact it is easy to see that $\mu$ commutes with any $A$-invariant type. If $A = bdd(A)$ this would suffice (as every $bdd(A)$-invariant measure is ``approximated" by $bdd(A)$-invariant types). But for arbitrary $A$ it does not seem to suffice.

So let us fix an $A$-invariant (thus Borel definable over $A$) global measure $\lambda_{y}$. 
\newline
Let $P=(\mu_{x}\otimes\lambda_{y})(\phi(x,y))=\int\mu(\phi(x,b))d\lambda$
and $R=(\lambda_{y}\otimes\mu_{x})(\phi(x,y))=\int\lambda(\phi(a,y))d\mu$.
We want to show that $P=R$.

For any $t\in[0,1]$, let $C_{t}=\{q\in S(A):\mu(\phi(x,b))\geq t\mbox{ for any }b\models q\}$ and $B_t=\{p\in S(A):\lambda(\phi(a,y))\geq t\mbox{ for any }a\models p\}$. These sets are Borel over $A$.

Let $\epsilon>0$ and take $N\geq1/\epsilon$ such that $ $\[
\left|P-\frac{1}{N}\sum_{k=0}^{N-1}\lambda(C_{k/N})\right|\leq\epsilon.\]

Take a model $M$ containing $A$ and $|A|^{+}$-saturated. By Corollary 2.8,
there exist $n$ and $p_{1},..,p_{n}\in S(M)$ such that $Fr(\phi(a;y);p_{1},..,p_{n})$
is within $\epsilon$ of $\lambda(\phi(a;y))$ for every $a\in M$
and $Fr(C_{k/N};p_{1},..,p_{n})$ is within $\epsilon$ of $\lambda(C_{k/N})$
for every $k<N$. Realize $p_{1},..,p_{n}$ in $\bar{M}$ by $b_{1},..,b_{n}$
respectively. Call $\tilde{\lambda}$ the average measure of $b_{1},..,b_{n}$
(seen as global measures).

By construction, we have\[
\left|\frac{1}{N}\sum_{k=0}^{N-1}\lambda(C_{k/N})-\frac{1}{N}\sum_{k=0}^{N-1}\tilde{\lambda}(C_{k/N})\right|\leq\epsilon.\]

On the other hand, for all $k<N$ :\[
\tilde{\lambda}\left(C_{k/N}\right)=\frac{1}{n}\left|\{i:p_{i}\in C_{k/N}\}\right|=\frac{1}{n}\left|\left\{ i:\mu\left(\phi(x,b_{i})\right)\geq k/N\right\} \right|.\]

It follows that :\[
\left|\frac{1}{N}\sum_{k=0}^{N-1}\tilde{\lambda}\left(C_{k/N}\right)-\frac{1}{n}\sum_{i=1}^{n}\mu(\phi(x,b_{i}))\right|\leq\frac{1}{N}\leq\epsilon.\]

Now, for $k\leq N$ let $\Theta_{k}(x)$ be the formula that says
{}``There are at least $k$ values of $i$ for which $\models\phi(x,b_{i})$
holds''. Then (looking at the Venn diagram generated by the sets
$\phi(x,b_{i})$ and counting each time each region appears in both
sums) we see that\[
\frac{1}{n}\sum_{k=1}^{n}\mu\left(\Theta_{k}(x)\right)=\frac{1}{n}\sum_{i=1}^{n}\mu\left(\phi(x,b_{i})\right).\]

Call $P'$ the value of those two sums.

By the construction of $\tilde{\lambda}$, we have the inclusions
$B_{k/n+\epsilon}(M)\subseteq\Theta_{k}(M)\subseteq B_{k/n-\epsilon}(M)$,
and $fsg$ for $\mu$ implies that $\mu(B_{k/n+\epsilon})\leq\mu(\Theta_{k}(x))\leq\mu(B_{k/n-\epsilon})$. 

Then, choosing $l$ such that $l/n\leq2\epsilon$,\[
\frac{1}{n}\sum_{k=1}^{n}\mu\left(B_{k/n+l/n}\right)\leq P'\leq\frac{1}{n}\sum_{k=1}^{n}\mu\left(B_{k/n-l/n}\right).\]

The difference between the two sums to the right and to the left of
$P'$ is at most $8\epsilon$. We may assume that $n$ was choosen
large enough so that $|R-\frac{1}{n}\sum_{k=1}^{n}\mu(B_{k/n})|\leq\epsilon.$
This latter sum satisfies the same double inequality as $P'$. Therefore
$|P'-R|\leq8\epsilon$. Putting everything together, we see that $|P-R|\leq11\epsilon$.

As $\epsilon$ was arbitrary, we are done.
\end{proof}

\section{Generic compact domination}
In \cite{NIPI} the authors introduced the notion of ``domination" or control 
of a 
type-definable set $X$ by a compact space $C$ equipped with a measure (or ideal) $\mu$: namely there is a ``definable" surjective function $\pi:X\to C$ such that for every (relatively) definable subset $Y$ of $X$, for almost all $c\in C$ in the sense of $\mu$, either $\pi^{-1}(c)\subseteq Y$ or $\pi^{-1}(c)\cap Y = \emptyset$. Here of course $X$, $\pi$ are defined over a fixed set $A$ of parameters, and $Y$ is definable with arbitrary parameters. There was also a ``group version" where $X = G$ is a (type)-definable group, $C$ is a compact group, $\pi$ a homomorphism, and $\mu$ is Haar measure on $C$. 

In this section we consider a weaker version of compact domination where $X$ is replaced by a suitable space of ``generic" types, and we expand on and correct some results which had appeared in a first version of \cite{NIPII}. 

We view this weak domination as a kind of measure-theoretic weakening of the finite equivalence relation theorem. Let us begin by explaining this interpretation.  If $T$ is a stable theory, and $p(x)\in S(A)$ a type, then the finite equivalence relation theorem states that the set of global nonforking extensions of $p$ is in one-one correspondence with the set of extensions of $p$ over $acl(A)$. Namely if $\cal P$ is the family of global nonforking extensions of $p$ then the restriction map $\pi$ taking $p'\in {\cal P}$ to $p'|acl(A)$ is a bijection. In fact these two sets, ${\cal P}$ and the set $C$ of extensions of $p$ over $acl(A)$, are both topological (Stone) spaces and the restriction map $\pi$ is a homeomorphism. 
We consider a more general situation, where $T$ has $NIP$ and $acl(A)$ is replaced by $bdd(A)$. Assuming $p$ does not fork over $A$, we again have the nonempty space ${\cal P}$ of global nonforking extensions of $p$, the space $C$ of extensions of $p$ over $bdd(A)$ and $\pi:{\cal P} \to C$. 
Now $C$ is clearly a homogeneous space for the compact Lascar group $Gal_{KP}$ over $A$, and hence is equipped with a unique $Gal_{KP}$-invariant (normalized) Borel measure, $h$ say. In this situation a weaker statement than $\pi$ being a homeomorphism is that ``${\cal P}$ is dominated by $(C,\pi,h)$": for every clopen subset $X$ of ${\cal P}$  (i.e., $X$ is given by a formula over ${\bar M}$), for almost all $c\in C$ in the sense of $h$, either $\pi^{-1}(c)\subseteq X$ or $\pi^{-1}(c)\cap X = \emptyset$. We will point out that this domination statement is equivalent to $p$ having a unique extension to a global $A$-invariant measure. In particular it will hold when $p(x)$ has $fsg$. In fact all this holds with a Keisler measure $\mu$ over $A$ in place of the complete type $p$. Subsequently we consider appropriate group versions.

We will again be assuming that $T$ has $NIP$ but it is not always needed.

\begin{Lemma} Let $\mu_{x}$ a Keisler measure over $A$. Then there is a unique Keisler measure $\mu'_{x}$ over $bdd(A)$ which extends $\mu_{x}$ and is 
$Aut(bdd(A)/A)$-invariant.
\end{Lemma}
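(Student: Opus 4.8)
The plan is to exploit the compactness of $G := Aut(bdd(A)/A)$, the group of elementary permutations of $bdd(A)$ induced by $Aut({\bar M}/A)$. Recall from \cite{HKP}, \cite{Wagner} that $G$ is a compact Hausdorff group, that it acts continuously on $S_{x}(bdd(A))$, and that this action is compatible with the restriction map $\rho\colon S_{x}(bdd(A))\to S_{x}(A)$; since every element of $G$ fixes $A$ pointwise, $G$ acts trivially on $S_{x}(A)$, so $\rho$ is $G$-equivariant. Note also that, under the identification of Keisler measures with regular Borel probability measures on type spaces, a Keisler measure $\mu'$ over $bdd(A)$ extends $\mu$ precisely when $\rho_{*}\mu'=\mu$.

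For existence, I would first apply Lemma 1.3(ii) to get some Keisler measure $\nu$ over $bdd(A)$ extending $\mu$, then average: letting $h$ be the normalized Haar measure on $G$, set $\mu'(B)=\int_{G}\nu(g\cdot B)\,dh(g)$ for $B$ Borel over $bdd(A)$. The map $g\mapsto\nu(g\cdot B)$ is Borel (for $B$ closed it is upper semicontinuous, by continuity of the action and the fact that indicators of closed sets are upper semicontinuous, and one then passes to Borel $B$), so the integral makes sense and defines a regular Borel probability measure. Right-invariance of $h$ makes $\mu'$ a $G$-invariant measure, and if $B$ is over $A$ then $g\cdot B=B$ for all $g$, giving $\mu'(B)=\nu(B)=\mu(B)$; hence $\mu'$ is as wanted.

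For uniqueness the crucial input is that $G$ acts transitively on each fibre of $\rho$: if $q_{1},q_{2}\in S_{x}(bdd(A))$ restrict to the same $p\in S_{x}(A)$, realize them by $a_{1},a_{2}$ in ${\bar M}$; then $a_{1}\equiv_{A}a_{2}$, so some $g_{0}\in Aut({\bar M}/A)$ carries $a_{1}$ to $a_{2}$, and the element of $G$ it induces carries $q_{1}$ to $q_{2}$. Thus each fibre $\rho^{-1}(p)$ is a compact homogeneous $G$-space, hence carries a unique $G$-invariant Borel probability measure. Now let $\mu'$ be any $G$-invariant extension of $\mu$ over $bdd(A)$, and let the clopen set $\phi(x,e)$ over $bdd(A)$ be given, with $e\in bdd(A)$; let $O=G\cdot e$, a compact homogeneous $G$-space with invariant probability measure $h_{O}$. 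On $S_{x}(bdd(A))\times O$ consider the Borel function $\Phi(q,o)$ equal to $1$ if $\phi(x,o)\in q$ and $0$ otherwise, and put $F(q)=\int_{O}\Phi(q,o)\,dh_{O}(o)$. Using transitivity on fibres together with $G$-invariance of $h_{O}$, one checks that $F(q)$ depends only on $\rho(q)$, so $F=\bar F\circ\rho$ for some Borel $\bar F$ on $S_{x}(A)$. Integrating $F$ against $\mu'$ in two ways finishes it: by Fubini and $G$-invariance of $\mu'$ (which makes $o\mapsto\mu'(\phi(x,o))$ constant on $O$, with value $\mu'(\phi(x,e))$) we get $\int F\,d\mu'=\mu'(\phi(x,e))$, while $\rho_{*}\mu'=\mu$ gives $\int F\,d\mu'=\int\bar F\,d\mu$. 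So $\mu'(\phi(x,e))=\int_{S_{x}(A)}\bar F\,d\mu$, independently of $\mu'$; since $\phi(x,e)$ was an arbitrary clopen set over $bdd(A)$ this forces $\mu'$ to be unique.

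I expect the only real difficulty to be notational housekeeping around the hyperimaginary setup: knowing that $Aut(bdd(A)/A)$ is compact and acts continuously, that the fibres of the restriction map are bona fide compact homogeneous spaces (with transitivity as the key algebraic fact), and that the sets and functions above are Borel so that Haar averaging and Fubini apply. Granting these standard facts, both halves of the argument are essentially formal, and indeed the construction in the existence proof and the formula $\mu'(\phi(x,e))=\int\bar F\,d\mu$ extracted in the uniqueness proof are two faces of the same disintegration $\mu'=\int_{S_{x}(A)}h_{p}\,d\mu(p)$ over the fibres of $\rho$.
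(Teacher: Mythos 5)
Your proof is correct and rests on exactly the facts the paper uses: each fibre of the restriction map $S_x(bdd(A))\to S_x(A)$ is a homogeneous space for the compact group $Aut(bdd(A)/A)$, hence carries a unique invariant measure $\mu_p$, and any invariant extension is forced to be the disintegration $\mu'(B)=\int_{S_x(A)}\mu_p(B)\,d\mu$ --- which is precisely how the paper defines $\mu'$ for existence. Your Haar-averaging of an arbitrary extension yields the same measure, and your Fubini/invariance computation spells out the uniqueness check that the paper leaves to the reader, so this is essentially the paper's argument.
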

\begin{proof} We identify a Keisler measure on $bdd(A)$ with a regular probability measure on
$S_{x}(bdd(A))$.  Likewise identify $\mu$ with a measure on $S(A)$. Now for each $p(x)\in S(A)$, there is clearly a unique $A$-invariant Keisler measure on $S(bdd(A))$ extending $p$, which we call $\mu_{p}$: the space of extensions of $p$ over $bdd(A)$ is, as mentioned above, a homogeneous space for the compact Lascar group $Aut(bdd(A)/A)$ so has a unique $A$-invariant measure, which is precisely $\mu_{p}$. Now define $\mu'$ as follows: for a Borel set $B$ over $bdd(A)$, put
$\mu'(B) = \int_{p\in S(A)} \mu_{p}(B)d\mu$. $\mu'$ is clearly $A$-invariant and we leave to the reader to check uniqueness.

\end{proof} 

\begin{Lemma} Suppose $\mu_{x}$ is a Keisler measure over $A$ which does not fork over $A$. Then $\mu$ has a global $A$-invariant extension.
\end{Lemma}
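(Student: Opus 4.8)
The plan is to reduce the statement to the type case via a compactness/extension argument, using the characterization of non-forking recalled in the preliminaries (for a global measure over $bdd(A)$, non-dividing, non-forking, and $bdd(A)$-invariance coincide in an $NIP$ theory). First I would pass to $bdd(A)$: by Lemma 5.1, $\mu_x$ has a unique $Aut(bdd(A)/A)$-invariant extension $\mu'_x$ to a Keisler measure over $bdd(A)$, and since $\mu$ does not fork over $A$, every formula of positive $\mu'$-measure does not fork over $A$, hence does not divide over $bdd(A)$ (here one uses that $A$ may be taken to be a set of hyperimaginaries, as remarked in the introduction). So without loss of generality we may assume $A = bdd(A)$, and the goal becomes: a non-forking Keisler measure over $bdd(A)$ has a global $bdd(A)$-invariant extension.

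Next I would build the extension directly as a regular Borel probability measure on $S_x(\bar M)$, or equivalently on the appropriate fragment $F$ of partial types. The natural candidate is obtained by a Zorn's lemma / compactness argument in the compact space of Keisler measures: consider the set of Keisler measures $\nu$ on (fragments extending) $F$ that extend $\mu$ and such that every formula of positive $\nu$-measure does not fork over $A$. This set is nonempty (it contains $\mu$) and, being closed in the compact space of measures, a suitable maximal or limit element gives a measure defined on all of the global sort. The point requiring care is that the "non-forking" constraint is preserved in the limit: if a formula $\phi(x,c)$ has positive measure in a limit measure, then it already has positive measure in one of the approximating measures (by the definition of the weak-$*$ topology on measures and finite additivity), so it does not fork over $A$. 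Invariance over $bdd(A)$ then follows automatically, since in $NIP$ a global non-forking measure over $A$ is $Aut(\bar M/bdd(A)) = Aut(\bar M/A)$-invariant by the result of \cite{NIPII} recalled in Section 1.

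Alternatively, and perhaps more cleanly, I would invoke Keisler's extension principle (stated in Section 1: any Keisler measure on a fragment $F$ extends to a global one) to first get \emph{some} global extension $\mu_1$ of $\mu$, and then average $\mu_1$ over the group $Aut(\bar M/A)$ — but since that group is not compact this averaging is not literally integration, so one instead takes a limit point, in the compact space of global measures, of the net of translates $\sigma_*\mu_1$ for $\sigma \in Aut(\bar M/A)$, combined with the finite-satisfiability-style trick of Lemma 1.5. The main obstacle is exactly this last point: ensuring that the resulting global measure both still restricts to $\mu$ on $A$ and genuinely does not fork (equivalently is $bdd(A)$-invariant), rather than acquiring forking formulas of positive measure in passing to the limit. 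I expect that the cleanest route handles both at once: one shows that the set of global measures extending $\mu$ and concentrating (in the sense of giving measure one to) the non-forking ideal is nonempty and compact, which is where the $NIP$ hypothesis and the equivalence of non-dividing with invariance do the real work.
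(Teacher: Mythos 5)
There is a genuine gap, and it is exactly at the point your proposal waves through: the passage from $bdd(A)$-invariance to $A$-invariance. Your ``without loss of generality $A=bdd(A)$'' reduction is not valid. After extending $\mu$ to the canonical $Aut(bdd(A)/A)$-invariant measure $\mu'$ over $bdd(A)$ (Lemma 5.1), a global nonforking extension of $\mu'$ is, by the results of \cite{NIPII}, only guaranteed to be invariant under $Aut(\bar M/bdd(A))$; it need not be invariant under the larger group $Aut(\bar M/A)$, whose elements act nontrivially on $bdd(A)$ through the compact Lascar group. Your assertion that ``invariance follows automatically, since a global non-forking measure over $A$ is $Aut(\bar M/bdd(A))=Aut(\bar M/A)$-invariant'' rests on an equality of groups that is false precisely when $A\neq bdd(A)$, which is the only case in which the lemma has content (the paper stresses this point in Section 4). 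Invariance of $\mu'$ over $bdd(A)$ under $Aut(bdd(A)/A)$ does not force any particular global extension of $\mu'$ to be $A$-invariant: conjugating a global nonforking extension by an automorphism fixing $A$ but moving $bdd(A)$ can produce a different global extension with the same restriction to $bdd(A)$.

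The missing idea is the averaging step that constitutes the paper's proof: take \emph{some} global nonforking extension $\lambda$ of $\mu$ (this part of your first construction is fine in spirit), note by \cite{NIPII} that $\lambda$ is $bdd(A)$-invariant \emph{and Borel definable over $bdd(A)$}, and then, for each formula $\phi(x,b)$, average over the compact Lascar group: letting $Q$ be the space of extensions of $tp(b/A)$ over $bdd(A)$, a homogeneous space for $Aut(bdd(A)/A)$ with its unique invariant measure $h$, set $\mu''(\phi(x,b))=\int_{b'\in Q}\lambda(\phi(x,b'))\,dh$. Borel definability of $\lambda$ over $bdd(A)$ is what makes this integral meaningful, and the resulting $\mu''$ is $A$-invariant by construction and extends $\mu$. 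Your alternative suggestion, taking a limit point of the net of translates $\sigma_*\mu_1$ for $\sigma\in Aut(\bar M/A)$, does not repair this: since the group is not compact, a limit point of an orbit in the space of measures need not be a fixed point of the action, and you give no mechanism to force invariance. So the proposal correctly handles the comparatively routine existence of a nonforking global extension but omits the step that is the actual content of the lemma.
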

\begin{proof} Let $\lambda_{x}$ be some global nonforking extension of $\mu$. By \cite{NIPII}, $\lambda$ is $bdd(A)$-invariant and moreover Borel definable over $bdd(A)$. Fix a formula $\phi(x,b)$. Let $Q$ be the set of complete extensions of $tp(b/A)$ over $bdd(A)$. As above $Q$ is a homogeneous space for the compact Lascar group over $A$ and inherits a corresponding measure $h$ say. Define $\mu''(\phi(x,b)) = \int_{b'\in Q} \lambda(\phi(x,b'))dh$. 
\end{proof}

\begin{Lemma} Suppose $\mu_{x}$ is a Keisler measure over $A $ which has a unique global nonforking
$A$-invariant. Then for any closed subset $B$ of $S_{x}(A)$ of positive $\mu$-measure, the localization $\mu_{B}$ of $\mu$ at $B$ also has a unique global nonforking extension.
\end{Lemma}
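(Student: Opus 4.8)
The plan is to reduce stationarity for $\mu_B$ to stationarity for $\mu$, exploiting the fact that $\mu$ is a convex combination of its localizations at $B$ and at the complement $\neg B$. If $\mu(B)=1$ then $\mu_B=\mu$ and there is nothing to prove, so assume $0<\mu(B)<1$. Let $\mu'$ denote the unique global nonforking extension of $\mu$. Regard the closed set $B\subseteq S_x(A)$ as a partial type over $A$; its realizations in $\bar M$, equivalently the preimage of $B$ under the restriction map $S_x(\bar M)\to S_x(A)$, form a closed subset of $S_x(\bar M)$, and since $\mu'$ restricts to $\mu$ on the algebra of formulas over $A$, regularity of $\mu'$ as a Borel measure gives $\mu'(B)=\mu(B)>0$ and $\mu'(\neg B)=1-\mu(B)>0$. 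Hence the localizations $(\mu')_B$ and $(\mu')_{\neg B}$ are global Keisler measures. A routine check shows $(\mu')_B$ restricts to $\mu_B$ on formulas over $A$ (the normalisations match because $\mu'(B)=\mu(B)$), and that it does not fork over $A$: if $(\mu')_B(\phi)>0$ for a formula $\phi$ over $\bar M$ then $\mu'(\phi\cap B)>0$, so $\mu'(\phi)>0$, so $\phi$ does not fork over $A$. Thus $\mu_B$ has at least one global nonforking extension, and symmetrically so does $\mu_{\neg B}$; fix $\nu:=(\mu')_{\neg B}$, a global nonforking extension of $\mu_{\neg B}$.

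For uniqueness, let $\lambda_1,\lambda_2$ be any two global nonforking extensions of $\mu_B$, and set $\rho_i:=\mu(B)\,\lambda_i+(1-\mu(B))\,\nu$ for $i=1,2$. Then $\rho_i$ is a global Keisler measure which, on a formula $\psi(x)$ over $A$, takes the value $\mu(B)\mu_B(\psi)+(1-\mu(B))\mu_{\neg B}(\psi)=\mu(\psi)$, so $\rho_i$ extends $\mu$; and if $\rho_i(\phi)>0$ for a formula $\phi$ over $\bar M$ then $\lambda_i(\phi)>0$ or $\nu(\phi)>0$, so in either case $\phi$ does not fork over $A$, whence $\rho_i$ does not fork over $A$. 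Therefore $\rho_1$ and $\rho_2$ are both global nonforking extensions of $\mu$, and by the stationarity hypothesis $\rho_1=\rho_2$ (indeed both equal $\mu'$). Cancelling the common summand $(1-\mu(B))\nu$ and dividing by $\mu(B)>0$ yields $\lambda_1=\lambda_2$.

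I do not expect a genuine obstacle here: the content is essentially that the "conditioning" operation $\mu\mapsto\mu_B$ preserves both nonforking and the property of having a unique nonforking extension, because convex combinations of nonforking global measures are again nonforking and restrict correctly to $A$. The only care needed is bookkeeping—identifying the closed set $B\subseteq S_x(A)$ with a partial type so that it simultaneously names a closed subset of $S_x(\bar M)$, and checking via regularity that $\mu'(B)=\mu(B)$ so that localization commutes with restriction to $A$. (If one prefers not to route existence through $\mu'$, Lemma 5.3 applied to the measures $\mu_B$ and $\mu_{\neg B}$ over $A$—each of which does not fork over $A$, since positive measure of a formula in the localization forces positive $\mu$-measure, hence nonforking—directly supplies the needed global $A$-invariant, hence nonforking, extensions.)
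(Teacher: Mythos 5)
Your proof is correct and follows essentially the same route as the paper's: localize the unique global nonforking extension $\mu'$ at $B$ and $B^c$, and observe that mixing any nonforking extension of $\mu_B$ with the $B^c$-part yields a global nonforking extension of $\mu$, which by hypothesis must equal $\mu'$. The only cosmetic differences are that you handle the trivial case $\mu(B)=1$ separately and compare two arbitrary extensions $\lambda_1,\lambda_2$, whereas the paper compares a single putative extra extension $\lambda$ against $\mu'_B$ and writes the mixture directly as $\mu''(X)=\lambda(X)\mu(B)+\mu'(X\cap B^c)$.
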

\begin{proof}  Let $\mu'$ be the unique global nonforking extension of $\mu$. Then $\mu'_{B}$ is clearly a nonforking extension of $\mu_{B}$. If it is not the unique one, let $\lambda$ be another nonforking extension of $\mu_{B}$. 
Define global measure $\mu''$, by  $\mu''(X) = \lambda(X).\mu(B) +  \mu'(X\cap B^{c})$  (where $B^{c}$ is the complement of $B$ in $S(A)$ and $X$ a definable set). Note that $\mu''$ also extends $\mu$ and does not fork over $A$. (If $\mu''(X) > 0$, then either $\lambda(X) > 0$ or $\mu'(X) > 0$, and either way, $X$ does not fork over $A$). However by choosing $X$ such that $\lambda(X) \neq \mu'_{B}(X)$ we see that $\mu''(X) \neq \mu(X)$, contradicting our assumption.

\end{proof}

Let us set up notations for the upcoming main theorem. We fix a set $A$ (not necessarily equal to $bdd(A)$), and let $\mu_{x}$ be a Keisler measure over $A$. We will assume that $\mu$ does not fork over $A$, and we let ${\cal P}$ be the set of global types $p(x)$ that do not fork over $A$. This is a closed subspace of $S_{x}({\bar M})$. Let $C$ be $\{p(x)|bdd(A): p\in {\cal P}\}$, a closed subspace of $S_{x}(bdd(A))$ and $\pi:{\cal P}\to C$ the restriction map.  Let $\mu'$ be the (unique) $A$-invariant extension of $\mu$ over $bdd(A)$ given by Lemma 5.1. Then $\mu'$ induces (and is determined by) a Borel probability measure on $C$ which we still call $\mu'$. 

\begin{Theorem}  $\mu$ has a unique global nonforking extension if and only if ${\cal P}$ is dominated by $(C,\pi,\mu')$. 
\end{Theorem}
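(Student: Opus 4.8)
The statement is an "if and only if" between a uniqueness property of nonforking extensions and a domination statement. I would prove the two directions separately, exploiting the measure $\mu'$ on $C$ and the fibers $\pi^{-1}(c)$.

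First I would set up the basic dictionary. For $c \in C$, the fiber $\pi^{-1}(c)$ is the set of global nonforking extensions of the type $c \in S_x(bdd(A))$; since $c$ is a $bdd(A)$-type, each such fiber is nonempty (a type over $bdd(A)$ has a global nonforking, hence $bdd(A)$-invariant, extension). A global nonforking extension $\lambda$ of $\mu$ is, via disintegration along $\pi$, the same data as a Borel map $c \mapsto \lambda_c$ assigning to $\mu'$-almost every $c \in C$ a type in $\pi^{-1}(c)$ — more precisely a probability measure on $\pi^{-1}(c)$ — so that $\lambda(X) = \int_C \lambda_c(X \cap \pi^{-1}(c))\, d\mu'$. (The $bdd(A)$-invariant extension $\mu'$ of $\mu$ from Lemma 5.1 is exactly the one where we spread uniformly, using $Gal_{KP}$-invariance, over each $\pi^{-1}(\mathrm{tp})$.) Domination of ${\cal P}$ by $(C,\pi,\mu')$ says precisely: for every clopen $X \subseteq {\cal P}$ (i.e.\ $X$ given by a formula over $\bar M$), for $\mu'$-almost all $c$, the fiber $\pi^{-1}(c)$ is entirely inside $X$ or entirely outside $X$ — equivalently the function $c \maps\lambda_c(X)$ is $\{0,1\}$-valued $\mu'$-a.e., and is therefore independent of the choice of the section $\lambda_c$.

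For the direction "domination $\Rightarrow$ uniqueness": if domination holds, then for any global nonforking extension $\lambda$ of $\mu$ and any formula $\phi(x,b)$ over $\bar M$, writing $X = \phi(x,b) \cap {\cal P}$, we have $\lambda(\phi(x,b)) = \int_C \lambda_c(X)\,d\mu' = \mu'(\{c : \pi^{-1}(c) \subseteq X\})$, a quantity depending only on $\mu'$ and $X$, not on $\lambda$. Hence any two global nonforking extensions agree on every formula, so $\mu$ has a unique global nonforking extension. (One must note such extensions exist by Lemma 5.2, and that forking = dividing for global measures, as recalled in the preliminaries, so "nonforking" is well-behaved.)

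For the converse "uniqueness $\Rightarrow$ domination": suppose domination fails. Then there is a formula $\phi(x,b)$ over $\bar M$ and a Borel set $D \subseteq C$ with $\mu'(D) > 0$ such that for each $c \in D$, the fiber $\pi^{-1}(c)$ meets both $\phi(x,b)$ and $\neg\phi(x,b)$. Using the Borel definability of nonforking extensions over $bdd(A)$ (from \cite{NIPII}) to ensure measurability, I would build two distinct global nonforking extensions of $\mu$: take $\mu'$ as the "default" nonforking extension (more precisely, start from \emph{some} global nonforking extension $\nu$ of $\mu$, which restricts appropriately), and on the positive-measure set $D$ of fibers choose, for one extension, a section landing in $\phi(x,b)$, and for the other a section landing in $\neg\phi(x,b)$ — outside $D$ keep them equal. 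More cleanly: the hypothesis produces, for each $c \in D$, two types $p_c^1 \in \pi^{-1}(c) \cap \phi$, $p_c^0 \in \pi^{-1}(c) \cap \neg\phi$; by Kuratowski–Ryll-Nardzewski / Borel selection one gets Borel sections $c \mapsto p_c^i$ on $D$, and integrating these against $\mu'|_D$ and gluing with a common nonforking extension off $D$ yields measures $\lambda_1 \neq \lambda_0$, each a global nonforking extension of $\mu$ (nonforking because on each fiber the chosen type is $bdd(A)$-invariant and nonforking over $A$, and the construction off $D$ is nonforking), with $\lambda_1(\phi(x,b)) \geq \mu'(D) + \lambda_0(\phi(x,b) \cap \pi^{-1}(D^c)) > \lambda_0(\phi(x,b))$, wait — more simply $\lambda_1(\phi(x,b) \cap \pi^{-1}(D)) = \mu'(D) > 0 = \lambda_0(\phi(x,b) \cap \pi^{-1}(D))$, contradicting uniqueness.

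**The main obstacle.** The genuinely delicate point is the measurable selection of sections $c \mapsto \lambda_c$ and, in the converse, the Borel selection of $p_c^1, p_c^0$ inside the fibers, together with checking that the glued-together measure really is a global measure that does not fork over $A$. The nonforking part should follow because a formula with positive measure in the glued measure has positive measure in one of the building blocks, each of which is nonforking; the measurability part is where one leans on the Borel definability of nonforking/invariant extensions established in \cite{NIPII}, and on the fact that $C$, ${\cal P}$ are nice (closed subspaces of Stone spaces, hence compact and metrizable enough for standard Borel selection theorems to apply after restricting to a countable sublanguage — which suffices since a single formula $\phi$ involves only countably much data). I would handle this by reducing, for a fixed $\phi$, to a countable fragment so that all the relevant spaces are standard Borel, apply a Borel uniformization theorem, and then verify the nonforking condition formula-by-formula using localization (Lemma 5.3) to reduce to the generic situation on $D$.
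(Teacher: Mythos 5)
Your overall architecture (compute any invariant extension from $\mu'$ using domination; conversely, from a failure of domination on a positive-measure set $D$ of fibers manufacture two extensions disagreeing on $\phi$) is the same as the paper's, but there are two genuine gaps. First, in the direction ``domination $\Rightarrow$ uniqueness'' you disintegrate an arbitrary global \emph{nonforking} extension $\lambda$ over the base measure $\mu'$, i.e.\ you use $\lambda(\pi^{-1}(D))=\mu'(D)$ for Borel $D\subseteq C$. But a nonforking extension is only $bdd(A)$-invariant, and its restriction to $bdd(A)$ need not be the $Aut(bdd(A)/A)$-invariant measure $\mu'$ of Lemma 5.1; so the identity $\lambda(\phi)=\int_C\lambda_c(X)\,d\mu'$ is unjustified, and the determination of $\lambda$ genuinely fails for merely $bdd(A)$-invariant extensions (think of a non-stationary type in a stable theory: fibers are singletons, so domination is trivial, yet there are several nonforking extensions). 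The paper's proof takes $\lambda$ to be a global \emph{$A$-invariant} extension, so that $\lambda|bdd(A)=\mu'$ by the uniqueness clause of Lemma 5.1 (that is the reading of ``unique nonforking extension'' the proof actually establishes, cf.\ the discussion before the theorem; in the converse direction it first reduces to $A=bdd(A)$, where the notions coincide). Moreover no disintegration theorem is needed, and its existence on these non-metrizable type spaces is itself not routine: once $\lambda\circ\pi^{-1}=\mu'$, one simply sandwiches $X$ between $\pi^{-1}(D_1)$ and $\pi^{-1}(D_1\cup D_0)$ with $\mu'(D_0)=0$.

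Second, your converse hinges on a Borel selection $c\mapsto p_c^{i}$ of global types inside the fibers. The relevant spaces are Stone spaces over the monster model: compact Hausdorff but not second countable, so Kuratowski--Ryll-Nardzewski/uniformization does not apply directly, and your proposed fix (pass to a countable sublanguage) is exactly where the difficulty is hidden: having chosen a type of a countable reduct in each fiber, you must extend it to a \emph{complete global} type which still does not fork over $A$ and still contains $\phi$, and that extension step is the real content. The paper avoids selection altogether: after reducing to $A=bdd(A)$ it localizes $\mu$ at $D$ (Lemma 5.3), and transports $\mu_D$ under the map $p\mapsto r_p=p\cup\{\phi\}\cup\{\neg\psi:\psi \mbox{ forks over } bdd(A)\}$, a homeomorphism onto a closed subset of the type space of a fragment; any global Keisler measure extending the resulting fragment measure (Lemma 1.3(ii)) gives every forking formula measure zero, hence is a nonforking extension of $\mu_D$ with $\nu_1(\phi)=1$, and symmetrically one gets $\nu_2(\phi)=0$, contradicting Lemma 5.3. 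Your gluing and ``positive measure passes to a nonforking building block'' bookkeeping is fine in spirit (it is essentially what Lemma 5.3 packages), but as written the measurable-selection step is a gap, and the fragment construction is the device the paper uses to close it.
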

\begin{proof} Assume the right hand side. Let $\lambda$ be any global $A$-invariant Keisler measure extending $\mu$ (there is one by Lemma 5.2). We will show that for any definable set $X$, the value of $\lambda(X)$ is determined, independently of $\lambda$. We may identify $\lambda$ with a measure on ${\cal P}$, and $X$ with a clopen subset of ${\cal P}$. 

Note first that $\lambda|bdd(A) = \mu'$. Hence 
\newline
(*) for any Borel subset $D$ of $C$, $\lambda(\pi^{-1}(D)) = \mu'(D)$. 

Now, given our definable set $X$, let $D_{0} = \{c\in C: \pi^{-1}(c)\cap X \neq \emptyset$ and $\pi^{-1}(c)\cap X^{c})\neq \emptyset\}$. 
Then $D_{0}\subseteq C$ is closed with $\mu'$ measure $0$ by assumption. $C\setminus D_{0}$ is the disjoint union of Borel sets $D_{1}$ and $D_{2}$, where $\pi^{-1}(c)\subseteq X$ for $c\in D_{1}$ and $\pi^{-1}(c)\cap X = \emptyset$ for $c\in D_{2}$. It is then clear that $\lambda(X) = 
\lambda(\pi^{-1}(D_{1}))$ which equals $\mu'(D_{1})$ by (*). So $\lambda$ is determined.

For the converse, assume that $\mu$ has a unique global $A$-invariant extension, say $\lambda$. It is clear that $\lambda|bdd(A) = \mu'$ and that $\lambda$ must be the unique global nonforking extension of $\mu'$. Also $\cal P$ coincides with the set of global types $p(x)$ which do not fork over $bdd(A)$. HENCE we may assume that $A = bdd(A)$, and so $\mu = \mu'$. 
If the domination statement fails, there is a closed subset $D$ of $C$ of positive $\mu$-measure and formula $\phi(x)$ over ${\bar M}$, such that 
$\pi^{-1}(c)$ intersects both $\phi(x)$ and $\neg\phi(x)$ for all $c\in D$. Hence for every 
$p(x)\in D$, both $p(x)\cup\{\phi(x)\}$ and $p(x)\cup\{\neg\phi(x)\}$ do not fork over $A$. 
Let $\mu_{D}$ be the localization of $\mu$ at $D$. 
\newline
{\em Claim.} There are $\nu_{1}, \nu_{2}$, global nonforking extensions of $\mu_{D}$ such that $\nu_{1}(\phi(x)) = 1$ and
$\nu_{2}(\phi(x)) = 0$. 
\newline
{\em Proof of claim.}  Consider the fragment $G$ generated by the partial types over $bdd(A)$, $\phi(x)$ and the set $\Psi$ of formulas $\psi(x)$ (over 
${\bar M}$) which fork over $bdd(A)$. For each $p\in D$, let $r_{p} = p(x)\cup \{\phi(x)\} \cup \{\neg\psi(x):\psi\in \Psi\}$. Then $D' = \{r_{p}:p\in D\}$ is a closed subset of $S(G)$ and $f: D \to D'$ defined by $f(p) = r_{p}$ is a homeomorphism. Using $f$ to define a measure $\nu_{1}'$ supported on $D'$ gives an extension of $\mu$ which assigns $1$ to $\phi(x)$. Any extension of $\nu_{1}'$ to a global Keisler measure $\nu_{1}$ is a nonforking extension of $\mu$ assigning $1$ to $\phi(x)$.

Likewise we find $\mu_{2}$. 

\vspace{2mm}
\noindent
So the claim is proved and gives a contradiction. This completes the proof of the Theorem.

\end{proof}

Note that a version of Theorem 5.4 also holds, where we take instead $C$ to be the space of extensions over $bdd(A)$ of $\mu$-weakly random types $p(x)\in S(A)$, and ${\cal P}$ to be the space of global nonforking extensions of the types in $C$. This version is of course close to the ``finite equivalence theorem" analogy, and follows from Theorem 5.4 as stated. 

\vspace{5mm}
\noindent
Finally in this section we return to definable groups. The relevant uniqueness statement will be something like 4.3(ii).
The domination statement will be roughly the domination of a suitable family of ``generic" types by $G/G^{00}$ with its Haar measure. We start by tying up a few loose ends.
\begin{Lemma}  Let $G$ be a definable group. Suppose there is a global type $p$ of $G$ with $Stab_{l}(p) = G^{00}$. Then there is a global left $G^{00}$-invariant measure $\mu$ on $G$ which lifts (extends) Haar measure on $G/G^{00}$
\end{Lemma}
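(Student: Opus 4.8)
The plan is to average the translates of $p$ over $G/G^{00}$ against Haar measure. Write $\pi\colon G\to C:=G/G^{00}$ for the canonical surjective homomorphism onto the compact (logic-topology) group $C$, and let $h$ be the normalized Haar measure on $C$. First I would reduce to the case where $p$ concentrates on $G^{00}$: realize $p$ by an element $b$ in a bigger monster $\bar M^{*}\succ\bar M$, let $c_{b}\in C$ be the class of $b$ (using that $G/G^{00}$ is canonically identified across monster models), pick $g_{*}\in G(\bar M)$ with $\pi(g_{*})=c_{b}$ — possible since $\pi$ is onto on $\bar M$-points — and replace $p$ by $g_{*}^{-1}\cdot p$. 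Since $G^{00}$ is normal, $\mathrm{Stab}_{l}(g_{*}^{-1}\cdot p)=g_{*}^{-1}G^{00}g_{*}=G^{00}$, and $g_{*}^{-1}b$ realizes the new type and lies in $G^{00}(\bar M^{*})$, so that type contains every $\bar M$-definable superset of $G^{00}$. So assume $p\vdash G^{00}(x)$. Then for $c\in C$ the translate $p_{c}:=g\cdot p$ (for any $g$ with $\pi(g)=c$) is well defined precisely because $\mathrm{Stab}_{l}(p)=G^{00}$, and $p_{c}$ concentrates on the coset $\pi^{-1}(c)$.

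For a definable $D\subseteq G$ over $\bar M$ put $E_{D}=\{c\in C:D\in p_{c}\}$ and define $\mu(D)=h(E_{D})$. Granting that $E_{D}$ is Haar-measurable, $\mu$ is finitely additive on the clopen (i.e.\ definable-set) algebra of $S_{x}(\bar M)$ — if $D=D_{1}\sqcup D_{2}$ then, each $p_{c}$ being a complete type, $E_{D}=E_{D_{1}}\sqcup E_{D_{2}}$ — and $\mu(G)=1$, so $\mu$ extends to a Keisler measure. Left $G^{00}$-invariance is immediate: for $g_{0}\in G^{00}(\bar M)$ one has $g_{0}D\in p_{c}$ iff $D\in p_{\pi(g_{0})^{-1}c}=p_{c}$, so $E_{g_{0}D}=E_{D}$. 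Finally $\mu$ lifts Haar: for closed $B\subseteq C$ and any definable $D\supseteq\pi^{-1}(B)$ we get $B\subseteq E_{D}$, since $p_{c}$ concentrates on $\pi^{-1}(c)\subseteq D$ for $c\in B$; hence $\mu(\pi^{-1}(B))=\inf_{D}\mu(D)\ge h(B)$. Conversely, given $\varepsilon>0$ pick open $V\supseteq B$ with $h(V)<h(B)+\varepsilon$, separate the disjoint type-definable sets $\pi^{-1}(B)$ and $\pi^{-1}(C\setminus V)$ by disjoint definable $D\supseteq\pi^{-1}(B)$, $D'\supseteq\pi^{-1}(C\setminus V)$, and observe $\pi(D)\subseteq V$ and $E_{D}\subseteq\pi(D)$, so $\mu(D)\le h(V)<h(B)+\varepsilon$. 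Thus $B\mapsto\mu(\pi^{-1}(B))$ and $h$ are regular Borel measures on $C$ agreeing on closed sets, so they coincide.

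The one point that needs real work is the measurability of $E_{D}$, equivalently that $\pi^{-1}(E_{D})=\{g\in G(\bar M):D\in g\cdot p\}$, a union of cosets of $G^{00}$, has Borel image in $C$. I expect this to be the main obstacle: $E_{D}$ is only squeezed between the open set $\{c:\pi^{-1}(c)\subseteq D\}$ and the closed set $\{c:\pi^{-1}(c)\cap D\ne\emptyset\}$, and their difference (the set of cosets meeting both $D$ and its complement) need not be Haar-null, so a soft sandwiching argument does not suffice. One way around this is to bypass the integral formula: for each finite Borel partition $\mathcal P=\{B_{1},\dots,B_{n}\}$ of $C$ with $h(B_{i})=1/n$, choose $c_{i}\in B_{i}$ and form the Keisler measure $\mu_{\mathcal P}=\frac1n\sum_{i}p_{c_{i}}$, which is manifestly left $G^{00}$-invariant, and then take a weak${}^{*}$ limit $\mu$ of the $\mu_{\mathcal P}$ along the net of partitions ordered by refinement; invariance passes to the limit, and the lifting property is recovered from the bound $\frac1n|\{i:c_{i}\in B\}|\le\mu_{\mathcal P}(\pi^{-1}(B))\le\frac1n|\{i:c_{i}\in\pi(D)\}|$ for definable $D\supseteq\pi^{-1}(B)$, plus equidistribution of the $c_{i}$ in the limit. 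Alternatively one argues directly that $c\mapsto p_{c}$ is Borel — this is where $NIP$, through Borel definability of the relevant types, would enter — and then the first, cleaner, construction goes through verbatim.
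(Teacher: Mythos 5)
Your proposal is essentially sound, but it takes a genuinely different route from the paper, and the difference is instructive. You try to assign to each definable $D$ the value $h(\{c:D\in p_c\})$, which forces you to confront the measurability of $E_D$; you rightly flag this as the real obstacle, and it is your fallback construction — finite averages $\frac1n\sum_i p_{c_i}$ followed by a weak$^{*}$ limit along refining partitions — that actually carries the proof. Even there the phrase ``equidistribution of the $c_i$ in the limit'' needs to be implemented via the refinement order rather than assumed: for a fixed closed $B\subseteq G/G^{00}$, pass to partitions refining $\{B,\,B^{c}\}$ (or $\{K,K^{c}\}$ for compact $K$ inside an open $V\supseteq B$), so that eventually $\sum_{B_i\subseteq B}h(B_i)=h(B)$, and then use upper semicontinuity of closed-set measures under weak$^{*}$ limits (or a definable envelope $D$ with $\pi^{-1}(B)\subseteq D$ and $\pi^{-1}(V)\subseteq D$) to get $\mu(\pi^{-1}(B))\geq h(B)$; a one-sided inequality on all closed sets already forces $\pi_{*}\mu=h$ by regularity, and left $G^{00}$-invariance passes to the limit since each $p_{c_i}$ has stabilizer $G^{00}$. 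The paper avoids the measurability question altogether: it views $h$ as a Keisler measure on the fragment $F$ of preimages of closed subsets of $G/G^{00}$, notes that every translate $cp$ omits every set $X\triangle gX$ ($g\in G^{00}$, $X$ definable), extends $h$ (as in the claim inside the proof of Theorem 5.4, pushing forward along $c\mapsto cp$ restricted to the enlarged fragment) to a measure on the fragment generated by $F$ and these sets which gives them all measure zero, and then invokes the extension property (Lemma 1.3(ii)) to pass to a global Keisler measure; any such extension is automatically left $G^{00}$-invariant and lifts $h$, so no canonical value of $\mu(D)$ for arbitrary $D$ is ever needed. Your weak$^{*}$ limit buys the same non-canonical extension by compactness of the space of measures, at the cost of the bookkeeping above. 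One caveat: your suggested ``cleaner'' repair via NIP and Borel definability of $c\mapsto p_c$ is not justified as stated, since the hypothesis only provides some global $p$ with $\mathrm{Stab}_l(p)=G^{00}$, and such a $p$ need not be invariant over (or nonforking over) any small model, so Borel definability is not available.
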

\begin{proof}  Let $h$ be Haar measure on $G/G^{00}$ which we can think of as a Keisler measure on a suitable fragment $F$ (in fact the fragment consisting of the preimages of closed sets under $\pi:G\to G/G^{00}$). 
Let $p(x)$ be as given by the assumptions. We may assume that $p$ concentrates on $G^{00}$. Note that $Stab(ap) = G^{00}$ for every translate $ap$ of $p$. In particular for $c\in G/G^{00}$ there is a unique translate of $p$ by some $a$ in the coset $c$, so we just write it $cp$. Note that for each definable subset $X$ of $G$, and $g\in G^{00}$ we have
that $X\triangle gX \notin cp$ for all $c$. It follows as in the proof of 5.4 that $h$ extends to a Keisler measure 
$\mu$ over the fragment generated by $F$ and all definable sets $X\triangle gX$  ($g\in G^{00}$), such that $\mu(X\triangle gX) = 0$ for all such $X,g$, and so to a global Keisler measure which is $G^{00}$-invariant. 
\end{proof}

\begin{Lemma} Suppose $\mu$ is a global Keisler measure on $G$ which is (left) $G^{00}$-invariant. Then $\mu(X\triangle gX) = 0$ for all definable $X$ and $g\in G^{00}$. In particular for all $\mu$-weakly random global $p$, 
$Stab_{l}(p) = G^{00}$.
\end{Lemma}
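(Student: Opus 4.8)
The statement has two parts. The first is that if $\mu$ is a global Keisler measure on $G$ which is left $G^{00}$-invariant, then $\mu(X \triangle gX) = 0$ for every definable $X$ and every $g \in G^{00}$. The second deduces that every $\mu$-weakly random global type $p$ has $\mathrm{Stab}_l(p) = G^{00}$.

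For the first part, the key point is that $G^{00}$ is a \emph{bounded-index} type-definable subgroup, so for a fixed definable $X$ the family $\{gX : g \in G^{00}\}$ consists of boundedly many sets up to equality, and in fact up to the measure-zero ideal we want to show it is a single class. Fix a definable $X$ and $g \in G^{00}$. Since $g \in G^{00}$ and $G^{00}$ is the intersection of a small directed family of definable sets, and more to the point $G^{00}$ is contained in every definable bounded-index subgroup, the element $g$ is a product of finitely many elements each lying in $\mathrm{Stab}_\mu$ (the stabilizer of $\mu$ as a set, i.e.\ $\{h : \mu(Y \triangle hY) = 0 \text{ for all definable } Y\}$)—PROVIDED we can show $\mathrm{Stab}_\mu \supseteq G^{00}$. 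So the real content is: $G^{00} \subseteq \mathrm{Stab}_\mu$. Here I would argue that $\mathrm{Stab}_\mu$ is a subgroup of $G$ containing $G^{00}$ by invariance hypothesis—wait, that is precisely what we are trying to prove, so this is circular. Let me instead use the standard fact that $\mathrm{Stab}_\mu := \{g \in G : \mu(gX \triangle X) = 0 \text{ for all definable } X\}$ is a type-definable subgroup of bounded index: it is an intersection over definable $X$ and $\epsilon > 0$ of the sets $\{g : \mu(gX \triangle X) < \epsilon\}$, each of which is... not obviously definable. The cleaner route: $\mu$ being $G^{00}$-invariant means literally that for all $g \in G^{00}$ and all definable $Y$, $\mu(gY) = \mu(Y)$; I want the stronger $\mu(gY \triangle Y) = 0$. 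I would get this from the \emph{definability-free} observation that the left action of $G^{00}$ on the Boolean algebra of definable sets modulo $\mu$-null sets is by measure-preserving automorphisms of a \emph{finite-dimensional} structure, since by NIP the algebra $\mathcal{B}_\phi$ generated by $\{gY : g \in G\}$ for a fixed $Y = \phi(x)$ is, modulo $\mu$-null, "small" (cf.\ the boundedness of formulas mod measure zero invoked in the introduction): the $G^{00}$-orbit of the class $[\phi]$ in $L^1(\mu)$ is a bounded-index quotient of $G^{00}$, hence (as $G^{00}$ has no proper bounded-index subgroup, being $= (G^{00})^{00}$) a single point, which is exactly $\mu(g\phi \triangle \phi) = 0$ for all $g \in G^{00}$.

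For the second part, let $p$ be $\mu$-weakly random, i.e.\ every formula in $p$ has positive $\mu$-measure. First, $G^{00} \subseteq \mathrm{Stab}_l(p)$: if $g \in G^{00}$ and $\phi(x) \in p$, then $\phi(x) \notin g^{-1}p$ would force $\neg\phi(x) \in g^{-1}p$, i.e.\ $g\neg\phi \in p$ hmm—rather, $g \cdot p = p$ iff for all $\phi \in p$ we have $g\phi \in p$; using $\mu(g^{-1}\phi \triangle \phi) = 0$ from part one, if $\phi \in p$ then $\mu(\phi) > 0$ so $\mu(g^{-1}\phi) > 0$, and since $\phi \setminus g^{-1}\phi$ is null while $\mu(\phi) > 0$ we get $\mu(\phi \cap g^{-1}\phi) > 0$, hence $\phi \cap g^{-1}\phi$ is "weakly $\mu$-large"; one then checks $g^{-1}\phi$ cannot be excluded from $p$ without $p$ containing the null-and-disjoint set $\neg g^{-1}\phi \cap \phi$—but weak randomness only prevents null formulas \emph{being in} $p$, and $\neg g^{-1}\phi \cap \phi$ being null means $\phi \to g^{-1}\phi$ holds up to null, so any formula in $p$ of positive measure implying $\phi$ also "essentially implies" $g^{-1}\phi$; to make this rigorous I pick $p$ as coming from an ultrafilter on the algebra modulo null sets, as in the proof of Remark 4.2, so that $\phi \sim g^{-1}\phi$ forces $g^{-1}\phi \in p$ outright, giving $G^{00} \subseteq \mathrm{Stab}_l(p)$. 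Conversely $\mathrm{Stab}_l(p)$ is a subgroup of $G$; it is type-definable of bounded index (standard), hence contains $G^{00}$ and, for the reverse inclusion, one notes $\mathrm{Stab}_l(p) \subseteq G^{00}$ because $\mathrm{Stab}_l(p)$ is itself bounded-index type-definable so $\supseteq G^{00}$—for equality I instead observe $p$ concentrates on a single coset of $G^{00}$ (as $G/G^{00}$ is the logic topology quotient and $p$ is complete), WLOG the coset of $G^{00}$, and then any $g \in \mathrm{Stab}_l(p)$ fixes $p$ hence fixes the coset $G^{00}$ setwise, i.e.\ $g \in G^{00}$.

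\textbf{Main obstacle.} The genuinely delicate point is the first part: upgrading "$\mu$ is $G^{00}$-invariant" (equality of measures under translation) to "$\mu(gX \triangle X) = 0$" (translation acts trivially mod null). The natural tool is that, by NIP, for each formula $\phi(x,y)$ the Boolean algebra generated by $\{\phi(x,b) : b\}$ is bounded modulo $\mu$-null sets (the fact highlighted in the introduction), so the $G$-action on classes of $\phi(gx)$-translates factors through a bounded quotient of $G$; restricting to $G^{00}$ and using $G^{00} = (G^{00})^{00}$ (no proper relatively-definable—indeed no proper bounded-index—subgroup) collapses the orbit to a point. I should double-check that "bounded modulo null" genuinely yields that the stabilizer of the class $[\phi]$ in $G^{00}$ has bounded index in $G^{00}$; this is where the NIP input is essential and where a careless argument would be circular, so I would write this step out in full rather than gesture at it.
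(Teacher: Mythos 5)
Your handling of the second assertion is fine and in fact simpler than you make it: given the first assertion, if $\phi\in p$ and $g\phi\notin p$ then $\phi\wedge\neg g\phi\in p$ is a $\mu$-null formula, contradicting weak randomness, so $G^{00}\subseteq \mathrm{Stab}_l(p)$; and $\mathrm{Stab}_l(p)\subseteq G^{00}$ is the easy coset argument (any global type concentrates on a single coset of $G^{00}$, which a stabilizing element must fix, and $G^{00}$ is normal). The genuine gap is in the first part, at precisely the step you flagged. Keisler's total-boundedness does give that the $G$-orbit of the class $[\phi]$ in $L^1(\mu)$ is bounded, hence the stabilizer $H=\{g\in G:\mu(g\phi\triangle\phi)=0\}$ has bounded index. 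But you then conclude $G^{00}\subseteq H$ from ``$G^{00}=(G^{00})^{00}$ has no proper bounded-index subgroup''. That is not true: $G^{00}$ is only the smallest \emph{type-definable} subgroup of bounded index, and it can have proper bounded-index subgroups (for instance $G^{000}$ when that is proper, or non-invariant ones produced by choice). To make your collapse-the-orbit step work you would need $H$ to be type-definable, which requires $\mu$ to be definable, or at least invariant over a small model, which requires $\mu$ to be invariant over a small model --- and even then you would only get $G^{000}\subseteq H$, with $G^{000}=G^{00}$ a further non-trivial fact. The lemma, however, assumes nothing about $\mu$ beyond left $G^{00}$-invariance: no definability, no small-model invariance. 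This is exactly why the paper remarks that ``there does not seem to be such a straightforward proof'' of this statement.

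The paper's proof runs in the opposite direction and manufactures the missing definability. It proves $\mathrm{Stab}_l(p)=G^{00}$ for every $\mu$-weakly random global type $p$ directly, and the statement $\mu(X\triangle gX)=0$ then follows (a positive-measure formula lies in some weakly random type). To do this it passes to a larger monster model ${\bar M}'$, where $\mu$ becomes a measure over the now-small model ${\bar M}$, and extends it, as in 5.8 of \cite{NIPII}, to a \emph{definable} left $G^{00}$-invariant global measure $\mu'$ over ${\bar M}'$. Definability gives a small $M_0'$ over which no translate $g\mu'$ forks; hence any $\mu'$-weakly random extension $p'$ of $p$ is left $f$-generic, so $\mathrm{Stab}_l(p')=G^{00}({\bar M}')$ by Proposition 5.6(i) of \cite{NIPII}, and this pulls back to $\mathrm{Stab}_l(p)=G^{00}$. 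Some such device for replacing the arbitrary $G^{00}$-invariant $\mu$ by a definable $G^{00}$-invariant measure is what your sketch is missing, and without it the key step of your first part does not go through.
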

\begin{proof}
This is a kind of group version of the fact that if a global Keisler measure is $bdd(A)$-invariant (does not fork over $bdd(A)$) then $\mu(X\triangle X') = 0$ for any $bdd(A)$-conjugate $X'$ of $X$. The proof of the latter was easy, but there does not seem to be such a straightforward proof of the new lemma.
 We have to prove that $Stab_{l}(p) = G^{00}$ for each $\mu$-weakly random global type $p$. 
 Passing to a bigger monster model ${\bar M'}$, and arguing as in 5.8 of \cite{NIPII}, $\mu$ has an extension to a definable left $G^{00}$-invariant measure $\mu'$ over ${\bar M'}$. But then clearly there is a small $M_{0}'$ such that for all $g\in G({\bar M'})$, $g\mu'$ does not fork over $M_{0}'$. Now our $\mu$-weakly random type $p$ of $\mu$ extends to a $\mu'$-weakly random type $p'$. By what we have just seen, $p'$ is left $f$-generic (every left translate does not fork over a fixed $M_{0}'$). By Proposition 5.6(i) of \cite{NIPII}, $Stab_{l}(p') = G^{00}({\bar M'})$. It follows that $Stab_{l}(p) = G^{00}$. 
\end{proof}

\begin{Proposition} Let $G$ be a definable group. Then the following are equivalent:
\newline
(i) There is a unique left $G^{00}$-invariant global Keisler measure of $G$ lifting Haar measure on $G/G^{00}$,
\newline
(ii) Let $\cal P$ be the family of global complete types of $G$ such that $Stab_{l}(p) = G^{00}$. Let $\pi$ be the 
canonical surjective map from ${\cal P}$ to $G/G^{00}$, and $h$ Haar measure on $G/G^{00}$. Then ${\cal P}$ is nonempty and is dominated by $(G/G^{00}, \pi, h)$.
\end{Proposition}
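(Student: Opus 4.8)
The plan is to run the proof in close parallel with that of Theorem 5.4, with the homogeneous space $G/G^{00}$ equipped with its Haar measure $h$ playing the role of $(C,\mu')$, with ${\cal P}$ playing the role of the space of global nonforking extensions, and with ``global left $G^{00}$-invariant Keisler measure on $G$ lifting $h$'' playing the role of ``global nonforking extension''. First I would record three preliminary facts. (a) $\pi\colon{\cal P}\to G/G^{00}$ is a continuous surjection of compact Hausdorff spaces: ${\cal P}$ is closed in $S_{x}(\bar M)$, and since $G^{00}$ is normal in $G$, every left translate $ap$ of a $p\in{\cal P}$ again has $Stab_{l}(ap)=aG^{00}a^{-1}=G^{00}$, so $ap\in{\cal P}$ and $\pi(ap)=\pi(a)\cdot\pi(p)$; hence $\pi$ is onto as soon as ${\cal P}\neq\emptyset$. (b) $G^{00}$ acts trivially on ${\cal P}$ --- indeed on every global type $p$ with $Stab_{l}(p)\supseteq G^{00}$, since then each $g\in G^{00}$ satisfies $gp=p$. (c) Hence a global left $G^{00}$-invariant Keisler measure on $G$ lifting $h$ is exactly a regular Borel probability measure $\nu$ on ${\cal P}$ with $\pi_{*}\nu=h$: by Lemma 5.6 any $G^{00}$-invariant lift of $h$ is concentrated on ${\cal P}$, and conversely any measure supported on $\{p:Stab_{l}(p)\supseteq G^{00}\}$ is automatically $G^{00}$-invariant by (b) and lifts $h$ precisely when its pushforward to $G/G^{00}$ is $h$. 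Using (c) and Lemma 5.5 (which produces such a $\nu$ whenever ${\cal P}\neq\emptyset$), the Proposition reduces to: assuming ${\cal P}\neq\emptyset$, there is a unique regular Borel probability measure on ${\cal P}$ with pushforward $h$ if and only if ${\cal P}$ is dominated by $(G/G^{00},\pi,h)$; and I note that ${\cal P}\neq\emptyset$ is part of (ii), while under (i) it follows from Lemma 5.6 (the posited measure has nonempty support lying in ${\cal P}$), after which Lemma 5.5 supplies the existence half of (i).

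For ``domination $\Rightarrow$ uniqueness'' I would argue as in the forward direction of Theorem 5.4: given two lifts $\mu,\lambda$ on ${\cal P}$ and a definable $X\subseteq G$ (a clopen subset of ${\cal P}$), put $D_{0}=\pi(X)\cap\pi(X^{c})$, which is closed because $\pi$ is a closed map, and $D_{1}=\pi(X)\setminus D_{0}$, $D_{2}=\pi(X^{c})\setminus D_{0}$. These are Borel, partition $G/G^{00}$ by surjectivity of $\pi$, and satisfy $\pi^{-1}(D_{1})\subseteq X$ and $\pi^{-1}(D_{2})\cap X=\emptyset$. Domination gives $h(D_{0})=0$, so $\mu(X)=\mu(\pi^{-1}(D_{1}))=h(D_{1})=\lambda(\pi^{-1}(D_{1}))=\lambda(X)$, and $\mu=\lambda$.

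For ``uniqueness $\Rightarrow$ domination'', suppose domination fails, so there are a definable $X\subseteq G$ and a closed $D\subseteq G/G^{00}$ with $h(D)>0$ such that for every $c\in D$ the fibre $\pi^{-1}(c)\subseteq{\cal P}$ meets both $X$ and $X^{c}$. Writing $\mu$ for the presumed unique lift, I would construct two distinct $G^{00}$-invariant lifts of $h$ by altering the ``part of $\mu$ over $\pi^{-1}(D)$'' in two ways, mimicking the converse half of Theorem 5.4 (and the construction inside Lemma 5.5). Concretely: work in the fragment generated by the partial types cutting out the $G^{00}$-cosets (the fibres of $\pi$), the partial type $\{\neg(Y\triangle gY):Y\ \text{definable},\ g\in G^{00}\}$ expressing $Stab_{l}(\cdot)\supseteq G^{00}$, and the formula $\phi(x)$ defining $X$; for $c\in D$ let $r^{1}_{c}$ be the maximal partial type in this fragment extending ``$x$ lies in the coset $c$'', ``$Stab_{l}\supseteq G^{00}$'' and $\phi(x)$, which is consistent exactly because $\pi^{-1}(c)\cap X\neq\emptyset$. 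Then $c\mapsto r^{1}_{c}$ is a homeomorphism of $D$ onto a closed subspace of the corresponding type space; pushing $h{\restriction}D$ through it and extending to a global Keisler measure gives $\rho_{1}$, supported on $\{Stab_{l}\supseteq G^{00}\}$ (hence $G^{00}$-invariant by (b), hence concentrated on ${\cal P}$ by Lemma 5.6), with pushforward $h{\restriction}D$ to $G/G^{00}$ and $\rho_{1}(X)=h(D)$. Symmetrically, using $\neg\phi(x)$, one obtains $\rho_{2}$ with $\rho_{2}(X)=0$. Setting $\nu_{i}=\big(\mu{\restriction}\pi^{-1}((G/G^{00})\setminus D)\big)+\rho_{i}$, one gets two $G^{00}$-invariant lifts of $h$ with $\nu_{1}(X)-\nu_{2}(X)=h(D)>0$, contradicting (i).

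The step I expect to be the main obstacle is precisely this last construction, i.e.\ making the fragment-and-homeomorphism argument rigorous: that for each $c\in D$ the partial type defining $r^{1}_{c}$ is consistent --- here ``$\pi^{-1}(c)$ meets $X$'' must be read inside ${\cal P}$, as the existence of a global type over the coset $c$ with $Stab_{l}=G^{00}$ containing $\phi$, which is exactly what the failure of domination provides; that $c\mapsto r^{1}_{c}$ is continuous with closed image, so that the pushforward of $h{\restriction}D$ is a genuine regular Borel measure; and that $\rho_{i}$ is $G^{00}$-invariant, for which it suffices that its support lie in $\{Stab_{l}\supseteq G^{00}\}$, the sharpening to ${\cal P}$ then being automatic from Lemma 5.6. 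All of this is the delicate point already settled in the proof of Theorem 5.4 (and, in non-localized form, in the proof of Lemma 5.5), so the work is to transcribe it faithfully to the group setting rather than to find a new idea.
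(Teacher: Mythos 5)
Your proposal is correct and takes essentially the same route as the paper: both directions are transcriptions of Theorem 5.4 to the group setting, with Lemma 5.5 supplying existence, Lemma 5.6 forcing every left $G^{00}$-invariant lift of $h$ to concentrate on ${\cal P}$, and the failure-of-domination case handled by the fragment construction of 5.4/5.5 (the paper builds two full lifts of $h$ and argues via the $\mu$-weakly random types ${\cal P}'$, while you patch over $D$ and compare two arbitrary lifts directly, but these are cosmetic variations). The one detail to adjust is that your fragment should contain the preimages under $\pi$ of all closed subsets of $G/G^{00}$ (as in Lemma 5.5), not merely the individual fibres, so that the arbitrary global extensions $\rho_{i}$ are actually forced to push forward to $h|_{D}$.
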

\begin{proof} (i) implies (ii): This is like LHS implies RHS in 5.4. The nonemptiness of ${\cal P}$ is given by the previous Lemma. Write $C$ for $G/G^{00}$. For $c\in C$, let ${\cal P}_{c}$ be those members of ${\cal P}$ which concentrate on $\pi^{-1}(c)$. Suppose for a contradiction that for some definable subset $X$ of $G$, the (closed) subset $D$ of $C$ consisting of $c$ such that both ${\cal P}_{c}\cap X$ and ${\cal P}_{c}\cap X^{c}$ is non empty, has positive Haar measure. Then, as in proofs of 5.4 and 5.5, $h$ lifts to
two global left $G^{00}$-invariant Keisler measures, one giving $D\cap X$ positive measure (in fact that of $D$) and the other giving it $0$ measure. Contradiction.
\newline
(ii) implies (i). The existence of $\mu$ is given by 5.5. So let us fix global left $G^{00}$-invariant $\mu$ which lifts Haar measure $h$ on $G/G^{00}$. Let ${\cal P'}$ be the set of family of $\mu$-weakly random global types. By Lemma 5.6, ${\cal P'}$ is a nonempty subset of ${\cal P}$ which clearly maps onto $G/G^{00}=C$ under $\pi$. 
So now write $\pi$ for the map ${\cal P'} \to C$. The assumption (ii) implies that also ${\cal P'}$ is dominated by $(C,\pi,h)$. As in the proof of RHS implies LHS of Theorem 5.4, we conclude that for any definable set $X$, $\mu(X) = h(D)$ where $D = \{c\in C:\pi^{-1}(c)\subseteq X\}$. So $\mu$ is determined. 
\end{proof}

\vspace{5mm}
\noindent
By 4.3, the previous proposition applies to any definable group with $fsg$. In fact 
the class ${\cal P}$ of global types with stabilizer $G^{00}$ can clearly be replaced by the subclass ${\cal P}_{gen}$ of
global {\em generic} types $p$ of $G$. (Here generic is in the sense of \cite{NIPII}.)
Hence we have ``generic compact domination" for $fsg$ groups:
\begin{Proposition} Suppose $G$ has $fsg$. Let ${\cal P}_{gen}$ be the space of global generic types of $G$, $\pi:G\to G/G^{00}$ as before and $h$ Haar measure on $G/G^{00}$. Then ${\cal P}_{gen}$ is dominated by $(G/G^{00},\pi,h)$.
\end{Proposition}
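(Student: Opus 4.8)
\emph{Proof proposal.}
The plan is to combine the equivalence in the preceding Proposition with the uniqueness clause of Theorem 4.3; the only additional work is to pass from the family $\cal P$ of all global types of $G$ with left stabilizer $G^{00}$ to its subfamily ${\cal P}_{gen}$ of generic types, as announced in the Remark above.

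First I would invoke Theorem 4.3(ii): since $G$ has $fsg$, there is a \emph{unique} left $G^{00}$-invariant global Keisler measure $\mu$ on $G$ lifting Haar measure $h$ on $G/G^{00}$. By the preceding Proposition, this uniqueness is equivalent to the assertion that $\cal P$ is nonempty and dominated by $(G/G^{00},\pi,h)$. So the desired statement is already known with $\cal P$ in place of ${\cal P}_{gen}$.

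Next I would check the two points needed to replace $\cal P$ by ${\cal P}_{gen}$. For the inclusion ${\cal P}_{gen}\subseteq \cal P$: in an $fsg$ group a formula is generic exactly when it has positive $\mu$-measure, so a global type is generic exactly when it is $\mu$-weakly random, and by Lemma 5.6 every $\mu$-weakly random global type $p$ satisfies $Stab_l(p)=G^{00}$, i.e.\ lies in $\cal P$. For surjectivity of $\pi$ on ${\cal P}_{gen}$: $\mu$-weakly random types exist by compactness, so ${\cal P}_{gen}\neq\emptyset$; and if $p\in{\cal P}_{gen}$ then for $g\in G(\bar M)$ the translate $gp$ is again generic, with image $\pi(gp)=(gG^{00})\cdot\pi(p)$ in $G/G^{00}$, which runs over all of $G/G^{00}$ as $g$ does. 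Thus $\pi({\cal P}_{gen})=G/G^{00}$.

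Finally I would observe that domination simply restricts to this subfamily: given a definable $X\subseteq G$, on the same $h$-conull set of cosets $c$ for which the fibre of $c$ in $\cal P$ is either contained in $X$ or disjoint from $X$, the corresponding dichotomy holds for the fibre of $c$ in ${\cal P}_{gen}$, since ${\cal P}_{gen}\subseteq \cal P$. Hence ${\cal P}_{gen}$ is dominated by $(G/G^{00},\pi,h)$. The step I expect to be the only genuine obstacle — and the one actually using the $fsg$ hypothesis — is the identification of the generic types of $G$ with the $\mu$-weakly random types (equivalently, that $Stab_l(p)=G^{00}$ for every global generic $p$), which combines Lemma 5.6 with the basic theory of generics in $fsg$ groups from \cite{NIPII}.
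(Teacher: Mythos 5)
Your proposal is correct and follows essentially the same route as the paper: invoke Theorem 4.3(ii) to get the unique left $G^{00}$-invariant lift of Haar measure, apply Proposition 5.7 to get domination of ${\cal P}$, and then pass to the subclass ${\cal P}_{gen}$. The paper dismisses this last step with ``clearly''; your verification of it (stabilizers of generic types equal $G^{00}$ so ${\cal P}_{gen}\subseteq{\cal P}$, surjectivity of $\pi$ on ${\cal P}_{gen}$ via translates of a generic type, and the fibrewise dichotomy restricting to smaller fibres) is exactly the intended content and is sound.
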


Finally we point out that under an additional hypothesis on $G$, we can slightly strengthen the domination statement.
Let us fix a definable group $G$, $\pi:G\to G/G^{00}=C$, and definable subset $X$ of $G$. We will 
say that $X$ is {\em left generic in $\pi^{-1}(c)$} if finitely many left translates of $X$ by elements of $G^{00}$ cover $\pi^{-1}(c)$. 

We will be interested in the following hypothesis on an $fsg$ group $G$:
\newline
(H): Let $X\subseteq G$ be definable. Then $X$ is generic in $G$ if and only if for some small model $M$ every left translate $gX$ of $X$ does not divide over $M$.

\vspace{2mm}
\noindent
The left hand side implies the right hand side in any $fsg$ group. We do not know an example of an $fsg$ group $G$ where (H) fails. It is true in any $o$-minimal expansion of $RCF$. 

\begin{Lemma} Suppose that the $fsg$ group $G$ satisfies (H). Let $X$ be a definable subset of $G$, and $c\in C$. The following are equivalent:
\newline
(i) $X$ is generic in $\pi^{-1}(c)$,
\newline
(ii) $X\in p$ for every $p\in {\cal P}_{gen}$ concentrating on $c$,
\newline
(iii) for some definable set $Y$ containing $\pi^{-1}(c)$, $Y\setminus X$ is not generic in $G$.
\end{Lemma}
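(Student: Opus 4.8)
The proof will be a cycle of implications (i)$\Rightarrow$(ii)$\Rightarrow$(iii)$\Rightarrow$(i), using the hypothesis (H) at exactly the two places where one needs to pass between ``generic in $G$'' and ``does not divide/fork'' information. Throughout, $\pi^{-1}(c)$ is a type-definable set, and a useful preliminary observation is that ``$X$ is left generic in $\pi^{-1}(c)$'' is, by compactness, equivalent to: there are $g_1,\ldots,g_k\in G^{00}$ and a definable $Y\supseteq \pi^{-1}(c)$ with $Y\subseteq g_1X\cup\cdots\cup g_kX$. I would record this reformulation first, since it is what makes clause (iii) natural.

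\medskip

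For (i)$\Rightarrow$(ii): suppose finitely many $G^{00}$-translates of $X$ cover $\pi^{-1}(c)$, and let $p\in{\cal P}_{gen}$ concentrate on $c$, i.e. $\pi^{-1}(c)\in p$ and $Stab_l(p)=G^{00}$. Then $p\vdash g_1X\vee\cdots\vee g_kX$, so some $g_iX\in p$; since $g_i\in G^{00}=Stab_l(p)$, we get $g_i^{-1}g_iX=X\in p$. (Here I use only the definition of the stabilizer and that generic types of an $fsg$ group have stabilizer $G^{00}$, which is in \cite{NIPII} and already invoked in 5.7–5.8.)

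\medskip

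For (ii)$\Rightarrow$(iii): assume $X\in p$ for every generic $p$ concentrating on $c$. Consider the partial type $\pi^{-1}(c)\cup\{\neg X\}$ together with the assertion ``generic'' — more precisely, I would argue that if no definable $Y\supseteq\pi^{-1}(c)$ has $Y\setminus X$ non-generic, then (by compactness, genericity being a finitely-approximable/definable-in-the-family condition for $fsg$ groups — use the fact from \cite{NIPII} that for $fsg$ $G$, $X$ generic iff $X\in p$ for some generic $p$, equivalently $X$ is ``wide'') one can build a global generic type $p$ with $\pi^{-1}(c)\cup\{\neg X\}\subseteq p$, contradicting (ii). The cleanest route: the set of generic definable sets is a (co-)ideal, and $fsg$ gives that a definable set is generic iff it lies in some generic global type; so ``$Y\setminus X$ non-generic for every $Y\supseteq\pi^{-1}(c)$'' fails exactly when the partial type $\pi^{-1}(c)\cup\{\neg X\}$ together with ``non-forking-generic'' is consistent, which under (H) produces a generic $p$ omitting $X$ and concentrating on $c$. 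This is the step where I expect the real work: making precise the compactness argument that the complement of a non-generic set can be realized inside a generic type concentrating on a prescribed fibre, and this is exactly where hypothesis (H) (equating generic with ``every translate non-dividing'') is needed to convert the type-definable constraint $\pi^{-1}(c)$ into a divisibility statement over a small model.

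\medskip

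For (iii)$\Rightarrow$(i): suppose $Y\supseteq\pi^{-1}(c)$ is definable with $Y\setminus X$ non-generic in $G$. Since $G$ has $fsg$ and satisfies (H), $Y\setminus X$ non-generic means (negating (H)) that for every small model $M$ some left translate $g(Y\setminus X)$ divides over $M$; equivalently $Y\setminus X$ is ``small''. Now $Y$ itself: if $Y$ is non-generic we are done trivially (then $\pi^{-1}(c)\subseteq Y$ is covered by one translate of any generic set, but more simply $X\supseteq Y\setminus(Y\setminus X)$, and I would instead argue directly). The clean argument: $\pi^{-1}(c)\subseteq Y = (Y\cap X)\cup(Y\setminus X)$; since $Y\setminus X$ is non-generic and $G$ has $fsg$, the type-definable $\pi^{-1}(c)$ cannot meet the ``generic part'' of the complement of $X$ cofinally — more precisely, using left $G^{00}$-invariance of the family $\{\pi^{-1}(c)\}$ and that a non-generic set, being covered by no finitely many $G^{00}$-translates, must have all its $G^{00}$-translates avoiding some fixed generic, one shows $\pi^{-1}(c)\subseteq g_1X\cup\cdots\cup g_kX$ for suitable $g_i\in G^{00}$. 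The key input is the $fsg$ fact that the non-generic definable sets form an ideal and generics are abundant, plus (H) to identify that ideal with the dividing ideal. I would carry this out by contradiction: if $X$ is not generic in $\pi^{-1}(c)$, then by the compactness reformulation and saturation there is a generic $p\in{\cal P}_{gen}$ concentrating on $c$ with $\neg X\in p$; but $Y\in p$ (as $\pi^{-1}(c)\in p$), so $Y\setminus X\in p$, making $Y\setminus X$ generic — contradiction. This last line is short and is the cleanest of the three implications; the substantive step remains (ii)$\Rightarrow$(iii).
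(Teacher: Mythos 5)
Your (i)$\Rightarrow$(ii) is exactly the paper's argument, and your sketch of (ii)$\Rightarrow$(iii) -- compactness applied to the partial type ``$x\in\pi^{-1}(c)$, $x\notin X$, $\neg\psi$ for every nongeneric $\psi$'', using that the nongeneric definable sets form an ideal -- is also the paper's argument; note that this step needs no appeal to (H), contrary to your expectation that it is where the real work lies. The genuine gap is in (iii)$\Rightarrow$(i), which is the only place where (H) is actually used, and there your argument is circular. You assume that if $X$ is not generic in $\pi^{-1}(c)$ then there is a generic $p\in{\cal P}_{gen}$ concentrating on $c$ with $\neg X\in p$. Compactness and saturation only give \emph{some} complete global type on the fibre containing $\neg gX$ for all $g\in G^{00}$; nothing forces it to be generic. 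In fact the existence of a generic such $p$ is, by the very compactness/ideal argument you use for (ii)$\Rightarrow$(iii), equivalent to the failure of (iii) for all $Y$ -- i.e. to precisely what you are trying to derive -- so the ``clean'' contradiction proves nothing, and the intermediate gesture about all $G^{00}$-translates of a nongeneric set ``avoiding some fixed generic'' is not an argument. Were this step correct as stated, (H) would be superfluous for the whole lemma, whereas the paper treats (H) as a genuine extra hypothesis (not known to hold in every $fsg$ group).

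For comparison, the paper's proof of (iii)$\Rightarrow$(i) runs as follows: take $c$ to be the identity without loss, put $Z=Y\setminus X$, and let $M$ be a model over which $X$ and $Y$ are defined. Since $Z$ is nongeneric, (H) (in contrapositive) yields a left translate of $Z$ which divides over $M$, hence an $M$-indiscernible sequence $(a_i)$ of elements of $G$ with $\bigcap_{i=1,\dots,n}a_iZ=\emptyset$ for some $n$. Members of an $M$-indiscernible sequence lie in a single coset of $G^{00}$, so the elements $g_i=a_1^{-1}a_i$ lie in $G^{00}$ and still satisfy $\bigcap_{i=1,\dots,n}g_iZ=\emptyset$. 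Since each $g_iY\supseteq G^{00}$, every point of $G^{00}$ avoids some $g_iZ=g_iY\setminus g_iX$ while lying in $g_iY$, hence lies in $g_iX$; thus $g_1X\cup\dots\cup g_nX\supseteq G^{00}$ and $X$ is generic in $\pi^{-1}(c)$. This dividing/indiscernible-sequence step is what converts non-genericity of $Y\setminus X$ into the covering statement, and it is exactly what your proposal is missing.
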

\begin{proof} Without loss of generality let $c$ be the identity of $G/G^{00}$. 
\newline
(i) implies (ii) holds without even assuming (H) as the stabilizer of any generic type is $G^{00}$. (See \cite{NIPI} or \cite{NIPII}.)
\newline
(ii) implies (iii) also holds without assuming (H): By (ii), $``x\in G^{00}" \cup ``x\notin X" \cup \{\neg \psi:\psi$ over ${\bar M},\psi$ nongeneric\} is inconsistent, so by compactness, for some definable $Y$ containing $G^{00}$, $Y\setminus X$ is nongeneric (we use here that the set of nongenerics is an ideal).
\newline
(iii) implies (i): Let $Z = Y\setminus X$. Let $M$ be a model over which $X$ and $Y$ are defined. By (H) there is some $M$-indiscernible sequence
$(a_{i}:i<\omega)$ of elements of $G$ such that $\cap_{i=1,..,n}a_{i}Z = \emptyset$ for some $n$. Let $g_{i} = a_{1}^{-1}a_{i}$ for $i=1,..,n$. So
$\cap_{i=1,..,n}g_{i}Z = \emptyset$. All elements of $a_{i}$ are in the same coset of $G^{00}$, hence $g_{i}\in G^{00}$ for $i=1,..,n$. As each $g_{i}Y$ contains $G^{00}$ it follows that $g_{1}X \cup... \cup g_{n}X \supseteq G^{00}$.

\end{proof}

\begin{Corollary} Suppose that $G$ is a group with $fsg$ which satisfies (H), and $\pi:G\to G/G^{00}$.  
Then for any definable subset $X$ of $G$, for almost all $c\in G/G^{00}$ in the sense of Haar measure, either $X$ is generic in $\pi^{-1}(c)$ or $\neg X$ is generic in $\pi^{-1}(c)$.
\end{Corollary}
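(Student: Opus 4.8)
The plan is to read this off from the ``generic compact domination'' already established in Proposition 5.10, combined with the fibrewise description of genericity in Lemma 5.11; hypothesis (H) will enter only through the latter.

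First I would recall the content of Proposition 5.10 in the present setting: since $G$ has $fsg$, the space ${\cal P}_{gen}$ of global generic types of $G$ is dominated by $(G/G^{00},\pi,h)$. Unwinding the definition of domination for the clopen subset of ${\cal P}_{gen}$ determined by a definable set $X\subseteq G$, this says that for $h$-almost all $c\in G/G^{00}$ at least one of the following holds: every generic type concentrating on $c$ contains $X$, or every generic type concentrating on $c$ contains $\neg X$. (Here the two alternatives are the usual ``$\pi^{-1}(c)$ is contained in the set cut out by $X$'' and ``$\pi^{-1}(c)$ misses that set'', and the latter, since the relevant types are complete, is the same as ``$\pi^{-1}(c)$ is contained in the set cut out by $\neg X$''.)

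Next I would apply Lemma 5.11, once to $X$ and once to $\neg X$: since $\neg X$ is again a definable subset of $G$, with complement $X$, the lemma applies to it without change, and this is the one step that uses hypothesis (H). The equivalence of (i) and (ii) in that lemma turns ``every generic type concentrating on $c$ contains $X$'' into ``$X$ is generic in $\pi^{-1}(c)$'', and likewise ``every generic type concentrating on $c$ contains $\neg X$'' into ``$\neg X$ is generic in $\pi^{-1}(c)$''. Feeding these two translations into the dichotomy of the previous paragraph gives, for $h$-almost all $c$, that $X$ is generic in $\pi^{-1}(c)$ or $\neg X$ is generic in $\pi^{-1}(c)$, which is exactly the assertion. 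The only point needing any care, that the exceptional set of $c$'s is measurable, is already part of the domination statement of Proposition 5.10, so I do not anticipate a real obstacle: the corollary is essentially a dictionary translation of Proposition 5.10 through Lemma 5.11.
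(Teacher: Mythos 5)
Your proposal is correct and is exactly the argument the paper intends (its proof of this corollary is simply ``Clear''): unwind the domination statement of Proposition 5.8 for the clopen subset of $\mathcal{P}_{gen}$ cut out by $X$, then translate the two alternatives for a.e.\ $c$ through the equivalence (i)$\Leftrightarrow$(ii) of Lemma 5.9 applied to $X$ and to $\neg X$ respectively, which is where hypothesis (H) enters.
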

\begin{proof} Clear.
\end{proof}

\section{Borel measures over standard models}
In this section we give a rich source of smooth measures in the case of theories of $o$-minimal expansions of $\R$, as well as $Th(\Q_{p})$. If $M_{0}$ is the standard model, $V\subseteq M_{0}^{n}$ is definable, and $\mu^{*}$ is a  Borel probability measure on the topological space $V$, then by restricting $\mu^{*}$ to definable sets, we have a Keisler measure which we  call $\mu$, over $M_{0}$. We will show that any such $\mu$ is smooth: has a unique extension to a Keisler measure $\mu'$ over a saturated model. It follows in particular that $\mu'$ will be ``definable" (\cite{NIPI}), from which one can easily obtain ``approximate definability" of $\mu^{*}$ in the sense of Karpinski and Macintyre \cite{Karpinski-Macintyre}, thereby considerably generalizing their results on approximate definability of the real and $p$-adic Haar measures
on unit discs. 

For now, $T$ is an arbitrary complete theory. It is convenient to formally weaken the notion of a Keisler measure by allowing values in $[0,r]$ for some $r$, but of course maintaining finite additivity.  Sometimes we may say that the Keisler measure $\mu$ is ON the definable set $X$ if $\mu(X^{c})= 0$. 

\begin{Definition} Let $\mu_{x}$ be a Keisler measure over a model $M$. We will say that $\mu$ is {\em countably additive} over $M$,
if whenever $X$ is definable over $M$, $Y_{i}$ are definable over $M$ for $i<\omega$ and pairwise disjoint and 
$X(M)$ is the union of the $Y_{i}(M)$, then $\mu(X) = \sum_{i<\omega}\mu(Y_{i})$. 
\end{Definition} 

\begin{Remark} (i) Of course the definition depends on $M$. When $M$ is $\omega$-saturated, any Keisler measure over $M$ is countably additive, because if $X(M)$ is the union of the $Y_{i}(M)$ then by compactness it will be a finite subunion. 
\newline
(ii) If, as above, $M_{0}$ is a structure whose underlying set is a topological space $X$ say, and such that all definable subsets of the universe are Borel, THEN any Borel measure $\mu^{*}$ on $X$ such that $\mu^{*}(X) \neq\infty$ induces a countably additive Keisler measure over $M_{0}$ (on $X$), by restricting to definable sets. 
\end{Remark}

\begin{Theorem} Let $M_{0}$ be either an $o$-minimal expansion of $(\R,+,\cdot)$, or the structure $(\Q_{p},+,\cdot)$. Let $V$ be a definable set in $M_{0}$, and $\mu$ a countably additive Keisler measure on $V$ over $M_{0}$. THEN $\mu$ is smooth. That is, $\mu$ has a unique extension to a global Keisler measure on $V$.
\end{Theorem}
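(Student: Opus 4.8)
The plan is to reduce to a ``sandwiching'' criterion and then feed in cell decomposition together with countable additivity.

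Two reductions cost nothing. First, by the Carath\'eodory--Hahn extension theorem a Keisler measure $\mu$ on $V$ over $M_{0}$ which is countably additive over $M_{0}$ is precisely the restriction to definable sets of a finite Borel measure $\mu^{*}$ on $V(M_{0})$ (and $\mu^{*}$ is automatically Radon, since $V(M_{0})$, being a Borel subset of $\R^{n}$ resp. $\Q_{p}^{n}$, is Polish); so, as in Remark 6.2, I may work with $\mu^{*}$ directly. Second, $\mu$ is smooth over $M_{0}$ as soon as for every $\phi(x,y)\in L$, every $b\in{\bar M}$ and every $\epsilon>0$ there exist $M_{0}$-definable $D'\subseteq\phi(x,b)({\bar M})\subseteq D$ with $\mu(D)-\mu(D')<\epsilon$: any two global extensions of $\mu$ then agree on $\phi(x,b)$ to within $\epsilon$, and every ${\bar M}$-definable set is of the form $\phi(x,b)({\bar M})$. (This is the easy half of Keisler's criterion underlying Lemma 2.3, cf. \cite{Keisler1}; only this half is needed here.)

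Next I would strip off the geometry. By uniform cell decomposition --- the $o$-minimal cell decomposition theorem in the real case, Denef's $p$-adic cell decomposition (or Scowcroft--van den Dries) in the $p$-adic case --- there is an $M_{0}$-definable partition of $y$-space over each piece of which $\phi(x,y)$ is a union of at most $N$ definable families of cells; adding finitely many sandwiches, it suffices to sandwich a single cell $C=\phi(x,b)({\bar M})$ over $M_{0}b$. Using inner regularity of the finite measure $\mu^{*}$ (pick a compact $K\subseteq V(M_{0})$ with $\mu^{*}(V\setminus K)<\epsilon$) together with countable additivity --- in the $p$-adic case rescaling $x$ into $\Z_{p}^{n}$; in the real case restricting $x$ to a box $[-R,R]^{n}$ and, by the monotonicity theorem, replacing any unbounded coordinate of $b$ by its ``value at $\pm\infty$'', which is again $M_{0}$-definable --- I reduce to the situation where both $x$ and $b$ range over a region on which the standard part map $\mathrm{st}$ is defined. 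Put $b_{0}=\mathrm{st}(b)$.

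The heart is the construction of the sandwich. The cell $C$ is cut out by finitely many $M_{0}$-definable functions evaluated at $b$; I would build $D'_{k}$ and $D_{k}$ by replacing each bounding function $f_{i}(x',b)$ with a slightly perturbed $M_{0}$-definable function (pushed inward for $D'_{k}$ and outward for $D_{k}$, by a rational amount tending to $0$), and by resolving the open/closed status of each face of $C$ according to $\mathrm{tp}(b/M_{0})$ --- i.e. according to whether the relevant coordinate of $b$ lies exactly at, or infinitesimally above, or infinitesimally below, the corresponding standard level. One arranges $D'_{k}\subseteq C\subseteq D_{k}$, with $(D_{k})$ decreasing and $(D'_{k})$ increasing. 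The crucial claim is then that $\bigcap_{k}\big(D_{k}(M_{0})\setminus D'_{k}(M_{0})\big)$ is a $\mu^{*}$-null Borel subset of $M_{0}^{n}$: with the faces resolved correctly this set is contained in a lower-dimensional ``frontier along which $C$ straddles infinitesimally'', and after putting the finitely many $\mu^{*}$-atoms of mass $\geq\epsilon$ (which sit at standard points, hence form an $M_{0}$-definable finite set) on the correct side, the atomless part carries no mass by a dimension/induction argument on the cell structure. Granting this, countable additivity of $\mu^{*}$ gives $\mu(D_{k})-\mu(D'_{k})=\mu^{*}\big(D_{k}(M_{0})\setminus D'_{k}(M_{0})\big)\to 0$, so for large $k$ we have the required $\epsilon$-sandwich; and since a smooth global measure is automatically definable, the approximate definability of $\mu^{*}$ in the sense of Karpinski--Macintyre \cite{Karpinski-Macintyre} follows at once.

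The main obstacle is exactly the claim that $\bigcap_{k}(D_{k}\setminus D'_{k})$ is $\mu^{*}$-null --- equivalently, that $\phi(x,b)({\bar M})$ can be squeezed between $M_{0}$-definable sets whose traces on $M_{0}^{n}$ converge in $\mu^{*}$-measure to the trace $T_{b}=\{a\in M_{0}^{n}:\ \models\phi(a,b)\}$ (which one checks is Borel). This is where one genuinely uses the fine structure of $o$-minimal expansions of $\R$ and of $\Q_{p}$ --- cell decomposition, dimension theory, and the good behaviour of $\mathrm{st}$; by contrast the reduction steps and the use of countable additivity to make the error vanish are comparatively soft.
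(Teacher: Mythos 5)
Your overall architecture---reduce to a sandwich criterion, use the standard part map, induct on dimension---is the same as the paper's, and the first two reduction steps are fine. But there is a genuine gap at exactly the point you yourself flag as the obstacle, and the way you sketch filling it is actually false.

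You claim that $\bigcap_{k}\big(D_{k}(M_{0})\setminus D'_{k}(M_{0})\big)$, a lower-dimensional $M_{0}$-definable set, is $\mu^{*}$-null once the atoms of mass $\geq\epsilon$ are sorted onto the right sides, ``by a dimension/induction argument on the cell structure.'' This cannot be right as stated: $\mu^{*}$ is an arbitrary finite Borel measure, and a lower-dimensional definable set can carry positive, even full, atomless $\mu^{*}$-mass (e.g.\ take $\mu^{*}$ to be arc-length measure on a definable curve in $\R^{2}$, or Haar measure on a proper definable subgroup of $\Z_p^n$). No amount of separating out atoms rescues this; the bulk of the measure can sit on the frontier. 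So your sandwich $D'_k\subseteq C\subseteq D_k$ need not converge in $\mu$-measure, and the argument collapses.

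What the paper does instead---and what your induction would have to do if carried out honestly---is not to show the lower-dimensional frontier $D_2$ is $\mu$-null, but to show that the restriction $\mu|_{D_2}$ is itself a countably additive Keisler measure on a lower-dimensional definable set, hence \emph{smooth by the induction hypothesis}; this determines $\mu'(D\cap D_2)$ without any nullity claim. On the complement, the paper's Claims 3 and 4 show that outside any small $M_0$-definable neighborhood of $D_2$ the set $D$ coincides with the $M_0$-definable $D_1=\mathrm{st}^{-1}(\mathrm{int}\,\mathrm{st}(D))$, and countable additivity lets one shrink the neighborhood, determining $\mu'(D\setminus D_2)$. Translating this back into your sandwich language: you should obtain the $\epsilon$-sandwich for $D\cap D_2$ from the inductive smoothness of $\mu|_{D_2}$, and the $\epsilon$-sandwich for $D\setminus D_2$ from the coincidence with $D_1$ off a small neighborhood of $D_2$---not from the frontier having measure zero. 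As written, the crucial claim is both unproven and (in its ``the atomless part carries no mass'' form) incorrect, so the proposal does not yet constitute a proof.
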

\begin{proof} We will distinguish here between the definable set $V$ (as a functor say) and the set $V(M_{0})$ of $M_{0}$-points.  The proof is by induction on the $o$-minimal/$p$-adic dimension of  $V$ (or $V(M_{0})$) which we take to be $n$. Clearly it suffices to partition $V$ into $M_{0}$-definable sets $V_{1},..,V_{k}$ and prove the proposition for $\mu_{i} = \mu|V_{i}$ for each $i$ (where we stipulate that $\mu_{i}$ is $0$ outside 
$V_{i}$). So by cell-decomposition we may assume that $V\subseteq I^{n}$ where $I$ is the closed unit interval $[0,1]$ in the $o$-minimal case, and the valuation ring in the $p$-adic case. So in fact there is no harm in assuming that $V = I^{n}$. 

Let us fix an extension $\mu'$ of $\mu$ to a global Keisler measure. And let $D$ be a definable (over ${\bar M}$) subset of $V$. We aim to show that $\mu'(D)$ is determined, namely can be computed in terms of $\mu$. 

Recall that we have the standard part map $st$ from $I^{n}({\bar M})$ to $I^{n}(M_{0})$, namely from $V({\bar M})$ to $V(M_{0})$. In both the $o$-minimal and $p$-adic cases all types over the standard model are definable (\cite{Marker-Steinhorn}, \cite{Delon}). As explained in \cite{OP} for example, this implies that for any definable in ${\bar M}$ subset $X$ of $I^{n}$, $st(X)$ is a definable set in the structure $M_{0}$. In particular $st(D)$, $st(D^{c})$, and the intersection $st(D)\cap st(D^{c})$ are definable sets in the structure $M_{0}$. Hence we can write $V(M_{0})$ as the disjoint union of $Y(M_{0})$, 
$D_{0}(M_{0})$, and $D_{1}(M_{0})$, where as the notation suggests $Y,D_{0}, D_{1}$ are definable over $M_{0}$, $Y(M_{0}) = st(D)\cap st(D^{c})$, $D_{1}(M_{0}) = st(D)\setminus Y$ and $D_{0}(M_{0}) = st(D^{c})\setminus Y$. Note also that $D_{0}$, $D_{1}$ are open $M_{0}$-definable subsets of $V$, and of course $V$ is the disjoint union of $Y$, $D_{0}$ and $D_{1}$. 

\vspace{2mm}
\noindent
{\em Claim 1.}  The $M_{0}$-definable subset $Y\cup (cl(D_{0})\cap cl(D_{1}))$ of $V$ has dimension $< n$.
\newline
{\em Proof of Claim 1.} Otherwise it contains an open $M_{0}$ definable set $U$ say. But then either $D\cap U$ or $D^{c}\cap U$ contains an open $M_{0}$-definable subset (of $V = I^{n}$). (In the $p$-adic case this is Theorem 2.2(ii) of \cite{OP}. It is well-known in the $o$-minimal case too, but formally follows from 10.3 of \cite{NIPI} for example.) But this is clearly impossble. For if, for example,  $W$ is an open $M_{0}$-definable set contained in $D$, then for all $w\in W(M_{0})$, $st^{-1}(w)\subseteq D$, so for each $w\in W(M_{0})$, $w\notin Y(M_{0})$, and 
$w\notin cl(D_{0})(M_{0})$. The claim is proved.

\vspace{2mm}
\noindent
Let $D_{2}$ be the (closed, $M_{0}$-definable) set $Y\cup (cl(D_{0})\cap cl(D_{1}))$. Let $\mu_{2}$ be $\mu|D_{2}$. Namely $\mu_{2}$ agrees with $\mu$ on $M_{0}$-definable subsets of $D_{2}$ and is $0$ on the complement of $D_{2}$. Likewise define $\mu'_{2}$ to be equal to $\mu'$ on definable subsets of $D_{2}$ and $0$ on the complement of $D_{2}$. Then as $\mu_{2}$ is still countably additive, we see, by induction hypothesis and Claim 1, that $\mu'_{2}$ is the unique global extension of $\mu_{2}$. In particular we have:
\newline
{\em Claim 2.}  $\mu'_{2}(D) = \mu'(D\cap D_{2})$ is determined. 

\vspace{2mm}
\noindent
{\em Claim 3.} Let $D_{3}(M_{0})$ be an open $M_{0}$-definable neighbourhood of the closed set $D_{2}(M_{0})$. Then $D\setminus D_{3} = D_{1}\setminus D_{3}$, hence $\mu'(D\setminus D_{3}) = \mu(D_{1}\setminus D_{3})$. 
\newline
{\em Proof of Claim 3.} Let $a\in V = V({\bar M})$, and suppose $a\notin D_{3}$. So 
\newline
(*) $st(a)\notin D_{2}(M_{0})$. 
\newline
Case (i): $a\in D_{1}$. Then $st(a)\in cl(D_{1})(M_{0})$. By (*), $st(a)\notin Y$, and $st(a)\notin D_{0}$. Hence $st(a)\in D_{1}$ and we conclude that $a\in D$.
\newline
Case (ii): $a\in D_{0}$. As in Case (i) we conclude that $st(a)\in D_{0}$ hence $a\notin D$.
\newline
This proves Claim 3.

\vspace{2mm}
\noindent
{\em Claim 4.}  $\mu'(D\setminus D_{2}) = \mu(D_{1}\setminus D_{2})$.
\newline
{\em Proof.} For small $\delta > 0$ let $D_{\delta}(M_{0})$ be the $\delta$ neighbourhood of $D_{2}$. Then $\cap_{\delta}D_{\delta} = D_{2}$.
So $\mu(D_{\delta}\setminus D_{2}) \to 0$ as $\delta \to 0$, hence also $\mu'(D\cap (D_{\delta}\setminus D_{2}) \to 0$ as $\delta \to 0$. 
It follows, using Claim 3, that $\mu'(D\setminus D_{2}) = lim_{\delta\to 0}\mu'(D\setminus D_{\delta} = lim_{\delta\to 0}\mu(D_{1}\setminus D_{\delta}) = \mu(D_{1}\setminus D_{2})$.

\vspace{2mm}
\noindent
Claims 2 and 4 show that $\mu'(D)$ is determined. 
\end{proof}

\begin{Remark} (i) The key point about countably additive Keisler measures $\mu$ over the standard model is that any global extension $\mu'$ must assign $0$ to definable sets which are ``infinitesimal".
\newline
(ii) The inductive proof of Theorem 6.3  yields the following: for any definable subset $D$ of $I^{n}({\bar M})$, there is a partition of
$I^{n}$ into $M_{0}$-definable cells $V_{1},..,V_{k}$, such that for each $i$, EITHER for all $a\in V_{i}(M_{0})$ $st^{-1}(a)\cap V_{i}\subseteq D$, OR
for all $a\in V_{i}(M_{0})$ $st^{-1}(a)\cap V_{i}\cap D = \emptyset$. 

\end{Remark}

\end{document}